\newtheorem{thm}{Theorem}[section]
\newtheorem{cor}[thm]{Corollary}
\newtheorem{lem}[thm]{Lemma}
\newtheorem{prop}[thm]{Proposition}
\newtheorem{defn}[thm]{Definition}
\numberwithin{equation}{section}
\numberwithin{Remark}{section}
\begin{document}

\title{Gap phenomena and curvature estimates for Conformally Compact Einstein Manifolds}

\author{Gang Li$^\dag$, Jie Qing$^*$ and Yuguang Shi$^\ddag$}

\begin{abstract} In this paper we first use the result in \cite{CN} to remove the assumption of the $L^2$ boundedness
of Weyl curvature in the gap theorem in \cite{CQY1} and then obtain a gap theorem for a class of conformally compact Einstein
manifolds with very large renormalized volume. We also uses the blow-up method to derive curvature estimates for conformally
compact Einstein manifolds with large renormalized volume. The second part of this paper is on conformally compact Einstein
manifolds with conformal infinities of large Yamabe constants. Based on the idea in \cite{DJ} we manage to give the complete
proof of the relative volume inequality \eqref{volume-com} on conformally compact Einstein manifolds. Therefore we obtain the complete
proof of the rigidity theorem for conformally compact Einstein manifolds in general dimensions with no spin structure assumption
(cf. \cite{Q,DJ}) as well as the new curvature pinch estimates for conformally compact Einstein manifolds with conformal infinities of very large
Yamabe constant. We also derive the curvature estimates for conformally compact Einstein manifolds with conformal infinities of large
Yamabe constant.
\end{abstract}

\renewcommand{\subjclassname}{\textup{2000} Mathematics Subject Classification}
 \subjclass[2010]{Primary 53C25; Secondary 58J05}

\address{Gang Li, Beijing International Center for Mathematical Research, Peking University, Beijing, China}
\email{runxing3@gmail.com}

\address{Jie Qing, Department of Mathematics, UC Santa Cruz, California, USA}
\email{qing@ucsc.edu}

\address{Yuguang Shi, Key Laboratory of Pure and Applied
mathematics, School of Mathematics Science, Peking University,
Beijing, 100871, P.R. China.}
\email{ygshi@math.pku.edu.cn}

\thanks{$^\dag$ Research supported by China Postdoctoral Science Foundation Grant 2014M550540.}

\thanks{$^*$ Research supported by NSF grant DMS-1303543.}

\thanks{$^\ddag$ Research supported by   NSF grant of China 10990013.}

\maketitle


\section{Introduction}

The study of conformally compact Einstein manifolds is fundamental in establishing mathematical
theory of the so-called AdS/CFT correspondence proposed in the theory of quantum gravity in theoretic physics.
It is well understood that there is the rigidity phenomenon for conformally compact Einstein manifolds, that is,
a conformally compact Einstein manifold whose conformal infinity is the conformal round sphere has to be the hyperbolic
space \cite{AD,Q}. On the other hand, in \cite{GL,Bq}, it was shown that for each conformal
sphere that is sufficiently close to the conformal round sphere there exists a conformally compact Einstein metric
on the ball whose conformal infinity is the given conformal sphere. Those conformally compact Einstein metrics
constructed in \cite{GL,Bq} are automatically close to the hyperbolic space in some appropriate way. In an attempt to understand
if those conformally compact Einstein metrics given in \cite{GL,Bq} are the unique ones, in this paper,  we describe some gap phenomena
in terms of renormalized volumes and derive curvature estimates when either the renormalized volume is close to that
of hyperbolic space or the Yamabe constant of the conformal infinity is close to that of conformal round sphere. \\

The gap theorem in this paper for renormalized volumes of conformally compact Einstein 4-manifolds grows out of the gap theorem
in \cite{CQY1} for closed Bach flat 4-manifolds. As a consequence of recent remarkable work of Cheeger and
Naber \cite{CN} we first remove the dependence of the $L^2$ of the Weyl curvature from the gap theorem in \cite{CQY1}
and obtain the following:

\begin{thm}\label{conformal-gap} There exists a positive small number $\epsilon$ such that a closed Bach flat 4-manifold $(M, \ g)$ of positive Yamabe type
has to be conformally equivalent to the round 4-sphere, if
$$
\int_M \sigma_2(A[g]) d V_g \geq (1-\epsilon)16\pi^2,
$$
where $\sigma_2(A[g])$ is the second symmetric function of the eigenvalues of the
Schouten curvature tensor $A[g] = \frac 1{n-2}(\text{Ric} - \frac R{2(n-1)}g)$ of the metric $g$.
\end{thm}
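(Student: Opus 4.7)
The plan is to argue by contradiction and reduce to the gap theorem of \cite{CQY1} by supplying the previously-assumed uniform $L^2$ Weyl bound via the codimension-four regularity of Cheeger-Naber \cite{CN}.

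Suppose $(M_i,g_i)$ is a sequence of closed Bach-flat $4$-manifolds of positive Yamabe type with $\int_{M_i}\sigma_2(A[g_i])\,dV_{g_i}\to 16\pi^2$, none conformal to the round $S^4$. Choose Yamabe representatives of unit volume, still denoted $g_i$. The Chern-Gauss-Bonnet formula in dimension four,
$$8\pi^2\chi(M_i)=\int_{M_i}\sigma_2(A[g_i])\,dV_{g_i}+\tfrac14\int_{M_i}|W[g_i]|^2\,dV_{g_i},$$
together with the identity $\sigma_2(A)=\tfrac{R^2}{24}-\tfrac12|E|^2$, where $E$ denotes the traceless Ricci, forces $\chi(M_i)\geq 2$ and yields uniform bounds on the scalar curvature and on $\int|E|^2$. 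In particular, as soon as $\chi(M_i)$ is shown to be uniformly bounded, $\int|W|^2$ is bounded and the statement reduces at once to the original form of \cite{CQY1}. The codimension-four regularity of \cite{CN}, applied to the Bach-flat system in the Yamabe gauge with the Ricci-type controls above, supplies exactly this: after passing to a subsequence, $(M_i,g_i)$ converges in the pointed Gromov-Hausdorff sense to a Bach-flat orbifold $(M_\infty,g_\infty)$ with at most finitely many isolated orbifold singular points, with uniformly bounded Euler characteristic.

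At each singular point, a standard rescaling extracts a complete noncompact asymptotically locally Euclidean Bach-flat $4$-manifold of zero scalar curvature. The bubble classification and Pohozaev-type analysis of \cite{CQY1} show each such bubble consumes a definite positive portion of $\int\sigma_2$. Since the limit value $16\pi^2=8\pi^2\chi(S^4)$ leaves no room for bubbling, convergence is smooth and $(M_\infty,g_\infty)$ is a smooth closed Bach-flat $4$-manifold of positive Yamabe type saturating $\int\sigma_2=16\pi^2$; the original form of \cite{CQY1}, applied to $(M_\infty,g_\infty)$, then identifies it as conformally round, and a standard conformal perturbation contradicts the choice of the $g_i$.

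The main obstacle, and the only place where \cite{CN} enters in an essential way, is the orbifold compactness step. In \cite{CQY1} it rested on an $\epsilon$-regularity result fed by an a priori uniform $L^2$ Weyl bound; here that bound has to be produced after the fact. The essential content of the new argument is therefore the verification that the Bach-flat system in the Yamabe gauge, with its lower-order Ricci-type coefficients controlled as above, falls within the scope of the codimension-four theorem of \cite{CN}. Once this is established, both the $L^2$ Weyl bound and the orbifold limit follow, after which the bubble scheme of \cite{CQY1} goes through with only cosmetic changes.
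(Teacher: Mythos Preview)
Your proposal has a genuine gap at the point where you invoke \cite{CN}. The theorem of Cheeger and Naber you are appealing to (Theorem \ref{Weylint4} in the paper) requires a \emph{pointwise} two-sided Ricci bound $|\text{Ric}|\leq C$ together with a volume lower bound. For your Bach-flat Yamabe metrics $g_i$ you only have constant scalar curvature and an integral bound $\int_{M_i}|E|^2\,dV\to 0$; you have no pointwise control on $\text{Ric}$ whatsoever. So the sentence ``the Bach-flat system in the Yamabe gauge, with its lower-order Ricci-type coefficients controlled as above, falls within the scope of the codimension-four theorem of \cite{CN}'' is exactly the unproved assertion on which your whole argument rests, and it is not a cosmetic matter: extending \cite{CN} from bounded Ricci to $L^2$-bounded Ricci is a substantially harder theorem, not known at the time, and certainly not what \cite{CN} provides. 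Without it you cannot get the orbifold compactness, the Euler-characteristic bound, or the $L^2$ Weyl bound, and the reduction to \cite{CQY1} collapses.

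The paper avoids this difficulty entirely by applying \cite{CN} only where a pointwise Ricci bound is available for free: on the \emph{Ricci-flat} blow-up limits. The route is to first use \eqref{sigma-2} to pin the Yamabe constant near $Y(\mathbb{S}^4,[g_{\mathbb{S}}])$; this (via Hebey) gives uniform Euclidean volume growth along any rescaled sequence. If the curvature of the Yamabe metrics blows up, one rescales at the maximum; the limit is complete, Ricci-flat, and inherits the Yamabe lower bound. Now \cite{CN} applies legitimately (since $|\text{Ric}|=0$) to feed the $L^2$ curvature bound into the end analysis of \cite{And1,BKN}, and the Yamabe lower bound $> 2^{-1/2}Y(\mathbb{R}^4,g_E)$ forces the asymptotic cone to be $\mathbb{R}^4$ itself (Lemma \ref{trivialgroup}), contradicting $|Rm|(p_\infty)=1$. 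If there is no blow-up, smooth convergence to the round sphere together with Lemma \ref{from-cqy} finishes the argument. The point is that \cite{CN} is used only on Ricci-flat manifolds, never on the original Bach-flat sequence.
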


Our proof of Theorem \ref{conformal-gap} is slightly different from that in \cite{CQY1}. Our approach instead replies on the control of the Yamabe constant.
The Yamabe constant is defined as follows:

\begin{equation}\label{yamabe}
Y(M, \ g) = \inf \{ \frac {\int_M (\frac {4(n-1)}{n-2}|\nabla u|^2 + R[g]u^2)dv[g]}{(\int_M u^{\frac {2n}{n-2}}dv[g])^\frac {n-2}n}: u \in C_c^\infty(M)\backslash\{0\} \},
\end{equation}
where $C^\infty_c(M)$ is the space of smooth functions with compact support in $M$.
It is easily seen that $Y(M, \ g)$ is a conformal invariant when $(M, \ g)$ is compact with no boundary, in which case we denote it by $Y(M, \ [g])$ instead. On the other hand,
on a complete non-compact manifold, $Y(M, \ g)$ is rather a local Yamabe constant. For instance
$$
Y(\mathbb{R}^n, \ g_E) = Y(\mathbb{H}^n, \ g_{\mathbb{H}}) = Y(\mathbb{S}^n, \ [g_{\mathbb{S}}]).
$$
The Yamabe constant is as convenient to use as the Euclidean volume growth bound when rescaling metrics.  Particularly we observe the following fact:

\begin{lem}\label{lemma1.2} Suppose that $(X^4, \ g)$ is a complete non-compact Ricci flat manifold. And suppose that
$$
Y(M, g) > 2^{-\frac 12} Y(\mathbb{R}^4, g_{E}).
$$
Then $(X^4, \ g)$ is isometric to the Euclidean 4-space $(\mathbb{R}^4, g_E)$.
\end{lem}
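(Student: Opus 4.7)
The plan is to leverage the Yamabe lower bound to control the asymptotic volume ratio $\theta := \lim_{r \to \infty} \operatorname{Vol}(B_r(p))/(\omega_4 r^4)$ of $(X^4, g)$, then use the $\epsilon$-regularity of Cheeger--Naber \cite{CN} to identify the tangent cone at infinity as Euclidean $\mathbb{R}^4$, and finally to invoke Bishop--Gromov volume rigidity. Since $\operatorname{Ric}_g \equiv 0$, the limit $\theta$ exists and is independent of $p$ by the Bishop--Gromov monotonicity of $\operatorname{Vol}(B_r(p))/(\omega_4 r^4)$. Moreover $R[g] \equiv 0$, so the definition \eqref{yamabe} reduces to the sharp Sobolev inequality $(\int_X u^4 dv_g)^{1/2} \leq (6/Y(X,g)) \int_X |\nabla u|^2 dv_g$ for every $u \in C_c^\infty(X)$. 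A standard argument --- testing this inequality against cut-offs of large geodesic balls and iterating Bishop--Gromov in the style of Ledoux/Akutagawa --- yields the dimension-$4$ volume lower bound $\theta \geq (Y(X,g)/Y(\mathbb{R}^4, g_E))^{2}$, which together with the hypothesis produces $\theta > 1/2$.

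For the second step, pick $\lambda_i \to \infty$ and pass to a pointed Gromov--Hausdorff subsequential limit $(X_\infty, d_\infty, p_\infty)$ of $(X, \lambda_i^{-2} g, p)$. Because $\theta > 0$ and $\operatorname{Ric}_g \equiv 0$, Cheeger--Colding's volume-cone-implies-metric-cone theorem identifies $X_\infty$ with a metric cone $C(Y^3)$ of asymptotic volume ratio $\theta$. Applying Cheeger--Naber $\epsilon$-regularity \cite{CN} to this Ricci flat cone, its singular set has Hausdorff codimension at least $4$, hence is a discrete set in the $4$-dimensional ambient space, so $Y^3$ is a smooth $3$-manifold. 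The Ricci flatness of $C(Y)$ on the regular locus forces $\operatorname{Ric}_Y = 2 h_Y$, which in dimension $3$ is equivalent to constant sectional curvature $1$; thus $Y = S^3/\Gamma$ for some finite $\Gamma \subset O(4)$ acting freely on $S^3$, and $C(Y) = \mathbb{R}^4/\Gamma$ has asymptotic volume ratio $1/|\Gamma|$. Matching with $\theta > 1/2$ forces $|\Gamma|=1$, so $\theta = 1$. The Bishop--Gromov monotone quantity $\operatorname{Vol}(B_r(p))/(\omega_4 r^4)$ then equals $1$ at both $r \to 0^+$ and $r \to \infty$, hence is identically $1$, and the rigidity clause produces the desired isometry $(X^4,g) \cong (\mathbb{R}^4, g_E)$.

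The main technical obstacle I anticipate is the first step. A bubble concentrated at a smooth point only shows $Y(X,g) \leq Y(\mathbb{R}^4, g_E)$ and gives no quantitative information on $\theta$, so the inequality $\theta \geq (Y(X,g)/Y(\mathbb{R}^4, g_E))^{2}$ must come from global Sobolev/volume arguments on complete manifolds of non-negative Ricci curvature, with the constant in the right factorial form. One may alternatively bypass this estimate entirely by combining the scale invariance $Y(X,g) = Y(X, \lambda^{-2} g)$ with upper semicontinuity of the Yamabe constant under pointed Gromov--Hausdorff convergence to the tangent cone $C(Y) = \mathbb{R}^4/\Gamma$; that would directly yield $Y(X,g) \leq Y(\mathbb{R}^4/\Gamma) = |\Gamma|^{-1/2} Y(\mathbb{R}^4, g_E)$, which again forces $|\Gamma| = 1$ and closes the argument.
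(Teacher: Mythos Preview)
Your alternative route in the final paragraph is precisely the paper's proof (given as the general-dimension Lemma \ref{trivialgroup}): obtain \emph{some} Euclidean volume growth from the Yamabe lower bound (the paper cites Lemma 3.2 in \cite{Hebey}), identify the tangent cone at infinity as a flat cone $\mathbb{R}^4/\Gamma$ via the Cheeger--Naber $L^2$ bound combined with the ALE end analysis of \cite{And1,BKN}, pass the Yamabe constant to the blow-down limit using scale invariance and a cut-off argument near the vertex, and then test the cone's Yamabe functional with a lifted Aubin bubble to obtain $Y(\mathbb{R}^4/\Gamma) \leq |\Gamma|^{-1/2} Y(\mathbb{R}^4, g_E)$, forcing $|\Gamma| = 1$. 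The paper finishes via the ``simply connected at infinity'' clause of \cite{And1,BKN}; your Bishop--Gromov volume rigidity finishes equally well once $\theta = 1$.

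Your primary route is a genuinely different organization: it tries to extract the sharp asymptotic volume ratio bound $\theta \geq (Y(X,g)/Y(\mathbb{R}^4, g_E))^2$ directly from the global Sobolev inequality, and only then match $\theta = 1/|\Gamma|$ on the cone. If that inequality is available with the exact constant, the argument closes without ever transporting the Yamabe constant through the Cheeger--Gromov convergence. The trade-off is exactly the one you flag: the paper's route needs only a crude volume lower bound plus an elementary Yamabe computation on the already-identified flat quotient, whereas your route front-loads a sharp Sobolev-to-volume estimate (of Ledoux/Xia type), which is correct but less elementary to pin down.
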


Lemma \ref{lemma1.2} is the special case of Lemma \ref{trivialgroup} when $n=4$. The proof of this fact is rather straightforward using the end analysis based on \cite{And1,BKN} and the simple consequence Lemma \ref{improved} from
the new remarkable work in \cite{CN} in dimension 4. \\

To see the control of Yamabe constant from the integral of $\sigma_2(A)$, with the Yamabe metric $g_Y$ on a compact manifold, we have
\begin{equation}\label{sigma-2}
\int_M (\sigma_2(A)dv)[g_Y] = \int_M ((\frac 1{24}R^2 - \frac 12 |\overset{\circ}{\text{Ric}}|^2)dv)[g_Y] \leq \frac 1{24}(Y(M, [g]))^2.
\end{equation}
It is known from \cite{Graham} that, on a conformally compact Einstein 4-manifold $(X^4, g^+)$ with the conformal infinity $(\partial X, [\hat g])$,
\begin{equation}\label{volume-exp}
\text{Vol}(\{x > \epsilon\}) = \frac 13 \text{Vol}(\partial X, \hat g) \epsilon^{-3} - \frac 18 \int_{\partial X} (Rdv)[\hat g] \epsilon^{-1} + V(X^4, \ g^+) + o(1)
\end{equation}
where $R[\hat g]$ is the scalar curvature of the metric $\hat g$ and $x$ is the geodesic defining function associated with a representative $\hat g$ of
the conformal infinity. It turns out that $V(X^4, \ g^+)$ in \eqref{volume-exp} is independent of representatives and is called the renormalized volume \cite{HS,Graham}. The expansion \eqref{volume-exp} uses the expansion of the Einstein metric $g^+$ given in \cite{FG}
\begin{equation}\label{metric-exp}
g^+ = x^{-2}(dx^2 + g_x) = x^{-2} ( dx^2 + \hat g + g^{(2)}x^2 + g^{(3)}x^3 + o(x^3))
\end{equation}
where $g^{(2)}$ is a curvature tensor of $\hat g$ while $g^{(3)}$ is non-local.
Moreover, in \cite{And2} (see also \cite{CQY2}), it is shown that
\begin{equation}\label{Gauss-Bonnet}
8\pi^2 \chi(X) = \frac{1}{4} \int_X (|W|^2 d v)[g^+] + 6 V(X^4, \ g^+)
\end{equation}
for a conformally compact Einstein 4-manifold $(X^4, \ g^+)$. Consequently, one knows
\begin{equation}\label{v-sigma-2}
\int_{X^4} (\sigma_2(A)dv)[\bar g] = 6 V(X^4, \ g^+),
\end{equation}
where $\bar g = x^2 g^+$ is a conformal compacitification. Therefore, given a conformally compact Einstein manifold, one may control the Yamabe constant
for compactifications from the renormalized volume. Following the idea in \cite{CQY3}, one may consider the doubling of the compactified manifold from
a conformally compact Einstein manifold and obtain the corresponding gap theorem.

\begin{thm}\label{CGT} There exists a small positive number $\epsilon$ such that a conformally compact Einstein 4-manifold $(X^4, \ g^+)$
with the conformal infinity of positive Yamabe type has to be isometric to the hyperbolic space, if its renormalized volume satisfies
\begin{equation}\label{volume-gap}
V(X^4, \ g^+) \geq (1 - \epsilon)\frac {4\pi^2}3 = (1-\epsilon)V(\mathbb{H}^4, \ g_{\mathbb{H}})
\end{equation}
and the non-local term $g^{(3)}$ in \eqref{metric-exp} for $g^+$ vanishes.
\end{thm}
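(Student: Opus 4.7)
The plan is to reduce Theorem \ref{CGT} to the closed Bach flat gap result (Theorem \ref{conformal-gap}) via the doubling construction of \cite{CQY3}. The key bridge is the identity \eqref{v-sigma-2}, which converts a lower bound on renormalized volume into a lower bound on the $\sigma_2$-integral of a compactification.

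First I would take the geodesic compactification $\bar g = x^2 g^+$ associated to a representative $\hat g$ of the conformal infinity. Since $g^+$ is Einstein and Bach flatness is conformally invariant in dimension four, $\bar g$ is Bach flat on the interior of $X^4$. By the Fefferman-Graham expansion \eqref{metric-exp}, the absence of a linear term in $x$ shows $\partial X$ is totally geodesic in $\bar g$, and the hypothesis $g^{(3)} \equiv 0$ kills the next odd term; this is precisely what is needed so that the doubled metric $Dg$ on $DX := X \cup_{\partial X} X$ is regular enough for the fourth-order Bach equation to extend across the seam. Applying \eqref{v-sigma-2} to both copies then gives
\begin{equation*}
\int_{DX} \sigma_2(A[Dg]) \, dv[Dg] \,=\, 2 \int_{X^4} \sigma_2(A[\bar g])\,dv[\bar g] \,=\, 12\, V(X^4, g^+) \,\geq\, (1-\epsilon)\cdot 16\pi^2.
\end{equation*}

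Next I would check that $(DX, [Dg])$ is of positive Yamabe type. Choosing $\hat g$ with positive scalar curvature (possible since $\partial X$ is of positive Yamabe type) and applying the boundary conformal transformation laws, $R[\bar g] > 0$ in a collar of $\partial X$, which by reflection symmetrizes to a positive-scalar-curvature collar around the seam of $DX$. Together with \eqref{sigma-2}, which forces $Y(DX,[Dg])^2 > 0$ once the $\sigma_2$-integral is positive, this rules out the possibility $Y(DX,[Dg]) \leq 0$. Theorem \ref{conformal-gap} then identifies $(DX, [Dg])$ with the conformal round $\mathbb{S}^4$. The involution swapping the two copies of $X$ becomes conformal on $\mathbb{S}^4$, and its fixed set is a totally umbilical round $\mathbb{S}^3 \subset \mathbb{S}^4$; hence $(\partial X, [\hat g])$ is the conformal round $3$-sphere. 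The rigidity theorem of \cite{AD,Q} then pins down $(X^4, g^+)$ as the hyperbolic space $(\mathbb{H}^4, g_\mathbb{H})$.

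The principal obstacle is making rigorous that Bach flatness of $Dg$ passes across the seam $\partial X$. Vanishing of $g^{(3)}$ eliminates odd terms through order three in the Fefferman-Graham expansion, so $Dg$ is $C^3$ at the seam; but the Bach tensor is fourth order, so one still needs a careful distributional or symmetry argument (or a slight upgrade of the Fefferman-Graham regularity) to rule out any concentrated contribution supported on $\partial X$. This is where the sharpness of the $g^{(3)}=0$ assumption is felt. A secondary technical point is the positive Yamabe type of $(DX, [Dg])$ in full generality, which the collar positivity combined with the sign determination from \eqref{sigma-2} should handle.
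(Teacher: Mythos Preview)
Your approach is essentially the paper's: double the compactification, compute the $\sigma_2$-integral via \eqref{v-sigma-2}, and invoke Theorem \ref{conformal-gap}. Your principal worry about Bach flatness across the seam is dispatched more cleanly than you anticipate: the paper records (from the Fefferman--Graham theory, cf.\ the lemma citing \cite{FG,Graham,CDLS,Mazzeo1}) that once $g^{(3)}=0$ the expansion \eqref{metric-expansion} contains \emph{only even powers} of $x$, so the doubled metric is genuinely smooth and no distributional argument is needed; the positive Yamabe type of the double and the final passage to hyperbolic space (via local conformal flatness of an Einstein metric rather than the rigidity theorem you invoke) are both taken from \cite{CQY2}.
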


Theorem \ref{CGT} indicates that the hyperbolic space is the only ``critical point'' of the renormalized volume among all conformally compact Einstein manifolds that
satisfy \eqref{volume-gap}, since the Euler-Lagrangian equation for the renormalized volume is $g^{(3)} = 0$ by the calculation made in \cite{And2} . On the other hand,
Theorem \ref{CGT} clearly does not hold if one drops the assumption $g^{(3)} = 0$. In fact the conformally compact Einstein metrics constructed in
\cite{GL,Bq} provide plenty of examples of conformally compact Einstein metrics that satisfy \eqref{volume-gap} for arbitrarily small $\epsilon$
and with conformal infinities of positive Yamabe type. Nevertheless, one obtains the following curvature estimate for conformally compact Einstein metrics when
the renormalized volume is large enough.

\begin{thm} \label{curvature-bound} For any $0 < \epsilon < \frac 12$, there is a positive constant $C$ such that, on a conformally compact Einstein 4-manifold $(X, \ g^+)$,
$$
\|\text{Rm}\| [g^+]\leq C,
$$
where $\text{Rm}[g^+]$ is the Riemann curvature tensor of the metric $g^+$, provided that the conformal infinity is of positive Yamabe type and
\eqref{volume-gap} holds.
\end{thm}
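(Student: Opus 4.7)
The plan is a blow-up/contradiction argument that combines Lemma \ref{lemma1.2} with the dimension-four regularity embodied in Lemma \ref{improved}. Suppose no universal bound $C$ exists for the given $\epsilon$. Then there is a sequence $(X_i, g_i^+)$ of conformally compact Einstein 4-manifolds, each with conformal infinity of positive Yamabe type and $V(X_i, g_i^+) \geq (1-\epsilon)\frac{4\pi^2}{3}$, but with $R_i := \sup_{X_i} |\text{Rm}[g_i^+]| \to \infty$. Pick nearly-maximizing points $p_i \in X_i$ and rescale by setting $\tilde g_i := R_i \, g_i^+$. Then $\text{Ric}[\tilde g_i] = -3 R_i^{-1} \tilde g_i \to 0$, while $|\text{Rm}[\tilde g_i]| \leq 1$ on all of $X_i$ and $|\text{Rm}[\tilde g_i]|(p_i) = 1$. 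Since $g^+$ has bounded curvature near the conformal infinity, the $p_i$ may be assumed to lie in a region where $g_i^+$ is comparable to any fixed compactification $\bar g_i = x^2 g_i^+$, so no pathology arises from drift toward $\partial X$.

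The second step is to transfer the renormalized volume hypothesis into a uniform lower bound on the Yamabe constant of the rescaled geometries. Doubling $(\bar X_i, \bar g_i)$ across the conformal infinity produces, as in \cite{CQY3}, a closed Bach flat 4-manifold $(\widetilde{X}_i, \tilde{\bar g}_i)$ on which \eqref{v-sigma-2} gives $\int \sigma_2(A)\,dv \geq (1-\epsilon)\cdot 16\pi^2$; plugged into \eqref{sigma-2} this yields
\[
Y(\widetilde{X}_i, [\tilde{\bar g}_i]) \,\geq\, \sqrt{1-\epsilon}\, Y(\mathbb{S}^4, [g_{\mathbb{S}}]).
\]
Restricting test functions to any relatively compact $\Omega \subset X_i$ and using the conformal invariance of $Y$ in dimension four shows $Y(\Omega, \tilde g_i) \geq \sqrt{1-\epsilon}\, Y(\mathbb{R}^4, g_E)$ uniformly in $i$ and in $\Omega$. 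This yields a uniform Sobolev inequality, hence volume non-collapsing $\mathrm{Vol}_{\tilde g_i}(B_r(p_i)) \geq c\, r^4$, while the Chern--Gauss--Bonnet identity \eqref{Gauss-Bonnet} together with the renormalized volume bound produces a uniform $L^2$ bound on $|W|[g_i^+]$, hence on $|\text{Rm}[\tilde g_i]|$ after accounting for the almost-Ricci-flatness.

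With these bounds Lemma \ref{improved} allows us to extract, along a subsequence, a pointed Cheeger--Gromov limit $(X_\infty, g_\infty, p_\infty)$ that is a complete non-compact smooth Ricci-flat 4-manifold with $|\text{Rm}[g_\infty]|(p_\infty) = 1$. The Yamabe lower bound passes to the limit: $Y(X_\infty, g_\infty) \geq \sqrt{1-\epsilon}\, Y(\mathbb{R}^4, g_E) > 2^{-1/2}\, Y(\mathbb{R}^4, g_E)$ precisely because $\epsilon < 1/2$. Lemma \ref{lemma1.2} then forces $(X_\infty, g_\infty)$ to be isometric to $(\mathbb{R}^4, g_E)$, contradicting $|\text{Rm}|(p_\infty) = 1$.

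The hardest part is the convergence step: promoting the $L^2$ curvature and non-collapsing bounds to a genuine smooth limit rather than an orbifold or singular space. This is exactly where Cheeger--Naber's quantitative stratification (Lemma \ref{improved}) is essential, since the earlier $\epsilon$-regularity theory would leave open the possibility of isolated orbifold singularities in the limit, whose exclusion would itself require a Yamabe-type argument identical in spirit to the one concluding the proof. The remaining steps --- the conformal invariance computation, the Sobolev-to-non-collapsing implication, and the passage to the limit of $Y$ --- are standard once the right-hand side of \eqref{Gauss-Bonnet} is controlled uniformly.
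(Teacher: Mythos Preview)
Your argument is correct and matches the paper's approach: transfer the renormalized-volume lower bound to a Yamabe-constant lower bound $Y(X^4,g^+)\geq (1-\epsilon)^{1/2}Y(\mathbb{H}^4,g_{\mathbb{H}})$ via the doubling and \eqref{sigma-2}--\eqref{v-sigma-2}, then blow up at curvature maxima and eliminate the Ricci-flat limit with Lemma~\ref{lemma1.2}. The paper packages the blow-up step as the $n=4$ case of a general-dimension theorem and reads off Theorem~\ref{curvature-bound} as a corollary, but the content is the same.

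One correction to your final paragraph: the convergence step is \emph{not} where Cheeger--Naber enters. You have already rescaled so that $|\text{Rm}[\tilde g_i]|\leq 1$ pointwise, and together with the non-collapsing from the Sobolev inequality this yields a smooth Cheeger--Gromov limit by standard compactness for Einstein manifolds --- no orbifold singularities can form under a uniform pointwise curvature bound. The $L^2$ Weyl bound from \eqref{Gauss-Bonnet} is therefore not needed here (and the paper does not invoke it in dimension four). Lemma~\ref{improved}, and hence Cheeger--Naber, is used only \emph{inside} Lemma~\ref{lemma1.2}: once you have the complete Ricci-flat limit $(X_\infty,g_\infty)$, it is the analysis of its tangent cone at infinity that requires ruling out a nontrivial quotient $\mathbb{R}^4/\Gamma$, and that is where the $L^2$ curvature finiteness supplied by Theorem~\ref{Weylint4} does the work.
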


Next we focus our attention to those conformally compact Einstein manifolds whose conformal infinities have large Yamabe constants. It is a very original idea in \cite{DJ} to
use the Yamabe constant of the conformal infinity to control the relative volume growth of geodesic balls in conformally compact Einstein manifolds.
We recognize the important contribution of \cite{DJ}, but are compelled to present a complete and correct proof of the following:

\begin{thm}\label{main-dutta} Suppose that $(X^n, \ g^+)$ is an AH manifold of $C^3$ regularity and with the conformal infinity of positive Yamabe type. Let
$p\in X^n$ be a fixed point. Assume
 \begin{equation}\label{alter-einstein}
 \text{Ric}[g^+]\geq -(n-1)g^+ \text{ and } \ R[g^+]+n(n-1)= o(e^{-2t})
 \end{equation}
 for the distance function $t$ from $p$. Then
\begin{equation}\label{volume-com}
(\frac{Y(\partial X, [\hat{g}])}{Y(\mathbb{S}^{n-1}, [g_{\mathbb{S}}])})^{\frac{n-1}{2}}\leq \frac{Vol(\partial B_{g^+}(p, t))}
{Vol(\partial B_{g_{\mathbb{H}}}(0, t)} \leq \frac{Vol(B_{g^+}(p, t))}
{Vol(B_{g_{\mathbb{H}}}(0, t)}\leq 1,
\end{equation}
where $B_{g^+}(p, t)$ and $B_{g_{\mathbb{H}}}(0, t)$ are geodesic balls.
\end{thm}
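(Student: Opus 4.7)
My plan is to establish the three inequalities in \eqref{volume-com} in turn, the first being by far the deepest.

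The middle and rightmost inequalities are the classical Bishop-Gromov volume comparison applied to $(X^n,g^+)$ under $\text{Ric}[g^+]\geq -(n-1)g^+$. Writing $g^+=dt^2+g_t$ in geodesic polar coordinates around $p$, the Riccati inequality $m'+\frac{m^2}{n-1}+\text{Ric}(\partial_t,\partial_t)\leq 0$ for the mean curvature $m(t,\theta)$ of the geodesic sphere yields the pointwise bound $m\leq (n-1)\coth t$. The equivalent statement that $\mathcal{A}^{1/(n-1)}(\cdot,\theta)/\sinh t$ is non-increasing (where $\mathcal{A}(t,\theta)$ is the area element) gives monotonicity of both ratios $\text{Vol}(\partial B_{g^+}(p,t))/\text{Vol}(\partial B_{g_{\mathbb{H}}}(0,t))$ and $\text{Vol}(B_{g^+}(p,t))/\text{Vol}(B_{g_{\mathbb{H}}}(0,t))$, with the former bounding the latter from above by the usual weighted-average argument, and both tending to $1$ as $t\to 0^+$.

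For the leftmost (Yamabe) inequality, set $A(t):=\text{Vol}(\partial B_{g^+}(p,t))$ and $V_\infty:=\lim_{t\to\infty}A(t)e^{-(n-1)t}$ (this exists by the above monotonicity). Since $\text{Vol}(\partial B_{g_{\mathbb{H}}}(0,t))=\omega_{n-1}\sinh^{n-1}t\sim\omega_{n-1}e^{(n-1)t}/2^{n-1}$, Bishop-Gromov monotonicity gives
$$\frac{\text{Vol}(\partial B_{g^+}(p,t))}{\text{Vol}(\partial B_{g_{\mathbb{H}}}(0,t))}\geq \frac{2^{n-1}V_\infty}{\omega_{n-1}},$$
so it suffices to show $2^{n-1}V_\infty/\omega_{n-1}\geq (Y(\partial X,[\hat g])/Y(\mathbb{S}^{n-1},[g_{\mathbb{S}}]))^{(n-1)/2}$. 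The heart of the argument is to combine a scalar-curvature upper bound on the geodesic sphere with the Yamabe inequality on the conformal infinity. Applying the Gauss equation to $\Sigma_t:=\partial B_{g^+}(p,t)$ with induced metric $g_t$, outward normal $\partial_t$, mean curvature $m$, and shape operator $S$ gives
$$R[g_t]=R[g^+]-2\,\text{Ric}[g^+](\partial_t,\partial_t)+m^2-|S|^2.$$
Plugging in $R[g^+]+n(n-1)=o(e^{-2t})$, $\text{Ric}(\partial_t,\partial_t)\geq -(n-1)$, the Bishop bound $m\leq (n-1)\coth t$, and the algebraic inequality $|S|^2\geq m^2/(n-1)$, one obtains
$$\int_{\Sigma_t}R[g_t]\,dv_{g_t}\leq \frac{(n-1)(n-2)}{\sinh^2 t}\,A(t)+o(e^{-2t})A(t).$$
On the other hand, testing the Yamabe quotient on the compact manifold $(\Sigma_t,g_t)$ with the constant function $u\equiv 1$ yields
$$\int_{\Sigma_t}R[g_t]\,dv_{g_t}\geq Y(\Sigma_t,[g_t])\cdot A(t)^{(n-3)/(n-1)}.$$
Since the Yamabe constant is a conformal invariant, and the $C^3$ AH structure (together with the expansion \eqref{metric-exp}) forces the rescaled metrics $e^{-2t}g_t$ to converge to a representative of $[\hat g]$, one has $Y(\Sigma_t,[g_t])\to Y(\partial X,[\hat g])$. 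Combining the two displays, dividing by $A(t)^{(n-3)/(n-1)}$, and sending $t\to\infty$ (using $e^{-2t}A(t)^{2/(n-1)}=(A(t)e^{-(n-1)t})^{2/(n-1)}\to V_\infty^{2/(n-1)}$), we arrive at
$$Y(\partial X,[\hat g])\leq 4(n-1)(n-2)\,V_\infty^{2/(n-1)}.$$
Substituting $Y(\mathbb{S}^{n-1},[g_{\mathbb{S}}])=(n-1)(n-2)\omega_{n-1}^{2/(n-1)}$ and rearranging give precisely the desired lower bound.

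The main obstacle I anticipate is the rigorous justification of the convergence $Y(\Sigma_t,[g_t])\to Y(\partial X,[\hat g])$: the rescaled induced metrics $e^{-2t}g_t$ live on geodesic spheres centered at an interior point $p$ rather than on level sets of a geodesic defining function, so one must propagate the $C^3$ asymptotic structure of $g^+$ through the exponential map from $p$ to control the convergence, and transfer test functions between $\partial X$ and $\Sigma_t$ for large $t$ while keeping the Yamabe quotient under control. Handling the cut locus of $p$ (or arguing it is asymptotically negligible under the curvature hypotheses) is a secondary technical point.
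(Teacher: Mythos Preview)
Your Bishop--Gromov argument for the two rightmost inequalities is fine and matches the paper. The gap is in your derivation of the leftmost inequality, and it is not where you think it is.

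The step that fails is the displayed pointwise scalar curvature bound
\[
\int_{\Sigma_t}R[g_t]\,dv_{g_t}\leq \frac{(n-1)(n-2)}{\sinh^2 t}\,A(t)+o(e^{-2t})A(t).
\]
Tracing your computation, from the Gauss equation and $|S|^2\geq m^2/(n-1)$ you get $R[g_t]\leq -(n-1)(n-2)+\frac{n-2}{n-1}m^2+o(e^{-2t})$, and to reach the display you need $m^2\leq (n-1)^2\coth^2 t$. But Bishop only gives the \emph{upper} bound $m\leq (n-1)\coth t$; there is no a priori \emph{lower} bound on $m$. Along a minimizing geodesic that terminates at a cut point, the smallest principal curvature (hence $m$) can become arbitrarily negative shortly before the cut time, so $m^2-|S|^2$ can be large and positive (take all $\mu_i$ equal and very negative). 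This is exactly the difficulty the paper isolates: equation \eqref{2.3-dutta} (the Dutta--Javaheri estimate $\nabla^2 t(v,v)=1+O(e^{-\beta t})$) is not known without convexity of the geodesic spheres, and the paper's Proposition \ref{no-lower-b} shows a lower bound on $\mu_m$ only along geodesic rays that avoid the cut locus entirely.

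The paper's remedy is substantial and occupies Sections \ref{Sect:normal-cut-point} and \ref{Sect:curvature-estimate}. Rather than working on the (only Lipschitz) geodesic spheres $\Gamma_t$, the paper works on the smooth level sets $\Sigma_r$ of the geodesic defining function and uses the Lipschitz test function $\psi=e^{\frac{n-3}{2}(r-t)}$ in the Yamabe quotient for $(\Sigma_r,\bar g_r)$. The conformal transformation law then produces $\int_{\Sigma_r}R[\tilde g_r]$ plus a boundary term along the cut locus; the fine structure of normal cut loci (Lemma \ref{Ozols}, Lemma \ref{angle}) is used to show this boundary term has the favorable sign. For the total scalar curvature itself, the paper does \emph{not} attempt a pointwise bound: instead it shows (Proposition \ref{vol-upper}) that the set where the mean curvature is below $(n-1)(1-\delta)$ has $\tilde g_r$-volume $\to 0$, and (Proposition \ref{H-very-neg}) that where $H\leq -2(n-1)$ the product $H\mathcal{J}$ is controlled by an exponential with deficit, via a differential inequality for $H\mathcal{J}$ along geodesics combined with a uniform bound $\mu_m(\gamma(t-1))\geq -C$ one unit before the endpoint (Lemma \ref{uniform-lb}).

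So the obstacle you flag---convergence $Y(\Sigma_t,[g_t])\to Y(\partial X,[\hat g])$---is in fact bypassed in the paper by never forming $Y(\Gamma_t,[g_t])$ at all (the $\Gamma_t$ are not smooth), and the cut locus is not a ``secondary technical point'' but the heart of the matter: it is precisely where your pointwise bound breaks down and where both of the paper's new sections are spent.
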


The $C^3$ regularity is used to construct the geodesic defining function $x$ and $C^2$ conformal compactification $\bar g = x^2g^+$, for each given representative $\hat g$,
To start the proof we first need to clear a technical issue.

\begin{lem}\label{lem:cur-decay} Suppose that $(X^n, \ g^+)$ is AH of $C^2$ regularity and that $x$ is a defining function. Assume
\begin{equation}\label{alter-einstein-1}
 \text{Ric}[g^+]\geq -(n-1)g^+ \text{ and } \ R[g^+]+n(n-1)= o(x)
 \end{equation}
Then there is a constant $C_0$ such that
\begin{equation}\label{cur-decay}
|K[g^+]+1|\leq C_0x^2
\end{equation}
for any sectional curvature $K$.
\end{lem}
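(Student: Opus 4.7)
The plan is to pin down the leading Taylor coefficient of $g_x$ using the combination of the scalar-curvature decay hypothesis and the Ricci lower bound, and then to read the claimed quadratic decay of sectional curvatures off from the standard curvature expansions. Fix a representative $\hat g$ of the conformal infinity and work in the associated geodesic collar, so that on a neighbourhood of $\partial X$
$$ g^+ = x^{-2}(dx^2 + g_x), \qquad g_x\big|_{x=0} = \hat g, $$
with $g_x$ a $C^2$ one-parameter family of metrics on $\partial X$. Taylor's theorem then yields
$$ g_x = \hat g + x A + x^2 E(x), \qquad A := \partial_x g_x\big|_{x=0}, $$
with the remainder $\|E(x)\|_{\hat g}$ uniformly bounded near $\partial X$.

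The main step is to show $A = 0$. Applying the conformal change formulas from $\bar g := dx^2 + g_x$ to $g^+ = x^{-2}\bar g$, and using $(\nabla^{\bar g} dx)(E_i, E_j) = \tfrac12 (\partial_x g_x)(E_i, E_j)$ along the level sets, one obtains in the $g^+$-orthonormal frame $\hat e_0 = x\partial_x$, $\hat e_i = x E_i$ (with $\{E_i\}$ a $g_x$-orthonormal frame tangent to the level sets) the expansions
\begin{align*}
(\text{Ric}[g^+] + (n-1)g^+)(\hat e_0, \hat e_0) &= \tfrac{x}{2}\,\text{tr}_{\hat g} A + O(x^2),\\
(\text{Ric}[g^+] + (n-1)g^+)(\hat e_i, \hat e_j) &= \tfrac{(n-2)x}{2}\, A(E_i, E_j) + \tfrac{x}{2}\,(\text{tr}_{\hat g} A)\,\delta_{ij} + O(x^2).
\end{align*}
Tracing yields $R[g^+] + n(n-1) = (n-1)\, x\, \text{tr}_{\hat g} A + O(x^2)$, so the hypothesis $R[g^+] + n(n-1) = o(x)$ forces $\text{tr}_{\hat g} A = 0$. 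Substituting this back into the tangential expansion leaves the leading term $\tfrac{(n-2)x}{2}\, A$; the Ricci lower bound $\text{Ric}[g^+] \ge -(n-1)g^+$ then makes $A \ge 0$ as a symmetric $2$-tensor on $\partial X$. A nonnegative symmetric $2$-tensor with vanishing trace is the zero tensor, hence $A = 0$.

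Once $A = 0$, $\partial_x g_x = O(x)$ uniformly near $\partial X$. Substituting this into the analogous conformal change computation for the full Riemann tensor gives, for any $g^+$-orthonormal $2$-plane,
$$ K[g^+] + 1 = x \cdot \mathcal{L}\big(\partial_x g_x\big|_{x=0}\big) + O(x^2), $$
where $\mathcal{L}$ is a specific linear functional in the entries of $A$. Hence the linear-in-$x$ correction vanishes and $|K[g^+] + 1| \le C_0 x^2$ throughout the collar; the complement is a relatively compact piece of $X$ on which $K+1$ and $1/x^2$ are both bounded, giving the uniform bound on $X$. The main obstacle is the bookkeeping in the conformal change formulas: one has to verify that the linear-in-$x$ coefficient of \emph{every} sectional curvature is a linear expression in the same tensor $A$, so that the algebraic step forcing $A = 0$ really eliminates every $O(x)$ correction, not just the one visible in the scalar trace.
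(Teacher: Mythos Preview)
Your proposal is correct and follows essentially the same route as the paper's proof: set up a geodesic normal form $\bar g = dx^2 + \hat g + xA + O(x^2)$, use the conformal change formulas to see that the scalar hypothesis forces $\text{tr}_{\hat g}A=0$ and the Ricci lower bound forces $A\ge 0$, conclude $A=0$, and then read the $O(x^2)$ sectional curvature decay from the Riemann tensor expansion. The only point the paper handles more carefully is the first step: under the bare $C^2$ hypothesis the existence of the geodesic collar with $g_x\in C^2$ is not automatic, and the paper invokes Lemma~3.1 of \cite{CDLS} to obtain the normal form \eqref{good-cor}; you should cite that result rather than simply asserting the collar exists.
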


It takes substantial arguments to finish the proof of Theorem \ref{main-dutta} based on the idea presented in \cite{DJ}. The first issue is that \eqref{gap-1} (cf. (3.17) in \cite{DJ})
may not be available as claimed in \cite{DJ}, since the distance function $t$ is only Liptschitz in general. We devote Section \ref{Sect:normal-cut-point} to solve this issue by
the careful study of cut loci based on \cite{Ozols, Tanaka}. The second issue is that the estimate \eqref{2.3-dutta} (cf. (2.3) in \cite{DJ}) is not known to hold without assuming
the convexity of the geodesic spheres (cf. \cite{ST, HQS}).  We devote Section \ref{Sect:curvature-estimate} to derive the total scalar curvature estimate \eqref{total-cur-need} without
\eqref{2.3-dutta}. Our argument in Section \ref{Sect:curvature-estimate} uses more delicate analysis of the
Riccati equations on AH manifolds and volume estimates along geodesics where the mean curvature of the geodesic sphere is small.
\\

As argued in \cite{DJ},  Theorem \ref{main-dutta} implies the rigidity of conformally compact Einstein manifolds for any dimension, which are the cases where the conformal infinity is
exactly the round spheres (cf. \cite{M-O,AD,leung,Q,Wang,CH,BMQ,ST,DJ}).

\begin{thm} \label{rigidity} Suppose that $(X^n, \ g^+)$ is AH of $C^3$ regularity and that \eqref{alter-einstein} holds. Then $(X^n, g^+)$ is
isometric to the hyperbolic space $(\mathbb{H}^n, \ g_{\mathbb{H}})$, provided that the conformal infinity
$(\partial X, \ [\hat g])$ is the round conformal sphere.
\end{thm}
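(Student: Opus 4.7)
The plan is to use Theorem \ref{main-dutta} to reduce the rigidity to the equality case of the classical Bishop--Gromov volume comparison. Since the conformal infinity is the round conformal sphere, $Y(\partial X, [\hat g]) = Y(\mathbb{S}^{n-1}, [g_{\mathbb{S}}])$, so the leftmost quantity in \eqref{volume-com} equals $1$ for every $t > 0$. Combined with the upper bound of $1$ on the right-hand side, this forces equality throughout the chain, i.e.
$$
\frac{\text{Vol}(\partial B_{g^+}(p, t))}{\text{Vol}(\partial B_{g_{\mathbb{H}}}(0, t))} = \frac{\text{Vol}(B_{g^+}(p, t))}{\text{Vol}(B_{g_{\mathbb{H}}}(0, t))} = 1
$$
for every $t > 0$.

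Next I would invoke the standard equality case of Bishop--Gromov: on a complete Riemannian manifold with $\text{Ric} \geq -(n-1)g$, if the volume ratio relative to the hyperbolic space model equals $1$ for every radius, then each geodesic ball $B_{g^+}(p, t)$ is isometric to the hyperbolic geodesic ball $B_{g_{\mathbb{H}}}(0, t)$. This in turn forces the cut locus of $p$ to be empty and the exponential map $\exp_p : T_pX \to X$ to be a global isometry when $T_pX$ is equipped with the pulled-back hyperbolic metric. Since $(X^n, g^+)$ is complete (being AH), we conclude that $(X^n, g^+)$ is isometric to $(\mathbb{H}^n, g_{\mathbb{H}})$.

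The main obstacle is not in this final step itself but rather in the availability of Theorem \ref{main-dutta}, whose proof carries the technical weight (the careful cut-locus analysis of Section \ref{Sect:normal-cut-point} and the refined scalar curvature estimates of Section \ref{Sect:curvature-estimate}). What must be checked here is only the classical Bishop--Gromov rigidity, which applies without any essential modification once one knows the mean curvature of the geodesic spheres based at $p$ agrees pointwise with the hyperbolic model (this is a consequence of equality in the volume comparison together with the Ricci lower bound, via the usual Riccati argument along radial geodesics). No use of the scalar curvature decay hypothesis is needed beyond its role in Theorem \ref{main-dutta}, and the argument does not require the stronger assumption of $g^+$ being Einstein.
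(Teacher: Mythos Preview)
Your proposal is correct and follows essentially the same approach as the paper: the paper does not give an independent proof of Theorem \ref{rigidity} but simply states that, as argued in \cite{DJ}, it is an immediate consequence of Theorem \ref{main-dutta}, via exactly the Bishop--Gromov equality argument you spell out. Your write-up in fact makes explicit the step the paper leaves to the reader, namely that $Y(\partial X,[\hat g])=Y(\mathbb{S}^{n-1},[g_{\mathbb{S}}])$ forces all ratios in \eqref{volume-com} to equal $1$ and hence rigidity in Bishop--Gromov applies.
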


Notice that, when the conformal infinity is the round sphere, a conformally compact Einstein metric $g^+$ is always smooth according to \cite{CDLS}, provided that
it is at least of $C^2$ regularity.
Therefore Theorem \ref{rigidity} does cover the most general rigidity theorem for conformally compact Einstein manifolds whose conformal infinity is the round sphere in any dimension.
In this paper we also deduce from the relative volume growth estimates \eqref{volume-com} the following interesting curvature pinch estimates.

\begin{thm}\label{EHBoundary} For any $\epsilon >0$, there exists $\delta > 0$, for any conformally compact Einstein manifold $(X^{n}, g^+)$ ($n\geq 4$),
one gets
\begin{equation}\label{close-to-hyper}
|K [g^+] + 1| \leq \epsilon,
\end{equation}
for all sectional curvature $K$ of $g^+$, provided that
$$
Y(\partial X, [\hat{g}]) \geq (1-\delta) Y(S^{n-1}, [g_{\mathbb{S}}]).
$$
Particularly, any conformally compact Einstein manifold with its conformal infinity of Yamabe constant sufficiently close to that of the round sphere is
necessarily negatively curved.
\end{thm}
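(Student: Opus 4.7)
The plan is to argue by contradiction, combining a blow-up/compactness argument with the relative volume inequality \eqref{volume-com} from Theorem \ref{main-dutta} as the crucial non-collapsing input. Suppose the conclusion fails. Then there exist $\epsilon_0>0$, a sequence $(X_i^n,\,g_i^+)$ of conformally compact Einstein manifolds with
\[
Y(\partial X_i, [\hat g_i])\geq \bigl(1-\tfrac{1}{i}\bigr) Y(\mathbb{S}^{n-1}, [g_{\mathbb{S}}]),
\]
and points $p_i\in X_i$ at which some sectional curvature $K_i$ of $g_i^+$ satisfies $|K_i(p_i)+1|>\epsilon_0$. Since the base point in Theorem \ref{main-dutta} is arbitrary, applying \eqref{volume-com} at $p_i$ yields
\[
\lim_{i\to\infty}\frac{\mathrm{Vol}(B_{g_i^+}(p_i,t))}{\mathrm{Vol}(B_{g_{\mathbb{H}}}(0,t))}=1 \qquad \text{for every fixed } t>0,
\]
which prevents collapsing in both of the sub-cases below.

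\emph{Bounded curvature case.} If $|\mathrm{Rm}[g_i^+]|$ stays uniformly bounded on a ball of fixed radius $r_0>0$ about $p_i$, then the non-collapsing together with the curvature bound supply a uniform positive injectivity-radius bound via Cheeger's lemma, so a subsequence converges in the pointed $C^{1,\alpha}$ Cheeger--Gromov sense to an Einstein limit $(X_\infty,g_\infty,p_\infty)$ with $\mathrm{Ric}[g_\infty]=-(n-1)g_\infty$ on $B_{g_\infty}(p_\infty,r_0)$. In the limit the volume ratio equals $1$ for every $t<r_0$, so the equality case of Bishop--Gromov forces $B_{g_\infty}(p_\infty,r_0)$ to be isometric to the hyperbolic ball of radius $r_0$. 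In particular every sectional curvature at $p_\infty$ is $-1$, contradicting $|K_\infty+1|(p_\infty)\geq\epsilon_0$.

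\emph{Unbounded curvature case.} A Hamilton-type point selection produces $q_i\in X_i$ with $\lambda_i:=|\mathrm{Rm}[g_i^+]|(q_i)\to\infty$ and $|\mathrm{Rm}|\leq 4\lambda_i$ on $B_{g_i^+}(q_i,R_i\lambda_i^{-1/2})$ for some $R_i\to\infty$. Rescale $\tilde g_i:=\lambda_i g_i^+$; then $|\mathrm{Rm}[\tilde g_i]|(q_i)=1$, $\mathrm{Ric}[\tilde g_i]=-(n-1)\lambda_i^{-1}\tilde g_i\to 0$, and the rescaled curvature is uniformly bounded on $\tilde g_i$-balls of radius $R_i\to\infty$. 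Applying \eqref{volume-com} at $q_i$ to the unrescaled metric, rescaling, and using $\mathrm{Vol}(B_{g_{\mathbb{H}}}(0,r))/\mathrm{Vol}(B_{g_E}(0,r))\to 1$ as $r\to 0$, one finds
\[
\mathrm{Vol}_{\tilde g_i}(B(q_i,R))\geq \bigl(1-\tfrac{1}{i}\bigr)^{(n-1)/2}\bigl(\mathrm{Vol}(B_{g_E}(0,R))+o(1)\bigr)
\]
for every fixed $R$. Passing to a pointed Cheeger--Gromov limit gives a smooth complete Ricci-flat $n$-manifold $(X_\infty,g_\infty,q_\infty)$ with $|\mathrm{Rm}|(q_\infty)=1$ and Euclidean volume growth at all scales. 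The Bishop--Gromov rigidity for $\mathrm{Ric}\geq 0$---equivalently Lemma \ref{lemma1.2} for $n=4$ and its generalization Lemma \ref{trivialgroup} for $n\geq 5$, applied to the maximal local Yamabe constant enforced by Euclidean volume growth---identifies the limit with $(\mathbb{R}^n,g_E)$, contradicting $|\mathrm{Rm}|(q_\infty)=1$.

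The main technical obstacle is pinning down the two blow-up limits as \emph{smooth} complete Einstein (respectively Ricci-flat) spaces of the right type, since a Ricci lower bound alone does not yield smooth convergence. This is precisely where the hard content of Theorem \ref{main-dutta} is used: the sharp lower volume bound simultaneously prevents collapsing (furnishing Cheeger injectivity-radius estimates) and forces Bishop--Gromov equality in the limit, after which the rigidity dichotomy (hyperbolic in Case~A, Euclidean in Case~B) is standard. The final assertion that $g^+$ is negatively curved follows at once from \eqref{close-to-hyper} as soon as $\epsilon<1$.
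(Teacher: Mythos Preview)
Your argument is correct and follows essentially the same two-pronged strategy as the paper: a blow-up step producing a Ricci-flat limit forced to be Euclidean by the volume equality coming from Theorem~\ref{main-dutta}, and a bounded-curvature step producing an Einstein limit forced to be hyperbolic by Bishop--Gromov rigidity. The paper organizes this sequentially (first prove a uniform global curvature bound, then pass to the limit), while you run the two cases in parallel; both work.

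Two minor remarks. First, your Hamilton-type point selection is unnecessary here: since each $(X_i^n,g_i^+)$ is asymptotically hyperbolic, $|W|[g_i^+]\to 0$ at infinity and the maximum of $|\mathrm{Rm}|$ is attained at an interior point, which is what the paper uses directly. Second, your parenthetical appeal to Lemma~\ref{trivialgroup} in the unbounded case is misleading: for $n\geq 5$ that lemma requires an $L^{n/2}$ bound on the Weyl tensor, which is \emph{not} among the hypotheses of Theorem~\ref{EHBoundary}. Fortunately you do not actually need it---the volume inequality \eqref{volume-com} together with Bishop--Gromov for $\mathrm{Ric}\geq 0$ already forces the rescaled limit to have $\mathrm{Vol}(B_R)=\mathrm{Vol}(B_{g_E}(0,R))$ for all $R$, and the equality case of Bishop--Gromov alone identifies it with $(\mathbb{R}^n,g_E)$. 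This is exactly how the paper argues, so simply drop the reference to Lemma~\ref{trivialgroup}.
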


This result is even more interesting because it gives the curvature pinch estimate which only relies on the Yamabe constant of the conformal infinity. Particularly,
from Theorem \ref{EHBoundary} we now know that any conformally compact Einstein manifold whose conformal infinity is prescribed as a conformal
sphere that is sufficiently close to the round conformal sphere is negatively curved, which was only known to be true for those conformally compact Einstein
manifolds constructed in \cite{GL, Bq}.
\\

As a consequence of the proof of Theorem \ref{EHBoundary} we also get:

\begin{cor}\label{CCEflat} For $L>0$, $\tau > \frac 12$ and $n\geq 4$, there is a number $C$ such that, on any conformally compact Einstein manifold $(X^n, \ g^+)$,
$$
|W|[g^+] \leq C
$$
provided that $$Y(\partial X, [\hat g]) \geq \tau^\frac 2{n-1} Y(\mathbb{S}^n, [g_{\mathbb{S}}])$$ and $$\int_{X^n}(|W|^\frac n2dv)[g^+]\leq L$$ when $n\geq 5$.
\end{cor}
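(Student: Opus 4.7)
The plan is to argue by contradiction via a blow-up analysis, combining the relative volume inequality of Theorem \ref{main-dutta} with the Ricci-flat rigidity supplied by Lemma \ref{trivialgroup} (of which Lemma \ref{lemma1.2} is the $n=4$ case). Suppose no such $C$ exists; then there is a sequence of conformally compact Einstein manifolds $(X_i^n, g_i^+)$ obeying the hypotheses of the corollary together with points $q_i \in X_i$ such that $|W|[g_i^+](q_i) \to \infty$. A Hamilton-style point-picking argument produces centers $p_i$ and scales $\lambda_i := |W|[g_i^+](p_i)^{1/2}\to\infty$ with the property that the rescaled metrics $\tilde{g}_i := \lambda_i^2 g_i^+$ satisfy $|W|_{\tilde{g}_i}(p_i) = 1$ and $|W|_{\tilde{g}_i}\leq 2$ on $\tilde{g}_i$-geodesic balls of radius $R_i \to \infty$. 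Since $\text{Ric}[\tilde{g}_i] = -(n-1)\lambda_i^{-2}\tilde{g}_i$ tends to zero on these balls, the local $C^0$ bound on $W$ together with the Einstein equation yields a local $C^0$ bound on the full Riemann tensor of $\tilde{g}_i$.

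Next, apply Theorem \ref{main-dutta} at the fixed point $p = p_i$ to deduce
$$
\frac{\text{Vol}(B_{g_i^+}(p_i, r))}{\text{Vol}(B_{g_{\mathbb{H}}}(0, r))} \geq \tau \quad \text{for every } r > 0.
$$
Rescaling and noting that $R/\lambda_i \to 0$ forces the hyperbolic denominator to collapse to the Euclidean volume, one obtains
$$
\text{Vol}(B_{\tilde{g}_i}(p_i, R)) = \lambda_i^n \text{Vol}(B_{g_i^+}(p_i, R/\lambda_i)) \geq \tau \omega_n R^n (1 + o(1))
$$
for each fixed $R>0$, so the sequence $(X_i, \tilde{g}_i, p_i)$ is uniformly non-collapsed.

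With these inputs in place, standard convergence theory for Einstein metrics produces, along a subsequence, a pointed Cheeger-Gromov limit $(X_\infty, g_\infty, p_\infty)$ that is a complete, non-compact, Ricci-flat $n$-manifold with $|W|_{g_\infty}(p_\infty) = 1$ and asymptotic volume ratio at least $\tau > 1/2$. The scale-invariant bound $\int|W|^{n/2}dv \leq L$ passes to the limit when $n \geq 5$; for $n = 4$, the required finiteness of $\int|W|^2$ is extracted from the Chern-Gauss-Bonnet identity \eqref{Gauss-Bonnet} together with the two-sided control on the renormalized volume afforded by \eqref{volume-com} and the topological bound on $\chi(X_i)$ available in this regime. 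These inputs meet the hypotheses of Lemma \ref{trivialgroup}, and since $\tau > 1/2$ lies above its critical threshold, the limit must be isometric to $(\mathbb{R}^n, g_E)$. But $W \equiv 0$ on $(\mathbb{R}^n, g_E)$, contradicting $|W|_{g_\infty}(p_\infty) = 1$.

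The principal obstacle is passing the base-point volume inequality to the blow-up limit with the correctly identified asymptotic volume ratio; the threshold $\tau > 1/2$ is chosen precisely to fall above the gap in Lemma \ref{trivialgroup} that excludes nontrivial ALE orbifold ends in the limit. The $n=4$ case is the more delicate one because the required Weyl energy bound must be obtained topologically rather than assumed as a hypothesis.
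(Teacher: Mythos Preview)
Your overall blow-up strategy is exactly the paper's approach: contradict, rescale at a curvature maximum, use Theorem~\ref{main-dutta} to get a uniform lower volume ratio $\tau>\tfrac12$ at the basepoints, pass to a complete Ricci-flat limit, and rule it out. Two points, however, need correction.

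First, you invoke Lemma~\ref{trivialgroup}, but its hypothesis is a \emph{Yamabe constant} lower bound $Y(M,g)\ge \tau\,Y(\mathbb{R}^n,g_E)$ with threshold $\tau>2^{-2/n}$, whereas what you actually obtain from Theorem~\ref{main-dutta} (after rescaling) is a \emph{volume ratio} lower bound $\operatorname{Vol}(B_R)/(\omega_n R^n)\ge \tau$ with $\tau>\tfrac12$. These are not the same statement, and you have not verified the former. The paper's phrase ``in the same spirit as in the proof of Lemma~\ref{trivialgroup}'' means you should run the end analysis directly on the Ricci-flat limit: by Theorem~\ref{thmasympflat} (using the $L^{n/2}$ bound for $n\ge 5$) or Lemma~\ref{improved} (Cheeger--Naber, for $n=4$), the limit is ALE with tangent cone $\mathbb{R}^n/\Gamma$, whose asymptotic volume ratio is $1/|\Gamma|$; thus $|\Gamma|\le 1/\tau<2$, forcing $\Gamma$ trivial and the limit Euclidean. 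This is the argument you need, not a black-box call to Lemma~\ref{trivialgroup}.

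Second, your $n=4$ detour via Gauss--Bonnet does not work and is unnecessary. The identity \eqref{Gauss-Bonnet} gives $\int|W|^2 = 32\pi^2\chi(X_i)-24V(X_i,g_i^+)$, but nothing in the hypotheses bounds $\chi(X_i)$, and \eqref{volume-com} does not control the renormalized volume $V(X_i,g_i^+)$. The correct route in dimension~4 is precisely Lemma~\ref{improved}: Cheeger--Naber gives the finite $L^2$ curvature bound on the Ricci-flat limit for free from the Euclidean volume growth, so no a priori $L^2$ Weyl bound on $X_i$ is required.

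A minor remark: Hamilton point-picking is superfluous here. Since each $(X_i,g_i^+)$ is asymptotically hyperbolic, $|W|[g_i^+]\to 0$ at infinity, so the global maximum of $|W|$ is attained at an interior point $p_i$ and you can rescale at that point directly, as in the first paragraph of the proof of Theorem~\ref{EHBoundary}.
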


The organization of this paper is as follows: in Section \ref{Sect:preliminary}, we will introduce some basics about AH manifolds and conformally compact Einstein
manifolds. Particularly we will prove Lemma \ref{lem:cur-decay}.
In Section \ref{Sect:large-volume} we use the renormalized volume to control the Yamabe constant of the conformally compact Einstein 4-manifolds and
prove Theorem \ref{CGT} of the gap phenomenon and Theorem \ref{curvature-bound} of curvature estimates. In Section \ref{Sect:large-yamabe} We will first sketch a
proof of Theorem \ref{main-dutta} based on the idea in \cite{DJ}. We will identify the incompleteness and incorrectness of the arguments in \cite{DJ} in the sketch of the
proof of Theorem \ref{main-dutta}.
Then we will use Theorem \ref{main-dutta} to obtain Theorem \ref{EHBoundary} and Corollary \ref{CCEflat}. In Section \ref{Sect:normal-cut-point}
and Section \ref{Sect:curvature-estimate} we will resolve the gaps identified in Section \ref{Sect:large-yamabe}
and present the complete and correct proof of Theorem \ref{main-dutta} with details.

\vskip 0.2in\noindent
{\bf Acknowledgment} The authors are very grateful to Beijing International Center for Mathematical Research (BICMR) for the hospitality. Most of research in this paper
was conducted during the summer when the authors visit BICMR. The first author would like to thank Professor Matthew Gursky for his interest of the problem and encouragement


\section{Preliminaries}\label{Sect:preliminary}

Let us recall some basics about AH manifolds and conformally compact Einstein manifolds. First we use the following definition for conformally compact Einstein manifolds.

\begin{defn}\label{def-AH} Suppose that $X^{n}$ is the interior of a smooth compact manifold $\overline{X^{n}}$ with boundary $\partial X^{n-1}$.
A Riemannian metric $g^+$ on $X^{n}$ is said to be conformally compact of $C^{k, \alpha}$ regularity if, for a smooth defining function $x$ for
the boundary $\partial X^{n-1}$ in $\overline{X^{n}}$, $\bar g = x^2 g^+$ can be extended to a $C^{k, \alpha}$ Riemannian metric on $\overline{X^{n}}$.
If, in addition, $|dx|^2_{x^2g^+}|_{x=0}=1$, then we say $(X^{n}, \ g^+)$ is
asymptotically hyperbolic (AH in short) of $C^{k, \alpha}$ regularity. And if, in addition, $g^+$ is at least of $C^2$ regularity and Einstein, that is,
\begin{equation}\label{einstein}
\text{Ric}[g^+] = - (n-1) g^+,
\end{equation}
then we say
$(X^{n}, \ g^+)$ is a conformally compact Einstein manifold.
\end{defn}

A smooth defining function $x$ for the boundary $\partial X$ in a smooth manifold $\overline{X^{n}}$ is a smooth nonnegative function from $\overline{X^{n}}$ such that
\begin{itemize}
\item $x > 0$ in the interior $X^{n}$;
\item $x = 0$ on the boundary $\partial X^{n-1}$;
\item $dx \neq 0$ on the boundary $\partial X^{n-1}$.
\end{itemize}
The compactification $\bar g$ induces a metric $\hat g$ on the boundary $\partial X^{n-1}$ and changes conformally when the defining function $x$ varies. Hence a conformally compact
metric $g^+$ always induces a conformal structure $[\hat g]$ on the boundary $\partial X^{n-1}$. The conformal manifold $(\partial X^{n-1}, \  [\hat g])$ is
called the conformal infinity of the conformally compact manifold $(X^{n}, \ g^+)$. \\

Before we recall basic properties of conformally compact Einstein manifolds, we give a proof of Lemma \ref{lem:cur-decay} based on the proof of Lemma 3.1 in \cite{CDLS}.
\\

\noindent{\it Proof of Lemma \ref{lem:cur-decay}} \quad The first step completely follows the proof of Lemma 3.1 in \cite{CDLS} and concludes that, there is a coordinate at infinity
(up to a $C^3$ collar diffeomorphism in the language in \cite{CDLS}) such that, for a defining function $x$,
\begin{equation}\label{good-cor}
\bar g = x^2 g^+ = dx^2 + \hat g + g^{(1)}x + O(x^2)\in C^2(\overline{X^n})
\end{equation}
for some symmetric 2-tensor $g^{(1)}$ and a representative $\hat g = x^2g^+|_{T\partial X}$ on $\partial X$.
\\

We then calculate the transforms of Riemann curvature, Ricci curvature and scalar curvature based on $g^+ = x^{-2}\bar g$.
$$
R_{ijkl}[g^+] = -(g^+_{ik}g^+_{jl} - g^+_{jk}g^+_{il}) + x^{-1}(g^+_{ik}\nabla^{\bar g}_j\nabla^{\bar g}_l x + g^+_{jl}\nabla^{\bar g}_i\nabla^{\bar g}_k x -
g^+_{il}\nabla^{\bar g}_j\nabla^{\bar g}_k x - g^+_{jk}\nabla^{\bar g}_i\nabla^{\bar g}_l x) + O(x^2),
$$
$$
R_{ik}[g^+] = -(n-1)g^+ + x^{-1}((n-2)\nabla^{\bar g}_i\nabla^{\bar g}_k x + \Delta[\bar g] x \bar g_{ik}) + O(x^2),
$$
and
$$
R[g^+] = -n(n-1) + 2x(n-1)\Delta[\bar g] x + O(x^2).
$$
In the same time we can calculate
$$
\nabla^{\bar g}_i\nabla^{\bar g}_k x = \frac 12 \partial_x \bar g_{ik} + O(x).
$$
Therefore the condition \eqref{alter-einstein-1} are translated to
$$
(n-2)\partial_x \bar g_{ik} + \bar g^{jl}\partial_x \bar g_{jl} \bar g^{ik} \geq 0 \text{  and  } \bar g^{jl} \partial_x g_{jl} = 0 \text{ at $x=0$},
$$
which implies $g^{(1)}=0$ and thus \eqref{cur-decay}.
\qed
\\

The fundamental properties of conformally compact Einstein 4-manifolds that are useful to us are summarized in the following:

\begin{lem}
( \cite{FG,Graham,CDLS, Mazzeo1}) Let $(X^4, \ g^+)$ be a conformally compact Einstein manifold and $x$ be the geodesic defining function
associated with a representative $\hat{g}$ of the conformal infinity $(\partial X^3, [\hat{g}])$. In a neighborhood of the infinity
\begin{align}\label{canonical-form}
g=x^{-2}\tilde{g} = x^{-2}(dx^2+g_x)
\end{align}
with the expansion
\begin{align}\label{metric-expansion}
g_x=\hat{g} + g^{(2)}x^2 + g^{(3)}x^3 +\sum_{k = 4}^{m} g^{(k)}x^k + o(x^m)
\end{align}
for any $m\geq 4$, where $g^{(k)}$ is a symmetric $(0, 2)$ tensor on $\partial X$ for all $k$ and $g^{(3)}$ is the so-called non-local term. Moreover,
the expansion \eqref{metric-expansion} only has even power terms, when $g^{(3)}$ vanishes.
\end{lem}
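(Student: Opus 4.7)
The plan is to combine the existence of the geodesic defining function, the Fefferman--Graham formal asymptotic analysis of the Einstein equation, and the sharp regularity results of \cite{CDLS, Mazzeo1} to obtain the claimed expansion. First, given a representative $\hat{g}\in[\hat g]$, I would produce the unique geodesic defining function $x$ by solving the Hamilton--Jacobi equation $|d\log x|_{g^+}^2=1$ near $\partial X$ with $x^2 g^+|_{T\partial X}=\hat g$. Standard ODE arguments along the $g^+$-normal flow put $g^+$ in the canonical form \eqref{canonical-form} with $g_x$ a one-parameter family of metrics on $\partial X$ satisfying $g_0=\hat g$.

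Next I would translate the Einstein condition $\text{Ric}[g^+]=-(n-1)g^+$ into a system for $g_x$. In the gauge \eqref{canonical-form} (with $n=4$) this becomes, schematically,
\begin{equation*}
x\, g_x'' - (n-2)g_x' - \text{tr}_{g_x}(g_x')\, g_x + 2x\,\text{Ric}[g_x] - x\,\text{tr}_{g_x}(g_x')\, g_x' + 2x\, g_x' g_x^{-1} g_x' = 0,
\end{equation*}
together with first-order constraints coming from the mixed components of Ricci. I would then plug in the formal Taylor series $g_x\sim\hat g+g^{(1)}x+g^{(2)}x^2+\cdots$ and solve order by order. Using Lemma \ref{lem:cur-decay} (or equivalently the vanishing of the trace of the linear term forced by $R[g^+]+n(n-1)=o(x)$) I conclude $g^{(1)}=0$; the $x^2$-coefficient equation then determines $g^{(2)}$ explicitly as a curvature tensor of $\hat g$ (the Schouten tensor in dimension $n=4$). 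At the critical order $x^{n-1}=x^3$ the recursion degenerates: only $\text{tr}_{\hat g}g^{(3)}$ and $\nabla^{\hat g}\cdot g^{(3)}$ are determined by $\hat g$, while the trace-free divergence-free part of $g^{(3)}$ remains free — this is the non-local term. For all $k\geq 4$ the coefficient $g^{(k)}$ is then determined as a polynomial expression in $\hat g$ and $g^{(3)}$.

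To upgrade the formal expansion to an honest asymptotic one with $o(x^m)$ remainder for every $m\geq 4$, I would invoke the polyhomogeneous expansion theorems of Chru\'sciel--Delay--Lee--Skinner \cite{CDLS} and Mazzeo \cite{Mazzeo1}: once the finite-order formal solution exists, the a priori regularity of a $C^2$ conformally compact Einstein metric together with boundary elliptic regularity for the gauge-fixed Einstein operator rules out logarithmic terms in dimension $4$ through any finite order and provides the desired remainder estimate.

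Finally, for the parity statement, I would argue by induction on $k$ under the hypothesis $g^{(3)}=0$. The recursion expressing $g^{(k+2)}$ in terms of lower-order coefficients preserves the parity pattern: if all $g^{(j)}$ with $j$ odd vanish up to order $k$, then the right-hand side producing $g^{(k+2)}$ involves only even-order data and vanishes at odd indices. The main obstacle, as in the original Fefferman--Graham analysis, is the degeneration of the recursion at order $n-1=3$, which is precisely why $g^{(3)}$ is the first non-local coefficient and why its vanishing is the exact condition that forces the even-parity expansion.
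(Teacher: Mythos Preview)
The paper does not supply its own proof of this lemma; it is stated as a known result and attributed to \cite{FG,Graham,CDLS,Mazzeo1}. Your outline is a correct and faithful summary of precisely the approach in those references: the geodesic-gauge normal form, the Fefferman--Graham recursion with the degeneration at order $n-1=3$ producing the non-local $g^{(3)}$, the polyhomogeneous regularity of \cite{CDLS,Mazzeo1} to justify the asymptotic expansion, and the parity induction when $g^{(3)}=0$. There is nothing to compare beyond noting that you have reconstructed the standard argument that the paper simply cites.
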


As a consequence of the expansion \eqref{metric-expansion} one gets the following volume expansion:

\begin{lem} (\cite{HS,Graham}) Let $(X^4, \ g^+)$ be a conformally compact Einstein manifold and $x$ be the geodesic defining function
associated with a representative $\hat{g}$ of the conformal infinity $(\partial X^3, [\hat{g}])$.  One has
\begin{align*}
\text{Vol}(\{x > \epsilon\}) = \frac 13 \text{Vol}(\partial X, \hat g) \epsilon^{-3} - \frac 18 \int_{\partial X} (Rdv)[\hat g] \epsilon^{-1} + V(X^4, \ g^+) + o(1).
\end{align*}
More importantly $V(X^4, \ g^+)$ is independent of the choice of representative $\hat g$ and is called the renormalized volume.
\end{lem}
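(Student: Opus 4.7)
The plan is to work in the geodesic normal coordinates associated to $\hat g$, write down the volume element in terms of the Fefferman--Graham expansion, integrate in $x$ from $\epsilon$ out to some fixed small collar radius $\delta$, and then read off the divergent and finite parts. First I would use the canonical form $g^+ = x^{-2}(dx^2 + g_x)$ from \eqref{canonical-form} so that, on a collar $(0,\delta) \times \partial X$,
\begin{equation*}
dv_{g^+} = x^{-4} \sqrt{\frac{\det g_x}{\det \hat g}}\, dx\, dv_{\hat g}.
\end{equation*}
Expanding the ratio of determinants by $\log\det = \mathrm{tr}\log$ and using \eqref{metric-expansion} gives
\begin{equation*}
\sqrt{\frac{\det g_x}{\det \hat g}} = 1 + \tfrac{1}{2}\mathrm{tr}_{\hat g}(g^{(2)})\, x^2 + \tfrac{1}{2}\mathrm{tr}_{\hat g}(g^{(3)})\, x^3 + O(x^4).
\end{equation*}

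Next I would split $\mathrm{Vol}(\{x > \epsilon\})$ as the collar integral over $\epsilon < x < \delta$ plus the fixed finite piece $\mathrm{Vol}(\{x \geq \delta\})$. The collar integral separates term by term: the $x^{-4}$ contribution produces $\tfrac{1}{3}\mathrm{Vol}(\partial X,\hat g)\epsilon^{-3}$, the $x^{-2}$ contribution produces $\tfrac{1}{2}\int_{\partial X}\mathrm{tr}_{\hat g}(g^{(2)})\,dv_{\hat g}\cdot\epsilon^{-1}$, the $x^{-1}$ contribution produces a $\log(1/\epsilon)$ term with coefficient $\tfrac{1}{2}\int_{\partial X}\mathrm{tr}_{\hat g}(g^{(3)})\,dv_{\hat g}$, and the remainder is $O(1)$ as $\epsilon\to 0$. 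To match the stated expansion I would then invoke the two algebraic facts from Fefferman--Graham: for Einstein bulk and boundary dimension $3$, $g^{(2)}$ is (up to sign) the Schouten tensor of $\hat g$, so $\mathrm{tr}_{\hat g}(g^{(2)}) = -\tfrac{1}{4}R[\hat g]$, producing the coefficient $-\tfrac{1}{8}$; and the nonlocal term satisfies $\mathrm{tr}_{\hat g}(g^{(3)}) = 0$, which kills the logarithmic divergence and leaves a genuine finite part $V(X^4, g^+)$.

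For the second assertion, that $V(X^4,g^+)$ is independent of the representative $\hat g$, I would let $\hat g' = e^{2w}\hat g$ and recall the standard fact (from \cite{Graham}) that there is a new geodesic defining function $x'$ with $x' = e^w x (1 + O(x^2))$ solving the eikonal equation $|dx'|^2_{x'^2 g^+} = 1$. Computing $\mathrm{Vol}(\{x' > \epsilon\})$ and comparing to $\mathrm{Vol}(\{x > \epsilon\})$ via the symmetric difference $\{x > \epsilon\}\triangle\{x' > \epsilon\}$, one reduces the question to checking that the divergent parts transform exactly as dictated by the new $\hat g'$, and that all leftover contributions from the conformal factor are $o(1)$; this is a bookkeeping calculation on the collar using the same expansion.

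I expect the only genuinely nonroutine input to be the two trace identities $\mathrm{tr}_{\hat g}(g^{(2)}) = -\tfrac{1}{4}R[\hat g]$ and $\mathrm{tr}_{\hat g}(g^{(3)}) = 0$, since everything else is direct integration and a conformal change of defining function. Both are derived by plugging \eqref{metric-expansion} into the Einstein equation \eqref{einstein} and reading off the coefficients of successive powers of $x$: the $x^0$ order gives the formula for $g^{(2)}$ in terms of $\mathrm{Ric}[\hat g]$, and the $x^1$ order forces the trace of $g^{(3)}$ to vanish. These computations are classical \cite{FG,Graham}, so I would quote them rather than re-derive them, making the proof of the lemma essentially a controlled integration of the volume form along the collar.
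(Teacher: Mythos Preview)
The paper does not itself prove this lemma; it is stated as a known result cited from \cite{HS,Graham}. Your sketch is precisely the standard argument from \cite{Graham}: expand the volume form on the collar via the Fefferman--Graham expansion \eqref{metric-expansion}, integrate in $x$, and invoke the trace identities $\text{tr}_{\hat g}(g^{(2)}) = -\tfrac14 R[\hat g]$ and $\text{tr}_{\hat g}(g^{(3)}) = 0$ forced by the Einstein equation, the latter killing the potential $\log(1/\epsilon)$ term. The outline is correct and is exactly the derivation in the cited references, so there is nothing to compare against in this paper.
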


To appreciate the global invariant $V(X^4, \ g^+)$ of a conformally compact Einstein 4-manifold $(X^4, \ g^+)$ we recall the Gauss-Bonnet formula observed
in \cite{And2,CQY3}.

\begin{lem} Let $(X^4, \ g^+)$ be a conformally compact Einstein 4-manifold. Then
\begin{equation}\label{Gauss-Bonnet-2}
8\pi^2 \chi(X) = \frac{1}{4} \int_X (|W|^2 d v)[g^+] + 6 V(X^4, \ g^+)
\end{equation}
\end{lem}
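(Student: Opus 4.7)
The plan is to obtain \eqref{Gauss-Bonnet-2} by applying the Chern--Gauss--Bonnet formula with boundary to the exhaustion $\Omega_\epsilon = \{x > \epsilon\}$, expanding everything asymptotically as $\epsilon \to 0$, and showing that the divergent pieces cancel.

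First I would record that on an Einstein $4$-manifold with $\text{Ric}[g^+] = -3g^+$, the traceless Ricci vanishes and $R[g^+] = -12$, so
\begin{equation*}
\sigma_2(A[g^+]) = \tfrac{1}{24}R[g^+]^2 - \tfrac{1}{2}|\overset{\circ}{\text{Ric}}[g^+]|^2 = 6.
\end{equation*}
Hence
\begin{equation*}
\int_{\Omega_\epsilon} \sigma_2(A[g^+])\,dv[g^+] = 6\,\text{Vol}(\Omega_\epsilon, g^+),
\end{equation*}
and by the volume expansion recorded in the previous lemma this equals
\begin{equation*}
6\bigl(\tfrac{1}{3}\text{Vol}(\partial X, \hat g)\epsilon^{-3} - \tfrac{1}{8}\int_{\partial X}(R\,dv)[\hat g]\,\epsilon^{-1} + V(X^4, g^+)\bigr) + o(1).
\end{equation*}

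Next I would apply the Chern--Gauss--Bonnet formula with boundary to $(\Omega_\epsilon, g^+)$:
\begin{equation*}
8\pi^2\chi(\Omega_\epsilon) = \int_{\Omega_\epsilon}\bigl(\tfrac{1}{4}|W|^2 + \sigma_2(A)\bigr)dv[g^+] + \int_{\{x=\epsilon\}} \mathcal{T}(g^+),
\end{equation*}
where $\mathcal{T}(g^+)$ is the standard transgression form built from the second fundamental form and intrinsic curvature of the geodesic level set $\{x=\epsilon\}$. For small $\epsilon$ the set $\Omega_\epsilon$ is diffeomorphic to $X$ (the level sets of $x$ are smooth hypersurfaces foliating a collar), so $\chi(\Omega_\epsilon) = \chi(X)$. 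The key analytic step is to expand $\mathcal{T}(g^+)$ in $\epsilon$ using the Fefferman--Graham normal form \eqref{metric-expansion}: in the metric $g^+ = x^{-2}(dx^2 + g_x)$ the second fundamental form of the level set $\{x = \epsilon\}$ and its intrinsic scalar quantities have explicit expansions in $\epsilon$ whose coefficients are determined by $\hat g$ and $g^{(2)}$ (the odd term $g^{(3)}$ drops out to the order needed). This yields an expansion of the form
\begin{equation*}
\int_{\{x=\epsilon\}} \mathcal{T}(g^+) = A\epsilon^{-3} + B\epsilon^{-1} + o(1),
\end{equation*}
where $A$ and $B$ are local integrals over $(\partial X, \hat g)$ depending only on $\hat g$ (and $g^{(2)}$, which itself is a local curvature quantity of $\hat g$).

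Finally, since $\int_{\Omega_\epsilon}|W|^2\,dv[g^+]$ is conformally invariant and converges as $\epsilon\to 0$ (the integrand is bounded in terms of a compactified metric thanks to \eqref{cur-decay}-type asymptotics for Weyl), comparing the $\epsilon^{-3}$, $\epsilon^{-1}$ and $\epsilon^0$ coefficients in the identity
\begin{equation*}
8\pi^2\chi(X) = \tfrac{1}{4}\int_{\Omega_\epsilon}|W|^2\,dv[g^+] + 6\,\text{Vol}(\Omega_\epsilon, g^+) + \int_{\{x=\epsilon\}}\mathcal{T}(g^+),
\end{equation*}
forces the coefficients $A + 6\cdot\tfrac{1}{3}\text{Vol}(\partial X,\hat g) = 0$ and $B - 6\cdot\tfrac{1}{8}\int_{\partial X}(R\,dv)[\hat g] = 0$ by independence of $\epsilon$, and taking $\epsilon \to 0$ in the constant term yields \eqref{Gauss-Bonnet-2}.

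The main obstacle is the bookkeeping of the transgression expansion: one must verify that the divergent terms coming from $\int_{\{x=\epsilon\}}\mathcal{T}(g^+)$ exactly cancel the divergent terms of $6\,\text{Vol}(\Omega_\epsilon, g^+)$ and that no $\epsilon^{-2}$ term appears (this is where the parity of the Fefferman--Graham expansion and the fact that $g^{(3)}$ is non-local but of higher order play a role). Once the cancellations are established, conformal invariance of $|W|^2\,dv$ in dimension four together with the definition of the renormalized volume yields the stated identity independent of the representative $\hat g$.
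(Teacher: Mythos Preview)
The paper does not actually prove this lemma; it is recalled from \cite{And2,CQY3} as background. Your outline is precisely Anderson's argument in \cite{And2}: apply Chern--Gauss--Bonnet with boundary to $\Omega_\epsilon=\{x>\epsilon\}$, use that $\sigma_2(A[g^+])\equiv 6$ on an Einstein manifold with $\text{Ric}=-3g^+$, and pass to the limit.

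One point worth sharpening: the cancellation of the $\epsilon^{-3}$ and $\epsilon^{-1}$ terms is indeed automatic, as you note, since the left side is constant and the Weyl integral converges. What is \emph{not} automatic is your claimed form $\int_{\{x=\epsilon\}}\mathcal{T}(g^+)=A\epsilon^{-3}+B\epsilon^{-1}+o(1)$ with no $\epsilon^0$ term. If the transgression expansion carried a nonzero constant $C$, the limiting identity would read $8\pi^2\chi(X)=\tfrac14\int|W|^2+6V+C$, and nothing in the ``independence of $\epsilon$'' argument rules that out. So the genuine content of the computation in \cite{And2} is not the cancellation of divergences but the verification (via the explicit second fundamental form and intrinsic curvature of $\{x=\epsilon\}$ in the Fefferman--Graham gauge) that the constant term of the boundary integral vanishes. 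Your last paragraph identifies ``bookkeeping'' as the obstacle but locates it at the divergent terms; shift the emphasis to the $\epsilon^0$ coefficient and the sketch is complete.
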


Comparing the Gauss-Bonnet formula \eqref{Gauss-Bonnet-2} with the Gauss-Bonnet formula for compact 4-manifold $(X, \bar g)$ with totally geodesic boundary:
\begin{equation}\label{Gauss-Bonnet-b}
8\pi^2 \chi(X) = \frac{1}{4} \int_X (|W|^2 d v)[\bar g] + \int_X (\sigma_2(A)dv)[\bar g],
\end{equation}
we arrive at
\begin{equation}\label{sigma-2-gauss}
\int_X (\sigma_2(A)dv)[\bar g] = 6 V(X^4, \ g^4),
\end{equation}
for any compactification $\bar g = x^2 g^+$.
\\

To discuss the conformal gap theorem in \cite{CQY1} we recall the definition of Bach curvature tensor and $\epsilon$-regularity theorem for Bach flat Yamabe metrics.
On 4-manifolds the Bach curvature tensor is a symmetric 2-tensor
\begin{equation}\label{bach}
B_{ij} =  W_{kijl,}^{\quad lk} + \frac 12 R^{kl}W_{kijl}.
\end{equation}
The $\epsilon$-regularity theorem has been established in \cite{Tian-Viaclovsky} as follows:

\begin{lem}\label{TV-curvature} (\cite{Tian-Viaclovsky}) Suppose that $(M^4, g)$ is a
Bach flat 4-manifold with Yamabe constant $Y >0$ and $g$ is a
Yamabe metric. Then there exist positive numbers $\tau_k$ and
$C_k$ depending on $Y$ such that, for each geodesic ball
$B_{2r}(p)$ centered at $p\in M$, if
$$
\int_{B_{2r}(p)} |Rm|^2 dv \leq \tau_k,
$$
then
\begin{equation}\label{tv-curavture}
\sup_{B_r(p)} |\nabla^k Rm| \leq  \frac {C_k}{r^{2+k}}
(\int_{B_{2r}(p)} |Rm|^2 dv)^\frac 12.
\end{equation}
\end{lem}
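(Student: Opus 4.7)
\textbf{Proof plan for Lemma \ref{TV-curvature}.} The strategy is the standard $\epsilon$-regularity argument: derive an elliptic differential inequality for $|\text{Rm}|$ (equivalently $|W|$ after controlling scalar and traceless Ricci), apply Moser iteration under the hypothesis of small $L^2$ energy to get the pointwise bound on $|\text{Rm}|$, then bootstrap with Schauder theory to control all higher covariant derivatives. I will treat $k=0$ in detail; the case $k\geq 1$ follows from standard elliptic regularity once $|\text{Rm}|$ is controlled.

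First I would set up the governing PDE. Since $g$ is a Yamabe metric, the scalar curvature $R[g]$ is constant, so $\nabla R = 0$. The contracted second Bianchi identity then implies $\nabla^\ell W_{kij\ell}$ is expressible through the Cotton tensor of the traceless Ricci, and the Bach flat equation $B_{ij} = W_{kij\ell,}{}^{\ell k} + \tfrac{1}{2}R^{k\ell}W_{kij\ell}= 0$ gives a second-order elliptic equation for the Weyl tensor of the schematic form $\Delta W = W * W + R * W + \overset{\circ}{\text{Ric}} * \overset{\circ}{\text{Ric}}$. Combining this with the standard Weitzenb\"ock/Simons-type identity for $\nabla^2|W|^2$ yields a differential inequality
$$
\Delta |W|^2 \geq 2|\nabla W|^2 - C(Y)\,|W|^3 - C(Y)\,|W|^2,
$$
with an analogous bound for $|\overset{\circ}{\text{Ric}}|^2$ (the latter being controlled by $|W|^2$ up to lower order terms using the Yamabe equation and Bach flatness).

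Second, I would run the Moser iteration. The assumption $Y(M,[g])>0$ with $g$ a Yamabe metric furnishes a uniform local Sobolev inequality with constant depending only on $Y$, so I can start from the differential inequality above, multiply by $|W|^{2p-2}\eta^2$ with a cutoff $\eta$ supported in $B_{2r}(p)$, integrate by parts, and absorb the cubic term $\int |W|^{2p+1}\eta^2$ using H\"older's inequality $\int|W|^{2p+1}\eta^2 \leq (\int_{B_{2r}} |W|^2)^{1/2}(\int |W|^{2(p+1)\cdot 2}\eta^4)^{1/2}\cdot\ldots$ together with Sobolev. The hypothesis $\int_{B_{2r}(p)} |\text{Rm}|^2\,dv\leq \tau_0$ with $\tau_0$ sufficiently small (depending only on $Y$) is exactly what is needed to absorb this cubic term into the good gradient term. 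Iterating over a sequence of shrinking radii from $2r$ down to $r$ yields
$$
\sup_{B_r(p)} |\text{Rm}| \leq \frac{C_0}{r^2}\bigl(\int_{B_{2r}(p)} |\text{Rm}|^2\,dv\bigr)^{\frac{1}{2}}.
$$

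Finally, once $|\text{Rm}|$ is bounded pointwise, the Bach flat system becomes a quasilinear elliptic system for the metric in harmonic coordinates with bounded coefficients, and repeated differentiation combined with Schauder estimates inductively yields
$\sup_{B_r(p)} |\nabla^k \text{Rm}| \leq C_k r^{-2-k}\bigl(\int_{B_{2r}(p)} |\text{Rm}|^2\bigr)^{1/2}$, by reducing the radius at each step. The main obstacle, as always in such arguments, is producing the clean elliptic equation for $W$: Bach flatness only provides a divergence equation, and one must carefully use the Bianchi identity and constancy of $R$ (Yamabe condition) to upgrade to a Laplacian equation with controlled quadratic right-hand side, ensuring the Sobolev constant entering the iteration depends only on $Y$.
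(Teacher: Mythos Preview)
The paper does not give its own proof of this lemma: it is stated in the preliminaries as a result quoted from \cite{Tian-Viaclovsky}, with no argument supplied. So there is nothing in the paper to compare your proposal against directly.

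That said, your outline is essentially the approach Tian and Viaclovsky take in \cite{Tian-Viaclovsky}, and it is correct in structure. The constant scalar curvature (from the Yamabe condition) plus the Bach flat equation and the second Bianchi identity do combine to produce an elliptic system with quadratic nonlinearity in the curvature; the positive Yamabe constant supplies the uniform Sobolev inequality needed for Moser iteration; the smallness of the local $L^2$ curvature energy is precisely what allows absorption of the cubic term; and the bootstrap to higher derivatives via harmonic coordinates and Schauder theory is standard. Your closing caveat about upgrading the divergence-form Bach equation to a genuine Laplacian-type inequality is exactly the delicate point, and Tian--Viaclovsky handle it by working with the full coupled system for $W$ and $\overset{\circ}{\text{Ric}}$ simultaneously rather than for $W$ alone. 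If you were to flesh this out, that coupling is where the work lies; otherwise the sketch is sound.
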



\section{Conformally compact Einstein manifolds with large renormalized volumes}\label{Sect:large-volume}

In this section we use Cheeger and Naber's result in \cite{CN} in dimension $4$ to drop the $L^2$ boundedness condition in the conformal gap theorem in \cite{CQY1}
and prove a version of conformal gap theorem of renormalized volumes on conformally compact Einstein manifolds. First we state Cheeger and Naber's result.

\begin{thm}\label{Weylint4}(Theorem 1.5 \cite{CN}) There exists $C = C(v)$ such that, if $M^4$ satisfies $|Ric_{M^4} | \leq 3$ and $Vol(B_1(p)) > v > 0$, then
\begin{equation}\label{cheeger-naber}
\fint_{B_1(p)}|Rm|^2dV\leq C,
\end{equation}
where $B_1(p)$ is a geodesic ball on $(M^4, g)$.
\end{thm}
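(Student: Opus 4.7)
The plan is to follow the quantitative stratification and codimension-four approach of Cheeger and Naber: there is no hope of a pointwise curvature bound under merely a Ricci bound, but one can locate where curvature is allowed to concentrate and show the concentration set has codimension at least four in a quantitative sense, which is exactly what is needed to close an $L^{2}$ integral in dimension four.

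First I would set up the quantitative symmetry strata. For $\eta>0$ and $0<r<1$, let $\mathcal{S}^{k}_{\eta,r}\subset B_{1}(p)$ denote the points $x$ such that no ball $B_{s}(x)$ with $s\in[r,1]$ is $(k+1,\eta)$-symmetric, meaning not $\eta s$-close in the pointed Gromov--Hausdorff sense to a ball in a metric cone splitting off a $(k+1)$-dimensional Euclidean factor. The Ricci bound together with the non-collapsing hypothesis $\mathrm{Vol}(B_{1}(p))>v$ places us squarely in the Cheeger--Colding theory of Ricci limit spaces, where these strata and their tangential behavior are well controlled.

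Second, and this is the central technical step, I would establish the Minkowski-type volume bound
\[
\mathrm{Vol}\bigl(T_{r}(\mathcal{S}^{k}_{\eta,r})\cap B_{1/2}(p)\bigr)\le C(n,v,\eta)\, r^{\,n-k-\eta}.
\]
The proof is by induction on scales and exploits the \emph{cone-splitting principle}: two almost-symmetries with distinct directions combine to an almost-symmetry one dimension higher. In our setting $n=4$ the critical stratum is $\mathcal{S}^{0}_{\eta,r}$, of quantitative codimension $4-\eta$. Third, I would invoke Anderson's $\eta$-regularity for manifolds with bounded Ricci curvature: if $B_{r}(x)$ is sufficiently Gromov--Hausdorff close to a Euclidean ball $B_{r}(0)\subset\mathbb{R}^{4}$, then $|\mathrm{Rm}|(x)\le C r^{-2}$. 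Combining the codimension-$4$ stratum volume bound with this $\eta$-regularity and a dyadic layer-cake decomposition of $|\mathrm{Rm}|$ yields the $L^{2}$ integrability asserted in Theorem~\ref{Weylint4}, with constant depending only on the non-collapsing constant $v$.

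The main obstacle is clearly the Minkowski volume estimate for the quantitative stratum. Once that is in hand, the $\eta$-regularity step is essentially Anderson's harmonic-coordinate argument and the passage to an $L^{2}$ bound is a routine dyadic covering. The volume estimate itself, however, is the deep analytic heart of \cite{CN}, driven by the cone-splitting induction together with the Colding--Naber rectifiability theory; it is not a calculation but a substantial structural theorem. For the purposes of the present paper we simply invoke it as a black box.
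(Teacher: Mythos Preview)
The paper does not prove this theorem; it is stated as Theorem~1.5 of \cite{CN} and used as a black box, exactly as you ultimately recommend in your final sentence. In that respect your proposal matches the paper.

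However, the sketch you give of the Cheeger--Naber argument contains a genuine gap, and it misidentifies where the depth of \cite{CN} actually lies. With only Anderson's $\eta$-regularity (Gromov--Hausdorff closeness to a Euclidean ball $B_{r}(0)\subset\mathbb{R}^{4}$ implies $|\mathrm{Rm}|\le Cr^{-2}$), a point of large curvature at scale $r$ is merely known not to be $(4,\eta)$-symmetric, which places it in $\mathcal{S}^{3}_{\eta,r}$, not in $\mathcal{S}^{0}_{\eta,r}$. The Minkowski bound for $\mathcal{S}^{3}_{\eta,r}$ is only of order $r^{1-\eta}$, which is far too weak to close an $L^{2}$ curvature integral by a dyadic layer-cake summation. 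The stratum volume bound $\mathrm{Vol}\bigl(T_{r}(\mathcal{S}^{k}_{\eta,r})\bigr)\le C\,r^{\,n-k-\eta}$ is the content of the \emph{earlier} Cheeger--Naber quantitative-stratification paper and is not the new hard step here. The decisive new ingredient in \cite{CN} is a much stronger $\epsilon$-regularity theorem, proved via a slicing argument: under a \emph{two-sided} Ricci bound, any non-collapsed ball that is $(n-3,\eta)$-symmetric is in fact $(n,\eta')$-symmetric --- equivalently, there are no codimension-two or codimension-three singularities in non-collapsed limits with two-sided Ricci bounds. It is this theorem that forces the high-curvature set into $\mathcal{S}^{n-4}_{\eta,r}=\mathcal{S}^{0}_{\eta,r}$ and makes the codimension-four volume estimate relevant. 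Your outline omits this step entirely, so the pieces you list do not assemble into a proof even at the level of a sketch.
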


We then recall the following theorem on the end analysis in \cite{And1,BKN}.

\begin{thm}\label{thmasympflat} (\cite{And1,BKN})
Let $(M^n, g)$ ($n\geq 4$) be a complete noncompact Ricci flat Riemannian manifold satisfying that
\begin{align}
&\label{volume-growth}Vol(B_r)\geq C r^n\,\,\text{for all}\,\,r>0,\\
&\label{assump2}\int_M (|Rm|^{\frac{n}{2}} d V)[g] <\infty.
\end{align}
Then given $q\in M$, there is an $R_0>0$ such that
\begin{itemize}
\item $M \setminus B_{R_0}(q)$ is diffeomorphic to the cone $(R_0, \infty)\times S^{n-1}/\Gamma$;
\item $g  = g_F + O(r^{-2})$, where $g_F$ is the flat metric on the cone.
\end{itemize}
Moreover, if $M$ is simply connected at infinity, then $(M^n, \ g)$ is isometric to the Euclidean space.
\end{thm}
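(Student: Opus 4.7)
The plan is to establish the ALE structure in three stages — pointwise curvature decay, identification of the tangent cone at infinity via blow-down, and construction of a global ALE coordinate chart with quadratic decay — and then deduce Euclidean rigidity when $M$ is simply connected at infinity. For the curvature decay stage, the Ricci-flat hypothesis gives a Simons-type inequality $\Delta|\text{Rm}|^2 \geq -c_n|\text{Rm}|^3$, so Moser iteration on balls satisfying the uniform Sobolev inequality provided by \eqref{volume-growth} yields an $\varepsilon$-regularity estimate
\begin{equation*}
\sup_{B_{r/2}(x)} |\text{Rm}| \leq \frac{C}{r^2}\Bigl(\int_{B_r(x)} |\text{Rm}|^{n/2} \, dV\Bigr)^{2/n}
\end{equation*}
whenever the right-hand side is below a universal threshold. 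The absolute continuity of the finite measure $|\text{Rm}|^{n/2}\,dV$ guaranteed by \eqref{assump2} then forces this smallness condition on all unit balls centered sufficiently far from $q$, so $|\text{Rm}|(x) \to 0$ as $d(x,q) \to \infty$.

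For the blow-down stage, rescale by a sequence $\lambda_k \to \infty$. The pointed sequence $(M, \lambda_k^{-2}g, q)$ has $\text{Ric}=0$ and uniform non-collapsing, so Cheeger--Gromov compactness together with the curvature decay above gives $C^\infty_{\text{loc}}$-subconvergence off the basepoint to a smooth Ricci-flat limit $(X_\infty, g_\infty, q_\infty)$ of Euclidean volume growth. The Bishop--Gromov monotonicity of $r^{-n}\text{Vol}(B_r)$ for $\text{Ric}\geq 0$, together with its constancy in the rescaled limit, forces $X_\infty$ to be a metric cone; being smooth Ricci-flat, it is flat, so $X_\infty = \mathbb{R}^n/\Gamma$ for some finite $\Gamma \subset O(n)$ acting freely on $S^{n-1}$.

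The main obstacle is promoting this one-scale closeness, available at each $\lambda_k$, to a single ALE diffeomorphism on all of $M \setminus B_{R_0}(q)$, since a priori tangent cones at infinity could differ across subsequences or curvature could concentrate at intermediate scales. I would apply the $\varepsilon$-regularity on a geometric sequence of annuli $A(2^k, 2^{k+1})$ to obtain uniform harmonic radius lower bounds, build harmonic coordinates at infinity, and apply elliptic regularity to the Einstein equation written in those coordinates. The sharpened decay $|\text{Rm}|=O(r^{-n})$, obtainable from a Bianchi-identity monotonicity argument once tangent-cone uniqueness is in hand, integrates to $g - g_F = O(r^{-2})$ in the ALE chart, and a patching argument yields the claimed diffeomorphism $(R_0, \infty) \times S^{n-1}/\Gamma \to M\setminus B_{R_0}(q)$.

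Finally, if $M$ is simply connected at infinity, then $\pi_1\bigl((R_0, \infty) \times S^{n-1}/\Gamma\bigr) \cong \Gamma$ must be trivial (using $n \geq 4$ so $S^{n-1}$ is simply connected), whence $\Gamma = \{1\}$. The asymptotic volume ratio of $M$ then equals $\omega_n$, and the Bishop--Gromov monotonicity of $r^{-n}\text{Vol}(B_r(q))$ forces equality with the Euclidean ratio at every scale; the equality case gives the isometry $(M^n, g) \cong (\mathbb{R}^n, g_E)$.
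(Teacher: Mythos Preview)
The paper does not give its own proof of this theorem; it is stated with citation to Anderson and Bando--Kasue--Nakajima and used as a black box. Your outline is a correct and faithful summary of the approach in those references: the Ricci-flat Simons inequality plus the Sobolev inequality coming from Euclidean volume growth yields $\varepsilon$-regularity and hence $|\text{Rm}|=o(r^{-2})$; blow-down then converges smoothly off the vertex to a flat cone $\mathbb{R}^n/\Gamma$; harmonic-coordinate patching on dyadic annuli upgrades this to a single ALE chart with the stated decay; and when $\Gamma=\{1\}$ the Bishop--Gromov equality case gives the isometry with $\mathbb{R}^n$. The one place your sketch is thin is Stage~3 (uniqueness of the tangent cone and the actual construction of the global ALE chart with quantitative decay), which is precisely where most of the work in \cite{BKN} lies, but you have correctly flagged it as the main obstacle. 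One minor remark: $S^{n-1}$ is already simply connected for $n\geq 3$, so the hypothesis $n\geq 4$ is not what is being used at that step.
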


It turns out, as a straightforward consequence of Theorem \ref{Weylint4} of Cheeger and Naber,
one can drop the assumption \eqref{assump2} in Theorem \ref{thmasympflat} in dimension 4.

\begin{lem}\label{improved}
Let $(M^4, g)$ be a complete noncompact Ricci flat Riemannian 4-manifold satisfying the Euclidean volume growth assumption \eqref{volume-growth}.
Then, given $q\in M$, there is an $R_0>0$ such that
\begin{itemize}
\item $M \setminus B_{R_0}(q)$ is diffeomorphic to the cone $(R_0, \infty)\times S^3/\Gamma$;
\item $g  = g_F + O(r^{-2})$, where $g_F$ is the flat metric on the cone.
\end{itemize}
Moreover, if $M$ is simply connected at infinity, then $(M^4, \ g)$ is isometric to the Euclidean 4-space.
\end{lem}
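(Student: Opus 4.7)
The plan is to use Cheeger--Naber's Theorem \ref{Weylint4} to produce the missing global $L^2$-integrability of the Riemann tensor (hypothesis \eqref{assump2} with $n=4$), and then to invoke Theorem \ref{thmasympflat} unchanged.

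The first step is to upgrade the pointwise Euclidean volume bound \eqref{volume-growth} at $q$ to uniform non-collapsing at every point and every scale. Since $g$ is Ricci flat, Bishop--Gromov gives that $r \mapsto \mathrm{Vol}(B_r(p))/(\omega_4 r^4)$ is nonincreasing for each $p$, and that its $r\to\infty$ limit $\alpha$ is independent of $p$ (as two basepoints can be sandwiched by nested balls). The hypothesis at $q$ forces $\alpha>0$, hence $\mathrm{Vol}(B_r(p)) \geq \alpha\omega_4 r^4$ for every $p\in M$ and every $r>0$.

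The key step is then the scale invariance in dimension four of the quantity $|Rm|^2\, dV$. For arbitrary $p\in M$ and $r>0$, rescale $\tilde g = r^{-2} g$: this preserves Ricci flatness, identifies $B_1^{\tilde g}(p)$ with $B_r^{g}(p)$, and satisfies $\mathrm{Vol}_{\tilde g}(B_1^{\tilde g}(p)) \geq \alpha\omega_4$. Applying Theorem \ref{Weylint4} to $(M,\tilde g)$ at $p$ (with collapsing constant $v = \alpha\omega_4$) produces a bound $C=C(\alpha\omega_4)$ on the average of $|Rm(\tilde g)|^2_{\tilde g}$ over $B_1^{\tilde g}(p)$, which, by the scale invariance of $|Rm|^2\, dV$ in dimension four and the Bishop--Gromov upper bound $\mathrm{Vol}_{\tilde g}(B_1^{\tilde g}(p))\leq\omega_4$, translates to
$$
\int_{B_r^g(p)} |Rm|^2_g\, dV_g \;=\; \int_{B_1^{\tilde g}(p)} |Rm|^2_{\tilde g}\, dV_{\tilde g} \;\leq\; C\,\omega_4,
$$
uniformly in $r$ and $p$. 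Letting $r\to\infty$ and exhausting $M$ by such balls then yields $\int_M |Rm|^2\, dV_g<\infty$, which is exactly condition \eqref{assump2} in dimension four.

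With both hypotheses of Theorem \ref{thmasympflat} verified, I would simply invoke that theorem for $n=4$ to obtain the cone-like diffeomorphism type of the end, the expansion $g = g_F + O(r^{-2})$, and the Euclidean rigidity when $M$ is simply connected at infinity. There is no genuine analytic obstacle to overcome: the whole substance of the lemma is the observation that in dimension four the integral $\int|Rm|^2\, dV$ is scale invariant, so that the scale-invariant local average bound of Cheeger--Naber automatically promotes itself to a uniform bound on concentric balls of arbitrary radius, and hence to a finite global $L^2$-norm.
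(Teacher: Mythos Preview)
Your proposal is correct and follows essentially the same route as the paper: rescale by $g_R = R^{-2}g$, apply Cheeger--Naber (Theorem \ref{Weylint4}) to get a uniform bound on $\int_{B_R}|Rm|^2\,dV$ via the scale invariance of $|Rm|^2\,dV$ in dimension $4$, and then invoke Theorem \ref{thmasympflat}. Your preliminary step upgrading the volume lower bound from the fixed point $q$ to every basepoint via Bishop--Gromov is a harmless elaboration but is not actually needed, since it suffices to bound $\int_{B_R(q)}|Rm|^2\,dV$ uniformly in $R$ for balls centered at $q$ alone.
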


\begin{proof} In the light of Theorem \ref{thmasympflat} (\cite{And1,BKN}) it suffices to show that, there is a constant $C$ such that
$$
\int_{B_R} (|Rm|^2dv)[g] \leq C
$$
for any $R>0$. For any fixed $R>0$, we consider the metric $g_R = R^{-2}g$. It is easily verified that Theorem \ref{Weylint4} is applicable to $g_R$. Hence
the proof is complete.
\end{proof}

Theorem \ref{conformal-gap} is an improved version of the conformal gap theorem in \cite{CQY1} based on the above Lemma \ref{improved}.  But we will present a complete proof of
Theorem \ref{conformal-gap} that is slightly different from that in \cite{CQY1}. Our approach replies on the control of the Yamabe constant via \eqref{sigma-2}
and the following observation that describes the influence of the end structure from the Yamabe constant similar to that of the lower bound of the Euclidean volume growth.

\begin{lem}\label{trivialgroup}
Let $(M^n, \ g)$ ($n\geq 4$) be a complete noncompact Ricci flat Riemannian manifold. Assume \eqref{assump2} when $n \geq 5$. Then $(M^n, \ g)$ is isometric to the Euclidean
space $\mathbb{R}^n$, provided that
\begin{align}\label{inequlocSob}
Y(M, \ g) \geq \tau Y(\mathbb{R}^n, g_E)
\end{align}
for some $\tau > 2 ^{-\frac{2}{n}}$.
\end{lem}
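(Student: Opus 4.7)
The plan is to use the Yamabe hypothesis in two rounds: first to extract an Euclidean volume lower bound that feeds Theorem \ref{thmasympflat}, and then to re-enter the same hypothesis against the ALE model in order to force the asymptotic deck group to be trivial. Since $\mathrm{Ric}[g]=0$, the scalar curvature term drops out of the Yamabe functional and the assumption $Y(M,g) \geq \tau Y(\mathbb{R}^n, g_E)$ is just a uniform $L^2$-Sobolev inequality on $M$.

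First I would pass from this Sobolev inequality to an isoperimetric inequality $|\partial \Omega| \geq c\,|\Omega|^{(n-1)/n}$ by the standard Federer--Fleming co-area argument, and then, applied to metric balls $\Omega = B_r(p)$ together with $V'(r) \geq |\partial B_r(p)|$, integrate the resulting differential inequality $V' \geq c\, V^{(n-1)/n}$ to obtain the Euclidean volume growth $\mathrm{Vol}(B_r(p)) \geq c_0 r^n$ for some $c_0 > 0$. With this volume growth in hand, together with \eqref{assump2} when $n \geq 5$ or automatically via Lemma \ref{improved} when $n=4$, Theorem \ref{thmasympflat} produces a radius $R_0$ and a finite subgroup $\Gamma \subset O(n)$ acting freely on $\mathbb{S}^{n-1}$ such that $M \setminus B_{R_0}(q)$ is diffeomorphic to $(R_0,\infty) \times \mathbb{S}^{n-1}/\Gamma$ and $g = g_F + O(r^{-2})$.

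Second, I would establish the matching upper Yamabe bound $Y(M,g) \leq |\Gamma|^{-2/n} Y(\mathbb{R}^n, g_E)$. The model computation is to transplant the rotationally symmetric (hence $\Gamma$-invariant) Talenti--Aubin bubble $U(x)=(1+|x|^2)^{-(n-2)/2}$: descended to $\mathbb{R}^n/\Gamma$, both $\int U^{2^*}$ and $\int |\nabla U|^2$ divide by $|\Gamma|$, producing a factor $(1/|\Gamma|)^{1-2/2^*} = |\Gamma|^{-2/n}$ in the Yamabe ratio. I would then transfer this to $M$ by placing a rescaled, cut-off copy of $U$ into the asymptotic annulus $\{r \sim R\}$ of the ALE end, where the metric deviation $g - g_F = O(R^{-2})$ is negligible; by scale-invariance of the Yamabe ratio and a standard error estimate this gives a test function on $M$ whose Yamabe ratio is at most $|\Gamma|^{-2/n} Y(\mathbb{R}^n, g_E) + o(1)$ as $R \to \infty$. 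Combining with the hypothesis forces $|\Gamma|^{-2/n} \geq \tau > 2^{-2/n}$, i.e.\ $|\Gamma| < 2$, so $\Gamma$ is trivial. The ALE end is then simply connected at infinity, and the last clause of Theorem \ref{thmasympflat} yields the isometry $(M, g) \cong (\mathbb{R}^n, g_E)$.

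The main obstacle is the upper Yamabe bound in the second stage: the $\Gamma$-invariant Aubin bubble lives on the model cone, not on $M$, so one must carefully balance the bubble scale against both the curvature decay $O(r^{-2})$ and the tail loss from cutting off a function whose decay is only $|x|^{-(n-2)}$, in order to guarantee that the pulled-back test function achieves a Yamabe ratio within $o(1)$ of the cone value. The volume-growth step via Federer--Fleming and the $\Gamma$-quotient computation of the cone Yamabe ratio are essentially routine once the geometric setup is fixed.
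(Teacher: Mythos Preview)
Your proposal is correct and follows the same overall strategy as the paper: derive Euclidean volume growth from the Sobolev inequality (the paper simply cites Lemma~3.2 in \cite{Hebey}, which is exactly your Federer--Fleming argument), apply Theorem~\ref{thmasympflat}/Lemma~\ref{improved} to obtain the ALE cone $\mathbb{R}^n/\Gamma$ at infinity, and then bound $Y(M,g)$ above by $|\Gamma|^{-2/n}Y(\mathbb{R}^n,g_E)$ using the descended radial Aubin bubble to force $|\Gamma|<2$.

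The one place where your route differs is in how the upper Yamabe bound is realized. The paper passes to the tangent cone at infinity via the blow-down sequence $(M,\lambda_i^2 g,p)$ with $\lambda_i\to 0$, shows that the Yamabe quotient is lower semicontinuous under this Cheeger--Gromov limit (using a cut-off near the cone vertex to handle the singularity), and then computes the quotient of the radial bubble on the exact cone $\mathbb{R}^n/\Gamma$. You instead stay on $M$ and transplant a rescaled, cut-off radial bubble directly into the ALE end, controlling the $O(r^{-2})$ metric error and the tail losses by hand. These are really the same computation viewed through the scale-invariance of the Yamabe functional: shrinking $M$ versus dilating the bubble. The paper's version trades your concentration/cut-off estimate for a limit argument plus a vertex cut-off; neither is materially simpler, and both land on the same inequality $Y(M,g)\le |\Gamma|^{-2/n}Y(\mathbb{R}^n,g_E)$.
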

\begin{proof} First of all, according to Lemma 3.2 in \cite{Hebey}, one indeed has a lower bound for the Euclidean volume growth from the lower bound of the Yamabe constant.
Then from our Lemma \ref{improved} and the arguments in \cite{And1,BKN} it is known that the tangent cone $(M^n_\infty, \ g_\infty)$  at infinity for the Ricci flat manifold
$(M^n, \ g)$ is a flat cone $\mathbb{R}^n/\Gamma$. Because the blow-down: the rescaled manifolds $(M^n, \ \lambda_i^2 g, p)$,  as $\lambda_i\to 0$ with a fixed point $p\in M^n$,
converges to the tangent cone $(M_\infty, \ g_\infty, p_\infty)$ at infinity in Cheeger-Gromov topology away from the singular point $p_\infty$ (cf. \cite{And1,BKN}).
It is rather easily seen from \eqref{yamabe} that the Yamabe quotient of the blow-down sequence converges to that of the tangent cone
at infinity for any smooth function with compact support away from the vertex of the cone. For a smooth function with compact support in general, we simply use cut-off functions to modify
it as follows. For any small $s> 0$, we consider cut-off functions:
$$
\left\{\aligned \phi  & = 0 \text{ when } r\leq s\\
\phi & = 1 \text{ when } r\geq 2s\\
\phi & \in C^\infty\endaligned\right.
$$
We may require that $|\nabla \phi|\leq Cs^{-1}$ for some constant $C$. Then, for a function $u$ with compact support, which is smooth away from the vertex and Lipschitz across the vertex,
we calculate
$$
\aligned
\int_{\mathbb{R}^n/\Gamma} |\nabla u|^2 & = \int_{\mathbb{R}^n/\Gamma} |\nabla (\phi u)|^2 + \int_{\mathbb{R}^n/\Gamma}
(2\nabla (\phi u)\cdot \nabla((1 - \phi) u) + |\nabla ((1 - \phi) u)|^2)\\
& = \int_{\mathbb{R}^n/\Gamma} |\nabla (\phi u)|^2 + O(s^{n-2})\endaligned
$$
and
$$
\int_{\mathbb{R}^n/\Gamma} u^\frac {2n}{n-2} = \int_{\mathbb{R}^n/\Gamma} (\phi u)^\frac{2n}{n-2} + O(s^n)
$$
as $s\to 0$.
Therefore, due to the scaling invariance of the Yamabe constant, we also know that
$$
Y(M^n_\infty, \ g_\infty) \geq \liminf Y(M, \lambda_i g) \geq \tau Y(\mathbb{R}^n, g_E)
$$
for some $\tau > 2^{-\frac 2n}$ from our assumption \eqref{inequlocSob}. Suppose that
$$
D_\Gamma: \mathbb{R}^n\to \mathbb{R}^n/\Gamma
$$
is the desingularization of the cone metric. Then it is easily seen that
\begin{equation}\label{quotient}
\frac {\int_{\mathbb{R}^n} \frac {4(n-1}{n-2}(|\nabla u\circ D_\Gamma|^2dv)[g_E]}{(\int_{\mathbb{R}^n}(u\circ D_\Gamma)^\frac {2n}{n-2}dv[g_E])^\frac {n-2}n}  =  2^{\frac 2n}
\frac {\int_{M^n_\infty} \frac {4(n-1}{n-2}(|\nabla u|^2dv)[g_\infty]}{(\int_{M^n_\infty} u^\frac {2n}{n-2}dv[g_\infty])^\frac {n-2}n}.
\end{equation}
Hence, when using appropriately modifications of the standard function
$$
u \circ D_\Gamma= (\frac 2{1 + |x|^2})^\frac {n-2}2
$$
in \eqref{quotient}, one easily gets
$$
Y(M^n_\infty, \ g_\infty) \leq |\Gamma|^{-\frac 2n}Y(\mathbb{R}^n, \ g_E),
$$
which,  comparing to \eqref{inequlocSob}, forces the group $\Gamma$ to be trivial and the Ricci flat manifold $(M^n, \ g)$ to be isometric to the
Euclidean space $(\mathbb{R}^n, \ g_E)$ according Lemma \ref{improved} (\cite{CN}) in dimension 4 and Theorem \ref{thmasympflat} (\cite{And1,BKN})
in general dimensions. So the proof is complete.
 \end{proof}

Now we are ready to give the proof of our Theorem \ref{conformal-gap}:

\vskip 0.1in\noindent{\it Proof of Theorem \ref{conformal-gap}}: \quad
If not, then there exists a sequence of Bach flat 4-manifolds $(M^4_j, \ (g_Y)_j)$, that are not conformally equivalent to the round 4-sphere,  where $(g_Y)_j$ is the Yamabe metric
with $R[(g_Y)_j]=12$ and
$$
\int_M (\sigma_2(A) dv)[(g_Y)_j] = (1 - \epsilon_j)^2 16\pi^2 \to 16\pi^2$$
as $j\to \infty$.  Hence, in the light of \eqref{sigma-2}, we have
\begin{equation}\label{Sobolev-control}
(1 - \epsilon_j)Y(\mathbb{S}^n, \ [g_{\mathbb{S}}]) \leq Y(M^4_j, [(g_Y)_j]) \leq Y(\mathbb{S}^n, \ [g_{\mathbb{S}}])
\end{equation}
and
\begin{equation}\label{ricci-control}
\int_{M_j}(|\overset{\circ}{\text{Ric}}|^2dv)[(g_Y)_j] \leq \frac 1{12}\epsilon_j\to 0.
\end{equation}
Here we use the fact that, on the round sphere $(\mathbb{S}^4, g_{\mathbb{S}})$,
\begin{align*}
\int_{S^4} (\sigma_2(A) dv)[g_{\mathbb{S}}] = \frac{1}{24}Y(\mathbb{S}^4, \ [g_{\mathbb{S}}])^2.
\end{align*}
From the argument in \cite{CQY1}, to get a contradiction it suffices to show that $M^4_j$ is diffeomorphic to $\mathbb{S}^4$ for a subsequence of $j$.
More precisely, here we use the following interesting result from \cite{CQY1}:

\begin{lem}\label{from-cqy} (\cite{CQY1}) There is $\epsilon > 0$ such that a Bach flat metric $g$ on 4-sphere $\mathbb{S}^4$ is conformal to the standard round metric
$g_{\mathbb{S}}$, provided that
$$
\int_{\mathbb{S}^4} (|W|^2dv)[g] \leq \epsilon.
$$
\end{lem}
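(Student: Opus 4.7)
The plan is a compactness and contradiction argument. Suppose the assertion fails: there is a sequence of Bach-flat metrics $g_j$ on $\mathbb{S}^4$, none in the conformal class $[g_{\mathbb{S}}]$, with $\int_{\mathbb{S}^4}(|W|^2 dv)[g_j]\to 0$. Because Bach-flatness and the integral $\int|W|^2 dv$ are conformally invariant in dimension four, I replace each $g_j$ by a Yamabe representative $g_{Y,j}\in[g_j]$ of unit volume; the hypotheses are preserved. Feeding the identity $\sigma_2(A)=\tfrac{1}{24}R^2-\tfrac{1}{2}|\overset{\circ}{\text{Ric}}|^2$ into the Gauss-Bonnet formula
\[
16\pi^2 = 8\pi^2\chi(\mathbb{S}^4) = \tfrac{1}{4}\int_{\mathbb{S}^4}(|W|^2 dv)[g_{Y,j}] + \int_{\mathbb{S}^4}(\sigma_2(A) dv)[g_{Y,j}]
\]
and combining with the inequality \eqref{sigma-2} and Aubin's bound $Y(\mathbb{S}^4,[g_j])\leq Y(\mathbb{S}^4,[g_{\mathbb{S}}])$ forces $Y(\mathbb{S}^4,[g_j])\to Y(\mathbb{S}^4,[g_{\mathbb{S}}])$ and $\int_{\mathbb{S}^4}(|\overset{\circ}{\text{Ric}}|^2 dv)[g_{Y,j}]\to 0$, while $R[g_{Y,j}]$ stays uniformly bounded.

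Next I upgrade this integral smallness to a pointwise curvature bound via Tian-Viaclovsky's $\epsilon$-regularity (Lemma \ref{TV-curvature}). The orthogonal decomposition in dimension four gives $|\text{Rm}|^2 = |W|^2 + c_1|\overset{\circ}{\text{Ric}}|^2 + c_2 R^2$, so on any geodesic ball $B_{2r}(p)$ one has
\[
\int_{B_{2r}(p)}(|\text{Rm}|^2 dv)[g_{Y,j}] \leq \int_{\mathbb{S}^4}|W|^2 dv + c_1\int_{\mathbb{S}^4}|\overset{\circ}{\text{Ric}}|^2 dv + c_2 R^2\,\text{Vol}(B_{2r}(p)).
\]
The first two terms tend to zero, and the volume term is $O(r^4)$ by standard volume comparison. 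For $r$ small and $j$ large the right side falls below the $\epsilon$-threshold $\tau_0$, producing a uniform pointwise bound $|\text{Rm}|[g_{Y,j}]\leq C$. Volume non-collapse follows from the near-maximal Yamabe constant through the Sobolev inequality. Cheeger-Gromov compactness, together with elliptic regularity for the coupled Bach and constant-scalar-curvature system, then yields a subsequential $C^\infty$-limit $g_\infty$ on $\mathbb{S}^4$ (modulo diffeomorphisms). Smooth convergence promotes $W_j\to 0$ and $\overset{\circ}{\text{Ric}}_j\to 0$ in $L^2$ to $W_\infty\equiv 0$ and $\overset{\circ}{\text{Ric}}_\infty\equiv 0$ pointwise, so $g_\infty$ is a locally conformally flat Einstein metric on $\mathbb{S}^4$ with positive constant scalar curvature. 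By Kuiper's theorem combined with Obata rigidity, $g_\infty$ is (a rescaling of) the round metric $g_{\mathbb{S}}$.

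The main obstacle is the final step: upgrading "$g_{Y,j}$ is $C^\infty$-close to $g_{\mathbb{S}}$ modulo diffeomorphisms" to "$[g_{Y,j}]=[g_{\mathbb{S}}]$." For this I would invoke a local rigidity result in the moduli of Bach-flat conformal structures at the round sphere. The linearization of the Bach operator at $g_{\mathbb{S}}$, restricted to a slice transverse to the orbit of the conformal group and the diffeomorphism group, is a fourth-order self-adjoint elliptic operator whose kernel reduces to the tangents of those orbits; this is essentially a spectral computation for a Paneitz-type operator on the round $\mathbb{S}^4$. An implicit function theorem in this slice then shows that every Bach-flat metric in a sufficiently small $C^\infty$-neighborhood of $g_{\mathbb{S}}$ is conformal to $g_{\mathbb{S}}$. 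For large $j$ this applies to $g_{Y,j}$, contradicting $[g_j]\neq[g_{\mathbb{S}}]$ and proving the lemma.
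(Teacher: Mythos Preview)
The paper does not give its own proof of this lemma; it is quoted verbatim as a result of \cite{CQY1} and used as a black box inside the proof of Theorem~\ref{conformal-gap}. So there is nothing in the paper to compare your argument against line by line.

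That said, your compactness-plus-local-rigidity outline is exactly the strategy behind the result in \cite{CQY1}. The reduction to Yamabe representatives, the Gauss--Bonnet bookkeeping forcing $Y(\mathbb{S}^4,[g_j])\to Y(\mathbb{S}^4,[g_{\mathbb{S}}])$ and $\int|\overset{\circ}{\text{Ric}}|^2\to 0$, and the Tian--Viaclovsky $\epsilon$-regularity to get uniform curvature bounds are all standard and correct. The only substantive step is the last one, and you have identified it honestly: upgrading $C^\infty$-closeness to the round metric to conformal equivalence requires knowing that the round conformal class is isolated among Bach-flat conformal structures on $\mathbb{S}^4$. Your sketch via the linearized Bach operator in a slice is the right mechanism, and this is precisely the content supplied by \cite{CQY1}; it is not something one can wave through, but your description of what needs to be checked (triviality of the kernel modulo conformal and diffeomorphism directions) is accurate.
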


The rest of the proof of Theorem \ref{conformal-gap} follows from a more or less standard rescaling argument based on our Lemma \ref{trivialgroup}. We first derive a contradiction if
there was curvature blow-up. Again, we use Lemma 3.2 in \cite{Hebey} to get the uniform lower bound on the Euclidean volume growth for such sequence of manifolds.
Then, one stands at the point of curvature blow-up, that is, $p_j\in M_j$ such that
$$
\lambda_j = |Rm|(p_j)[(g_Y)_j] = \max_{M_j} |Rm|[(g_Y)_j] \to \infty
$$
and considers the sequence of pointed Riemannian manifold $(M_j, g_j, p_j)$ with the rescaled metric $g_j = \lambda^2_j (g_Y)_j$. Therefore, according to the curvature
estimates established, for example, in \cite{Tian-Viaclovsky}, one derives a subsequence that converges to complete non-compact  manifold $(M_\infty, g_\infty, p_\infty)$ in
Cheeger-Gromov topology. As the consequences of \eqref{Sobolev-control} and \eqref{ricci-control} one knows that
\begin{itemize}
\item $Y(M_\infty, g_\infty) = Y(\mathbb{S}^n, [g_{\mathbb{S}}])  =  Y(\mathbb{R}^n, g_E)$ and
\item $\overset{\circ}{\text{Ric}}[g_\infty] = 0$ and $R[g_\infty] = 0$.
\end{itemize}
Here we use the argument similar to that in the proof in Lemma \ref{trivialgroup} to derive the equality $Y(M_\infty, g_\infty) = Y(\mathbb{R}^n, g_E)$.
Therefore $(M_\infty, g_\infty)$ is isometric to the Euclidean 4-space according to Lemma \ref{trivialgroup} in the same time $|Rm|(p_\infty)[g_\infty] = 1$, which is a contradiction.
On the other hand, if there is no curvature blow-up for the sequence $(M_j, \ (g_Y)_j)$, then, the same compactness argument implies that there would be a subsequence
that converges to the round sphere in Cheeger-Gromov topology, which is impossible due to Lemma \ref{from-cqy}. Thus the proof of Theorem \ref{conformal-gap} is complete.
\qed \\

Next we use the facts collected in the section of preliminaries to give a proof of Theorem \ref{CGT}:

\vskip 0.1in\noindent{\it Proof of Theorem \ref{CGT}}: \quad
Since $g^{(3)}=0$, from the expansion \eqref{metric-expansion},  the doubling
$$(X_D, \ \tilde g) = (\bar X\bigcup \bar X, \bar g)$$ is a smooth Bach flat 4-manifold
(for more details about the doubling please see \cite{CQY2}, which uses \cite{Chang-Gursky-Yang}).  In fact, it is also shown in \cite{CQY2} that the doubling
$(X_D, \tilde g)$ is of positive Yamabe type from the assumption that the conformal infinity $(\partial X^3, [\hat g])$ is of positive Yamabe type.
In the mean time we recall from \eqref{sigma-2-gauss} that
$$
\int_{X_D} (\sigma_2(A)dv)[\tilde g] = 2 \int_X (\sigma_2(A)dv)[\bar g] = 12V(X^4, \ g^+)
$$
to conclude that
$$
\int_{X_D} (\sigma_2(A)dv)[\tilde g] \geq (1 - \epsilon)16\pi^2
$$
by the assumption \eqref{volume-gap}. Now one applies Theorem \ref{conformal-gap} to the doubling $(X_D, \ \tilde g)$ and derives that $(X_D, \tilde g)$ is conformally equivalent to
the round 4-sphere when $\epsilon$ is sufficiently small. Particularly one obtains that $(X^4, \ g^+)$ is a simply connected Riemannian manifold of constant curvature $-1$
(cf. \cite{CQY2}) when $\epsilon$ is sufficiently small, which completes the proof.
\qed
\\

To derive Theorem \ref{curvature-bound} we carry the above rescaling scheme with Lemma \ref{trivialgroup} on conformally compact Einstein manifolds.
In fact we are able to derive the curvature bound for conformally compact
Einstein manifolds in general dimensions, to which Theorem \ref{curvature-bound} is a corollary.

\begin{thm} For a constant $B$ and a constant $\tau > 2^{-\frac 2n}$ , there exists a constant  $C=C(n, \tau, B) >0$ such that
\begin{equation}\label{bound}
|Rm| \leq C,
\end{equation}
for any conformally compact Einstein manifold $(X^{n}, \ g^+)$ ($n\geq 4$) with
\begin{itemize}
\item $Y(X^{n}, \ g^+) \geq \tau Y(\mathbb{H}^{n}, \ g_{\mathbb{H}})$ and
\item $\int_{X^{n}} (|W|^\frac {n}2dv)[g^+] \leq B$ when $n\geq 5$.
\end{itemize}
\end{thm}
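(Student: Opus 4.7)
The plan is to argue by contradiction through a blow-up rescaling, exactly parallel to the one used in the proof of Theorem \ref{conformal-gap}, with Lemma \ref{trivialgroup} playing the role of the final obstruction. Suppose no such $C$ exists; then there is a sequence of conformally compact Einstein manifolds $(X_j^n, g_j^+)$ satisfying both hypotheses, together with points $p_j\in X_j$ for which $\lambda_j^2 := |Rm|(p_j)[g_j^+] \to \infty$. A standard Hamilton-type point-picking argument allows me to choose $p_j$ so that on a $g_j^+$-geodesic ball around $p_j$ of radius comparable to $\lambda_j^{-1} R_j$ with $R_j \to \infty$, the curvature stays bounded by $2\lambda_j^2$. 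Rescaling by $\tilde g_j = \lambda_j^2 g_j^+$ one has $|\widetilde{Rm}|(p_j)[\tilde g_j] = 1$, $|\widetilde{Rm}|[\tilde g_j] \leq 2$ on a $\tilde g_j$-ball around $p_j$ of radius $R_j \to \infty$, and the Einstein condition reads $\text{Ric}[\tilde g_j] = -(n-1)\lambda_j^{-2}\tilde g_j \to 0$.

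Both hypotheses survive the rescaling: the Yamabe constant is scale invariant, so
\[
Y(X_j, \tilde g_j) \geq \tau Y(\mathbb{H}^n, g_{\mathbb{H}}) = \tau Y(\mathbb{R}^n, g_E),
\]
and for $n\geq 5$ the conformally invariant integral satisfies $\int_{X_j}(|W|^{n/2}dv)[\tilde g_j] \leq B$. As in the proof of Theorem \ref{conformal-gap}, the local Yamabe lower bound yields a uniform Sobolev inequality, hence a uniform Euclidean lower bound on small ball volumes via Lemma 3.2 of \cite{Hebey}, and together with the curvature bound, a uniform injectivity radius lower bound at $p_j$. Cheeger-Gromov compactness combined with the harmonic-coordinate regularity theory for Einstein metrics from \cite{And1} produces a subsequence converging smoothly to a complete pointed noncompact Riemannian manifold $(M_\infty^n, g_\infty, p_\infty)$ that is Ricci flat and satisfies $|Rm|(p_\infty)[g_\infty] = 1$.

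It remains to verify the hypotheses of Lemma \ref{trivialgroup} for $(M_\infty, g_\infty)$. The passage of the Yamabe lower bound to the limit follows by the same cutoff truncation already used inside the proof of Lemma \ref{trivialgroup} (compactly supported test functions pull back to $X_j$ via the convergence diffeomorphisms), giving $Y(M_\infty, g_\infty) \geq \tau Y(\mathbb{R}^n, g_E)$ with $\tau > 2^{-2/n}$. For $n \geq 5$ the Ricci-flatness of the limit forces $|Rm_\infty| = |W_\infty|$ pointwise, and Fatou applied to the smooth local convergence yields $\int_{M_\infty}(|Rm|^{n/2}dv)[g_\infty] \leq B < \infty$, which is exactly \eqref{assump2}. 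Lemma \ref{trivialgroup} then forces $(M_\infty, g_\infty)$ to be isometric to Euclidean $(\mathbb{R}^n, g_E)$, contradicting $|Rm|(p_\infty) = 1$.

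I expect the main technical obstacle to be running the Cheeger-Gromov compactness uniformly across the sequence: the Hamilton-style point-picking must produce a $\tilde g_j$-neighborhood of $p_j$ of radius diverging to infinity on which $|\widetilde{Rm}|$ stays bounded, so that the limit is genuinely complete, and the local volume noncollapsing must be extracted from the Yamabe constant hypothesis rather than postulated, so that the limit does not collapse in dimension. Once these are secured, the application of Lemma \ref{trivialgroup} is immediate.
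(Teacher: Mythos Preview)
Your argument is correct and follows essentially the same blow-up/contradiction scheme as the paper, ending with Lemma \ref{trivialgroup}. The only notable difference is in point selection: the paper observes that a conformally compact Einstein manifold is asymptotically hyperbolic, so $|Rm|[g_j^+]$ tends to a finite constant at infinity and therefore attains a global maximum at some $p_j$, which makes the Hamilton-type point-picking unnecessary and gives the global curvature bound $|\widetilde{Rm}|[\tilde g_j]\leq 1$ on all of $X_j$ for free; your more general point-picking works too but is heavier than needed here.
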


\begin{proof} Suppose otherwise that there is a sequence of conformally compact Einstein manifolds $(X^n_j, \ g^+_j)$ satisfying the assumptions in the theorem with curvature
blowing up. Since conformally compact Einstein manifolds are always asymptotically hyperbolic, we may extract a sequence of points $p_j\in X^n_j$ such that
$$
\lambda_j = |Rm|(p_j)[g^+_j] = \max_{X^n_j} |Rm|[g^+_j] \to \infty,
$$
and consider the pointed rescaled manifolds $(X^n_j, \ \lambda_j g^+_j, p_j)$. It is then easily seen that there is a subsequence $(X^n_j, g^+_j, p_j)$
converges in Cheeger-Gromov topology to a complete non-compact Ricci flat manifold $(X^n_\infty, g_\infty, p_\infty)$ with
\begin{itemize}
\item $Y(X^{n}_\infty, \ g_\infty) \geq \tau Y(\mathbb{H}^n, g_{\mathbb{H}}) = \tau Y(\mathbb{R}^{n}, \ g_E)$;
\item $|Rm|(p_\infty)[g_\infty] = 1$;
\item $\int_{X^{n}_\infty} (|W|^\frac {n}2dv)[g_\infty] \leq B$ when $n\geq 5$,
\end{itemize}
which is a contradiction in the light of Lemma \ref{trivialgroup}.
\end{proof}

\begin{cor} For $\epsilon < \frac 12$, there exists a number $C>0$ such that
$$
|Rm|[g^+] \leq C,
$$
for any conformally compact 4-manifold $(X^4, \ g^+)$ with conformal infinity of positive Yamabe type and
$$
V(X^4, \ g^+) \geq (1 - \epsilon)\frac {4\pi^2}3 = (1-\epsilon)V(\mathbb{H}^4, \ g_{\mathbb{H}}).
$$
\end{cor}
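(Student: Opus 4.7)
The plan is to deduce the corollary from the general-dimension curvature bound proved immediately above, whose only nontrivial hypothesis in dimension four is $Y(X^4,g^+) \geq \tau\, Y(\mathbb{H}^4,g_{\mathbb{H}})$ for some $\tau > 2^{-1/2}$ (the $|W|^{n/2}$ integrability condition is invoked only for $n \geq 5$). Fix a representative $\hat g$ of the conformal infinity with positive scalar curvature, its geodesic defining function $x$, and the compactification $\bar g = x^2 g^+$. Form the doubling $(X_D, \tilde g)$ of $(\overline{X^4},\bar g)$ along $\partial X^3$ as in the proof of Theorem \ref{CGT}. Although $\tilde g$ is only $C^{1,1}$ at the seam when $g^{(3)} \neq 0$, this regularity is still sufficient for the Yamabe and Chern--Gauss--Bonnet analysis on $X_D$, and as in \cite{CQY2} one verifies that $(X_D,[\tilde g])$ is of positive Yamabe type.

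The next step is to extract a lower bound for $Y(X_D,[\tilde g])$ from $V(X^4, g^+)$. Doubling \eqref{sigma-2-gauss} across $\partial X^3$ gives
\begin{equation*}
\int_{X_D}(\sigma_2(A)\,dv)[\tilde g] = 12\, V(X^4, g^+).
\end{equation*}
On a closed $4$-manifold, $\int \sigma_2(A)\,dv$ is a conformal invariant: $\int |W|^2\,dv$ is conformally invariant in dimension four and $\int \sigma_2(A)\,dv + \tfrac14 \int |W|^2\,dv = 8\pi^2 \chi(X_D)$. Hence the same value $12V$ is realized on the Yamabe metric $g_Y \in [\tilde g]$, and evaluating the pointwise identity used in \eqref{sigma-2} on $g_Y$ yields
\begin{equation*}
12\,V(X^4, g^+) \;\leq\; \tfrac{1}{24}\, Y(X_D,[\tilde g])^2.
\end{equation*}
Combined with $Y(\mathbb{S}^4,[g_{\mathbb{S}}])^2 = 384 \pi^2$ and $V(X^4,g^+) \geq (1-\epsilon)\tfrac{4\pi^2}{3}$, this rearranges to $Y(X_D,[\tilde g]) \geq \sqrt{1-\epsilon}\, Y(\mathbb{H}^4, g_{\mathbb{H}})$.

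Finally, I transfer the bound to the local Yamabe constant of $g^+$. Extension by zero identifies $C_c^\infty(X^4)$ with a subset of the admissible test functions on the closed manifold $X_D$, so $Y(X^4,\bar g) \geq Y(X_D,[\tilde g])$. Because the local Yamabe functional on a non-compact manifold is invariant under the substitution $u \mapsto x^{-1}u$ associated with $\bar g = x^2 g^+$, one has $Y(X^4, g^+) = Y(X^4,\bar g)$; therefore
\begin{equation*}
Y(X^4, g^+) \;\geq\; \sqrt{1-\epsilon}\; Y(\mathbb{H}^4, g_{\mathbb{H}}).
\end{equation*}
Since $\epsilon < \tfrac12$, $\tau := \sqrt{1-\epsilon} > 2^{-1/2}$, and the preceding curvature-bound theorem applies to yield the desired uniform bound $|Rm|[g^+] \leq C$. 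The main technical point to handle carefully is the regularity of $\tilde g$ at the seam when $g^{(3)} \neq 0$: one may either work directly with the $C^{1,1}$ doubling (for which the Yamabe problem and Chern--Gauss--Bonnet both remain valid) or approximate by smooth doublings, and in either case the Yamabe estimate above is stable under such approximation.
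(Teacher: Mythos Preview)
Your proposal is correct and follows essentially the same route as the paper: double the compactification, use \eqref{sigma-2-gauss} and \eqref{sigma-2} to bound $Y(X_D,[\tilde g])$ from below by $\sqrt{1-\epsilon}\,Y(\mathbb{S}^4,[g_{\mathbb{S}}])$, pass to $Y(X^4,g^+)$ by conformal invariance, and invoke the preceding theorem with $\tau=\sqrt{1-\epsilon}>2^{-1/2}$. One small correction: with the Fefferman--Graham expansion $g_x=\hat g+g^{(2)}x^2+g^{(3)}x^3+\cdots$, the doubled metric near the seam looks like $dx^2+\hat g+g^{(2)}x^2+g^{(3)}|x|^3+\cdots$, which is $C^{2}$ (indeed $C^{2,1}$), not merely $C^{1,1}$; this makes the curvature and $\sigma_2$ computations on $X_D$ unproblematic and your caveat unnecessary.
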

\begin{proof} It suffices to verify that
\begin{equation}\label{sobolev-need}
Y(X^4, g^+) \geq (1-\epsilon)^{\frac 12}Y(\mathbb{H}^4, g_{\mathbb{H}}).
\end{equation}
From \eqref{Sobolev-control} in the proof of Lemma \ref{trivialgroup} we know
$$
Y(X_D, \ \tilde g) \geq (1-\epsilon)^\frac 12 Y(\mathbb{H}^4, \ g_{\mathbb{H}}),
$$
which implies \eqref{sobolev-need} by the conformal invariance of the Yamabe constant.
\end{proof}

Clearly, the local Yamabe constant $Y(X^4, \ g^+)$ for a conformally compact Einstein 4-manifold approaches that of the hyperbolic space as the renormalized
volume $V(X^4, \ g^+)$ approaches that of the hyperbolic space. Before we end this section we state an easy observation based on the construction of Aubin
on the impact to local Yamabe constant from local geometry in higher dimensions (cf. Paragraph 6.10 in \cite{Aubin, LP}).

\begin{prop} \label{higher-dim} For any $\epsilon >0$, there is $\delta >0$ such that, for any section curvature $K$ at any point
on any conformally compact Einstein manifold $(X^n, \ g^+)$ ($n\geq 6$), one has
$$
|K + 1| \leq \epsilon,
$$
provided that
$$
Y(X^n, \ g^+) \geq (1- \delta)Y(\mathbb{H}^n, \ g_{\mathbb{H}}).
$$
\end{prop}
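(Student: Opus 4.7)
The plan is to argue by contradiction, combining a rescaling and Cheeger--Gromov extraction with Aubin's classical local upper bound on the Yamabe quotient at a point of nonvanishing Weyl curvature, which produces the strict inequality $Y<Y(\mathbb{S}^n)$ precisely in dimensions $n\geq 6$.

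First I would set up the contradiction. Suppose the proposition fails for some $\epsilon_0>0$: there is a sequence of conformally compact Einstein manifolds $(X^n_j,g^+_j)$, $n\geq 6$, with $Y(X^n_j,g^+_j)\to Y(\mathbb{H}^n,g_{\mathbb{H}})$ but points $p_j\in X^n_j$ at which some sectional curvature satisfies $|K_{p_j}+1|\geq\epsilon_0$. Because $\mathrm{Ric}[g^+_j]=-(n-1)g^+_j$, the Schouten tensor equals $-\frac{1}{2}g^+_j$ and $R_{ikjl}=W_{ikjl}-(g_{ij}g_{kl}-g_{il}g_{jk})$, so $K+1=W(e_1,e_2,e_1,e_2)$ for any orthonormal basis of the plane; hence $|W(p_j)|_{g^+_j}\geq\epsilon_0$. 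By Lemma \ref{lem:cur-decay}, $|W|_{g^+_j}=O(x^2)$ near conformal infinity, so $|W|$ attains an interior maximum at some $q_j$ with $|W(q_j)|\geq\epsilon_0$. Setting $\lambda_j=|W(q_j)|_{g^+_j}^{1/2}$ and $\tilde g_j=\lambda_j^2 g^+_j$, the Weyl scaling gives $|\tilde W|_{\tilde g_j}\leq 1$ globally with equality at $q_j$, and $\mathrm{Ric}[\tilde g_j]=-(n-1)\lambda_j^{-2}\tilde g_j$, so the Riemann tensor of $\tilde g_j$ is uniformly bounded. The Yamabe quotient is invariant under constant rescaling, so $Y(\tilde g_j)\to Y(\mathbb{S}^n,[g_{\mathbb{S}}])$.

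Next, Lemma 3.2 of \cite{Hebey} delivers uniform non-collapsing from the Yamabe lower bound, and Cheeger's injectivity radius lemma furnishes a uniform lower bound on $\mathrm{inj}(q_j,\tilde g_j)$. A subsequence of $(\tilde X^n_j,\tilde g_j,q_j)$ converges in pointed $C^{1,\alpha}$ Cheeger--Gromov topology to a complete Einstein manifold $(\tilde X^n_\infty,\tilde g_\infty,q_\infty)$ with $|\tilde W(q_\infty)|_{\tilde g_\infty}=1$. Concentrated bubble tests at any interior point give $Y(\tilde g_\infty)\leq Y(\mathbb{S}^n)$, while upper semicontinuity of $Y$ under pointed convergence (obtained by pulling back compactly supported test functions from the limit to the sequence) gives $Y(\tilde g_\infty)\geq\limsup Y(\tilde g_j)=Y(\mathbb{S}^n)$; hence $Y(\tilde g_\infty)=Y(\mathbb{S}^n,[g_{\mathbb{S}}])$.

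Finally, in $\tilde g_\infty$-normal coordinates at $q_\infty$ insert Aubin's test function $\phi_\varepsilon(y)=\chi(|y|)\bigl(\varepsilon/(\varepsilon^2+|y|^2)\bigr)^{(n-2)/2}$ into the Yamabe quotient; for $n\geq 6$ the classical expansion (cf.~Paragraph 6.10 in \cite{Aubin, LP}) yields
\[
Q_{\tilde g_\infty}(\phi_\varepsilon)\leq Y(\mathbb{S}^n,[g_{\mathbb{S}}])-c_n|\tilde W(q_\infty)|^2\,\psi_n(\varepsilon)+o(\psi_n(\varepsilon)),
\]
with $\psi_n(\varepsilon)=\varepsilon^4$ for $n\geq 7$ and $\psi_n(\varepsilon)=\varepsilon^4\log(1/\varepsilon)$ for $n=6$. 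Taking $\varepsilon$ small forces $Y(\tilde g_\infty)<Y(\mathbb{S}^n,[g_{\mathbb{S}}])$, contradicting the equality above. The main obstacle is securing uniform curvature and injectivity radius bounds along the rescaled sequence: the critical point is that selecting $q_j$ at the global maximum of $|W|$ and rescaling by $|W(q_j)|^{1/2}$ forces $|\tilde W|\leq 1$ globally, so that the rescaled Einstein sequence has uniform Riemann curvature control, and then Hebey plus Cheeger yield uniform injectivity radius, enabling the Cheeger--Gromov extraction and the application of Aubin's strictly local computation on the limit.
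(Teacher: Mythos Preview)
Your proof is correct and follows essentially the same route as the paper's: contradiction, rescaling at the maximum of $|W|$, Cheeger--Gromov extraction of an Einstein limit with $Y=Y(\mathbb{S}^n)$ and nonvanishing Weyl tensor at the basepoint, then Aubin's local test-function computation (valid precisely for $n\geq 6$) to force $Y<Y(\mathbb{S}^n)$. The only organizational difference is that the paper splits into two cases---$\sup|W|\to\infty$ (Ricci-flat limit) versus $\sup|W|$ bounded (Einstein limit with $\mathrm{Ric}=-(n-1)$)---whereas you handle both at once by always rescaling with $\lambda_j=|W(q_j)|^{1/2}\geq\epsilon_0^{1/2}$, which keeps the rescaled Ricci curvature uniformly bounded; this is a minor streamlining and the key inputs are identical.
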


\begin{proof} Assume otherwise, for some $\epsilon_0 > 0$,  there is a sequence of conformally compact Einstein manifolds $(X^n_j, \ g^+_j)$ and a sequence
$\delta_j \to 0$ such that
$$
Y(X^n_j, \ g^+_j) \geq (1 - \delta_j)Y(\mathbb{H}^n, \ g_{\mathbb{H}}),
$$
but
$$
|K+1| > \epsilon_0,
$$
for a sectional curvature $K$ at some point on $X^n_j$. We are going to derive contradictions in two steps. First, if there is a subsequence $(X^n_j, \ g^+_j)$ whose
sectional curvature is not bounded as $j\to \infty$ (for connivence, we continue to use the same index $j$), then we may rescale the metrics
$$
\tilde g_j = \lambda_j^2 g^+_j
$$
for $\lambda_j =|W|(p_j)[g^+_j] =  \max_{X^n_j} |W|[g^+_j]$. Then, due to the curvature estimates for Einstein manifolds, it is easily seen that, at least for a subsequence,
$(X^n_j, \ \tilde g_j, p_j)$ converges to a complete non-compact Ricci flat manifold $(X^n_\infty, \ g_\infty, p_\infty)$ in Cheeger-Gromov topology. But
$|W|(p_\infty)[g_\infty] = 1$ contradicts with the fact that $Y(X^n_\infty, \ g_\infty) = Y(\mathbb{R}^n, g_E)$ in the light of the estimate of Yamabe constant in Paragraph 6.10
in \cite{Aubin} (cf. also \cite{LP}). \\

Secondly, if there is no curvature blowup, then one may extract a subsequence such that
$$
|W|(p_j)[g^+_j] \to w > 0
$$
and $(X^n_j, \ g^+_j, p_j)$ converges to a complete non-compact Einstein manifold $(X^n_\infty, \ g^+_\infty, p_\infty)$ in Cheeger-Gromov topology. But, again,
$|W|(p_\infty)[g^+_\infty] = w > 0$ contradicts with the fact that $Y(X^n_\infty, \ g^+_\infty) = Y(\mathbb{H}^n, g_{\mathbb{H}})$ in the light of the estimate of
Yamabe constant in Paragraph 6.10 in \cite{Aubin}(cf. also \cite{LP}).
\end{proof}
One wonders whether Proposition \ref{higher-dim}  still holds in dimension 4, which would be much more significant.


\section{Conformally compact Einstein manifolds whose conformal infinities have large Yamabe constants}\label{Sect:large-yamabe}

In this section we will first present the idea in \cite{DJ} to establish the relative volume growth bounds \eqref{volume-com}. We recognize the contribution from \cite{DJ} but are compelled
to give self-contained arguments for a complete, vigorous and correct proof of \eqref{volume-com} to the best of our knowledge.
We will point out what are not clear and not correct in \cite{DJ} and finish filling those
gaps in the subsequent sections. Then we will carry out the rescaling argument to derive the curvature estimates in Theorem \ref{EHBoundary} in two steps similar to that in the proof of
Proposition \ref{higher-dim}.\\

We recall that a Riemannian manifold $(X^n, \ g^+)$ is said to be AH (short for asymptotically hyperbolic) if it is conformally compact and its curvature goes to $-1$ at the infinity.
Obviously a conformally compact Einstein manifold is always AH.
Let us consider the distance function to a given point $p_0\in X^n$:
$$
t = \text{dist}(\cdot, p_0)
$$
and the geodesic sphere $\Gamma_t=\{p\in X^n: \text{dist}(p, p_0) = t\}$.
The important initial step is the estimate (2.3) in Lemma 2.1 of \cite{DJ}, which is
\begin{equation}\label{2.3-dutta}
\nabla_g^2t (v, v) = 1 + O(e^{-\beta t}),
\end{equation}
where $v$ is any unit vector perpendicular to $\nabla t$ and $\beta$ is any positive number less than $2$. It is not clear to us how Section 6.2 in \cite{petersen} is applied in the proof
\eqref{2.3-dutta} in \cite{DJ}. In fact the estimate \eqref{2.3-dutta} does not seem to be correct without the convexity of geodesic spheres (cf. \cite{ST,HQS}). \\

It is observed in \cite{DJ} that one may employ the Bishop-Gromov relative volume comparison theorem and get
$$
\frac {\text{Vol}(\Gamma_t, g^+)}{\text{Vol}(\Gamma_t, g_{\mathbb{H}})} \leq \frac {\text{Vol}(B(t, p_0), g^+)}{\text{Vol}(B(t, 0), g_{\mathbb{H}})} \leq 1
$$
for all $t > 0$ on a conformally compact Einstein manifold. Hence the real issue for \eqref{volume-com} is the lower bound and the key is to establish the relative volume
lower bound by the limit
$$
\lim_{t\to\infty}  \frac {\text{Vol}(\Gamma_t, g^+)}{\text{Vol}(\Gamma_t, g_{\mathbb{H}})}.
$$
It is very original in \cite{DJ} to realize that one may use the Yamabe quotient to bound the relative volume $ \frac {\text{Vol}(\Gamma_t, g^+)}{\text{Vol}(\Gamma_t, g_{\mathbb{H}})}$
from below. Suppose that $(X^n, \ g^+)$ is an AH manifold and that $x$ is the geodesic defining function associated with a representative $\hat g$ of the conformal infinity
$(\partial X^{n-1}, \ [\hat g])$. Let
\begin{equation}\label{r-def}
r = -\log \frac x2
\end{equation}
and $\Sigma_r = \{p\in X^n: r(p) = r\}$ be the level set of the geodesic defining function. We would like to mention that the inequality
(2.2) in Lemma 2.1 of \cite{DJ} is a well known fact about AH manifolds of $C^3$ regularity, which is
\begin{equation}\label{2.2-dutta}
Ddr(v,v) = 1 + O(e^{-2r}),
\end{equation}
where $v$ is any unit vector perpendicular to $\nabla r$, provided that \eqref{cur-decay} holds.
Let $\bar g = x^2g^+$ be the conformal compactification from the defining function and let
$$\bar g_t = \bar g|_{\Gamma_t} \text{ and }\bar g_r=\bar g|_{\Sigma_r}.$$
Also let $\tilde g = 4e^{-2t}g^+ = \psi^\frac 4{n-3}\bar g$ be the conformal compactification from the distance function and let
$$\tilde g_t =  \tilde g|_{\Gamma_t}  \text{ and } \tilde g_r= \tilde g|_{\Sigma_r},
$$
where $\psi=e^{\frac{n-3}{2}(r-t)}$. First of all it is easily seen that $u = r-t$ is bounded on $X^n$ by the triangle inequality for distance functions. It is then observed in Lemma 3.1 of
\cite{DJ} that $|\nabla u|[\bar g]$ is bounded. For the convenience of readers we present a complete proof and an easy fix for a gap in \cite{DJ}.

\begin{lem}\label{liptschitz} Suppose that $(X^n, \ g^+)$ is an AH manifold of $C^3$ regularity and \eqref{cur-decay} holds. Then there is a constant $C$ such that
\begin{equation}\label{gradient-u}
|d u|[\bar g] \leq C,
\end{equation}
when $r$ is large enough.
\end{lem}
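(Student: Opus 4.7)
The plan is to reduce the inequality $|du|_{\bar g} \leq C$ to an asymptotic angular estimate and then run a tailored ODE argument along minimizing $g^+$-geodesics from $p_0$. Since $\bar g = x^2 g^+$, both $|dr|_{g^+} \equiv 1$ (by the defining-function convention) and $|dt|_{g^+} = 1$ a.e., so
\[ |du|^2_{\bar g} = x^{-2}|dr - dt|^2_{g^+} = 2\, x^{-2}(1 - \cos\theta), \]
where $\theta$ is the $g^+$-angle between $\nabla^{g^+}r$ and $\nabla^{g^+}t$. With $x = 2e^{-r}$, the claim reduces to $1 - \cos\theta = O(e^{-2r})$ at smooth points of $t$. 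Along the way I would record a uniform $L^\infty$ bound $|u| \le M$ on $X^n$: the upper bound $u \le r(p_0)$ is the triangle inequality applied via $|dr|_{g^+}\equiv 1$, and the lower bound follows by flowing any $q$ back to a fixed compact level set of $r$ along integral curves of $\nabla^{g^+} r$, which are unit-speed $g^+$-geodesics because $|\nabla r|_{g^+}\equiv 1$.

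For a smooth point $q$ with $r(q)$ large, take a unit-speed minimizing $g^+$-geodesic $\gamma:[0,T]\to X^n$ with $\gamma(0) = p_0$, $\gamma(T) = q$, and set $f(s) := r(\gamma(s))$, so $f'(s) = \cos\theta(\gamma(s))$. The key step is to promote \eqref{2.2-dutta} to the \emph{multiplicative} Hessian statement $\mathrm{Hess}_{g^+}(r)|_{(\nabla r)^\perp} = g^+|_{(\nabla r)^\perp}\bigl(1 + O(e^{-2r})\bigr)$ as a quadratic form; combined with $\mathrm{Hess}_{g^+}(r)(\nabla r,\cdot) \equiv 0$ (from $|\nabla r|_{g^+}\equiv 1$), decomposing $\dot\gamma = \cos\theta\,\nabla r + v_\perp$ with $|v_\perp|_{g^+} = \sin\theta$ yields
\[ f''(s) = \bigl(1 - f'(s)^2\bigr)\bigl(1 + O(e^{-2f(s)})\bigr). \]

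Setting $g(s) := 1 - f'(s) \in [0,2]$ and using $f = s + O(1)$, this becomes $g' = -g(2-g)(1 + O(e^{-2s}))$. If $g(0) = 0$ then $\gamma$ is an integral curve of $\nabla r$, $g \equiv 0$ along $\gamma$, and $|du|_{\bar g}(q) = 0$; otherwise $g > 0$ throughout, and $(\log g)' = -(2 - g)(1 + O(e^{-2s}))$ integrates to
\[ \log g(T) = \log g(0) - 2T + \int_0^T g(s)\,ds + O(1). \]
The telescoping identity $\int_0^T g(s)\,ds = T - (f(T) - f(0)) = -u(\gamma(T)) + r(p_0) \le M + r(p_0)$ controls the integral uniformly in $\gamma$, so $\log g(T) \le -2T + C$ and hence $g(T) \le C'\, e^{-2T}$. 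Substituting back,
\[ |du|_{\bar g}^2(q) = \tfrac{1}{2}e^{2r(q)} g(T) \le \tfrac{C'}{2}\, e^{2(r(q) - T)} = \tfrac{C'}{2}\, e^{2 u(\gamma(T))} \le \tfrac{C'}{2}\, e^{2M}, \]
uniformly in $q$. Since $t$ is smooth off a closed measure-zero set and $u$ is Lipschitz, this pointwise a.e.\ bound yields the claimed $L^\infty$ estimate for $r$ large enough.

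The main obstacle is the need for the \emph{multiplicative} form of the error in the ODE rather than an additive $O(e^{-2r})$: an additive error would produce $g' = -2g + O(e^{-2s})$, giving only $g = O(s\,e^{-2s})$, which after multiplying by $e^{2r(q)}$ leaves an unwanted factor of $r$ in $|du|_{\bar g}^2$. The multiplicative form comes from the vanishing of $\mathrm{Hess}(r)$ in the $\nabla r$-direction, so that the $O(e^{-2r})$ perturbation of \eqref{2.2-dutta} enters $f''$ only through the perpendicular component of $\dot\gamma$ and is automatically weighted by $\sin^2\theta = 1 - (f')^2$. This refinement, together with the telescoping identity for $\int g$, appears to be the easy fix for the gap in \cite{DJ}.
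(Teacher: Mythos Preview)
Your argument is correct and follows essentially the same route as the paper: both reduce to the angular estimate $1-g^+(\nabla r,\nabla t)=O(e^{-2t})$, derive the same ODE $\phi'=(1-\phi^2)\,\nabla^2 r(v,v)$ along minimizing geodesics from $p_0$ (this is exactly \eqref{phi'} in the paper, and the multiplicative form you emphasize is built into that identity via $\nabla^2 r(\nabla r,\cdot)\equiv 0$), and then integrate. Your telescoping identity $\int g\,ds = -u(q)+r(p_0)$ is a clean way to extract the sharp $e^{-2t}$ rate; the paper's write-up is terser and instead singles out the positivity $\phi>0$ in the collar (Lemma~\ref{right-sign}) as the ``easy fix,'' which plays the same role in bounding the initial data for the integration. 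One small imprecision worth flagging: the Hessian estimate \eqref{2.2-dutta} and the identity $|\nabla r|_{g^+}\equiv 1$ hold only in the collar $\{r>r_0\}$, so the ODE and the bound $g\in[0,2]$ are not available on all of $[0,T]$. You should begin the integration at the last time $s_0$ the geodesic meets $\Sigma_{r_0}$ (so that $f'(s_0)\ge 0$, whence $g(s_0)\in[0,1]$); since $s_0\le r_0+M$ by $|u|\le M$, this costs only an $O(1)$ in your final inequality and the rest of your argument goes through unchanged.
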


\begin{proof} It suffices to show that
\begin{equation}\label{angle-r-t}
g^+(\nabla t, \nabla r) = 1 + O(e^{-2t}).
\end{equation}
First, as noticed in the proof of Lemma 3.1 in \cite{DJ},  if let $\phi = g^+(\nabla t, \nabla r)$, then
\begin{equation}\label{phi'}
\frac d{dt} \phi = \nabla^2 r(\nabla t, \nabla t) =  (1 - \phi^2) \nabla^2 r(v, v),
\end{equation}
where $\nabla t = \sqrt{1-\phi^2}v + \phi \nabla r$.  We want to point out that it is not enough to derive \eqref{angle-r-t} just from
\eqref{phi'} and \eqref{2.2-dutta}. One needs the next lemma, which turns out to an easy fact but not a consequence of \eqref{angle-r-t}
as presented in the proof of Lemma 4.1 of \cite{DJ}.
\end{proof}

\begin{lem}\label{right-sign} Suppose that $(X^n, \ g^+)$ is AH of $C^3$ regularity and \eqref{cur-decay} holds. Let $x$ be the geodesic defining function associated
with a representative $\hat g$ of the conformal infinity $(\partial X, \ [\hat g])$.
Let $p_0\in X^n$ is a fixed point and $t$ be the distance to $p_0$ in $g^+$. And let $r$ be given in \eqref{r-def}. Then
$$
g^+(\nabla r, \nabla t) > 0
$$
at the point $t$ is smooth and $r$ is sufficiently large.
\end{lem}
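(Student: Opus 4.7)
The plan is to exploit the convexity of $r$ along any $g^+$-geodesic emanating from $p_0$, once that geodesic enters the collar where $x = 2e^{-r}$ is a geodesic defining function. Concretely, given a point $q$ at which $t$ is smooth and $r(q)$ is large, $q$ is not in the cut locus of $p_0$, so there is a unique unit-speed minimizing $g^+$-geodesic $\gamma\colon [0, L]\to X^n$ with $\gamma(0) = p_0$, $\gamma(L) = q$, and $\dot\gamma(L) = \nabla t|_q$. I would set
\[
\phi(s) := g^+(\nabla r, \dot\gamma(s))|_{\gamma(s)} = \tfrac{d}{ds}(r\circ\gamma)(s),
\]
so the claim reduces to $\phi(L) > 0$.

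Choose $R_1$ large enough that $\{r\geq R_1\}$ lies in the collar, where $\bar g = dx^2 + g_x$. On that region $|\nabla r|_{g^+}\equiv 1$, and the integral curves of $\nabla r$ are $g^+$-geodesics, so $\nabla^2 r(\nabla r,\cdot)\equiv 0$. Decomposing $\dot\gamma = \phi\,\nabla r + \sqrt{1-\phi^2}\,v$ with $v$ a unit vector perpendicular to $\nabla r$, I would derive
\begin{equation}\label{phi-ODE-plan}
\frac{d\phi}{ds} = \nabla^2 r(\dot\gamma, \dot\gamma) = (1-\phi^2)\,\nabla^2 r(v, v).
\end{equation}
By \eqref{2.2-dutta} (which rests on Lemma~\ref{lem:cur-decay}), $\nabla^2 r(v, v) = 1 + O(e^{-2r})$, so after enlarging $R_1$ if necessary, $\nabla^2 r(v, v)\geq 1/2$ throughout $\{r\geq R_1\}$. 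Consequently $\phi$ is non-decreasing along $\gamma$ on every sub-interval on which $\gamma$ stays in the collar.

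To finish I would argue by contradiction: suppose $\phi(L)\leq 0$ while $r(q) > R_2 := \max\{R_1, r(p_0)\}$. Set
\[
s^\ast := \sup\{s \in [0, L] : r(\gamma(s))\leq R_1\},
\]
with $s^\ast = 0$ if the set is empty. In either case $\gamma|_{[s^\ast, L]}$ sits entirely in $\{r\geq R_1\}$ and $r(\gamma(s^\ast))\leq R_2$. Monotonicity of $\phi$ on $[s^\ast, L]$ together with $\phi(L)\leq 0$ forces $\phi \leq 0$ throughout this interval, so
\[
r(q) - r(\gamma(s^\ast)) = \int_{s^\ast}^{L}\phi(s)\,ds \leq 0,
\]
contradicting $r(q) > R_2 \geq r(\gamma(s^\ast))$.

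The new conceptual input is the convexity identity \eqref{phi-ODE-plan} coming from the positivity of the horospherical Hessian at infinity; the main technical obstacle will be arranging for both \eqref{2.2-dutta} and $\nabla^2 r(\nabla r,\cdot)\equiv 0$ to hold on the same region, which is why the entire argument is carried out on a collar and why $R_1$ must be chosen after fixing both the collar and the curvature decay rate. It is worth emphasizing that this approach does \emph{not} invoke the $|du|$ bound of Lemma~\ref{liptschitz}, so the logical order of the two lemmas is preserved and no circularity arises.
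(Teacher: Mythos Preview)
Your proposal is correct and follows essentially the same approach as the paper. Both arguments hinge on the same ODE $\phi' = (1-\phi^2)\,\nabla^2 r(v,v)$ along the minimizing geodesic (your \eqref{phi-ODE-plan} is exactly the paper's (4.3)), the convexity bound $\nabla^2 r(v,v)\geq \tfrac12$ on a collar coming from \eqref{2.2-dutta}, and the observation that the geodesic from $p_0$ to $q$ must leave the compact region $\{r\leq R_1\}$; the paper phrases the conclusion as a direct monotonicity statement (``$\phi>0$ from the last exit time $t_0$ since $\phi(t_0)\geq 0$''), while you recast the same monotonicity as a contradiction via the integral $r(q)-r(\gamma(s^\ast)) = \int_{s^\ast}^L\phi\,ds$, and your explicit threshold $R_2=\max\{R_1,r(p_0)\}$ makes the case analysis (geodesic ever/never entering $\{r\leq R_1\}$) a bit cleaner than the paper's terse version.
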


\begin{proof} It is known (cf. (2.2) in Lemma 2.1 of\cite{DJ}) that, there is $r_0>0$ such that, for $r>r_0$,
$$
\nabla^2 r(v, v) > \frac 12
$$
for all unit vector $v$ that is perpendicular to $\nabla r$.  Then we claim that
$$
\phi = g^+(\nabla t, \nabla r) > 0
$$
for any point $p$ where $t$ is smooth and $r> r_0$. To see this, one considers the minimal geodesic $\gamma$ that connects $p$ to $p_0$ and realizes the distance. Then,
in the light of $(\ref{phi'})$, it is not hard to see that $\phi > 0$ from the time $t_0$ when the geodesic $\gamma$ exits from $\Sigma_{r_0}$. Because $\phi\leq 1$
for all $t$ and $\phi(t_0)\geq 0$.
\end{proof}

By Lemma \ref{right-sign}, for $r$ large, a geodesic
from $p_0$ can touch $\Sigma_r$ at most once till the time it hits the cut locus of $p_0$. Hence by $(\ref{gradient-u})$ the metric $\tilde g = \psi^\frac 4{n-3}\bar g$ extends to $\overline{X^n}$ with Lipschitz regularity up to the boundary.
In fact, as a consequence of the estimate \eqref{2.2-dutta} and Lemma \ref{liptschitz} as observed in Corollary 2.2 of \cite{DJ}, one has the following:

\begin{cor} \label{cor-2.2} Suppose that $(X^n, \ g^+)$ is AH of $C^3$ regularity and \eqref{cur-decay} holds.
Then the second fundamental form of the level set $\Sigma_r$ in $(X^n, \ \tilde g)$ converges to zero as $r\to\infty$.
\end{cor}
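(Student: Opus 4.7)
The plan is to apply the conformal change formula for the second fundamental form to $\tilde g = e^{2\omega}g^+$ with $\omega = \log 2 - t$, and then to exploit a cancellation furnished by the sharp asymptotics \eqref{2.2-dutta} and \eqref{angle-r-t}. A direct computation starting from $\tilde\nabla_X Y = \nabla_X Y + X(\omega)Y + Y(\omega)X - g^+(X,Y)\nabla\omega$ yields, for any hypersurface $\Sigma$ with $g^+$-unit normal $\nu$, the transformation
\begin{equation*}
\tilde\kappa_i \;=\; e^{-\omega}\bigl(\kappa_i + \partial_\nu\omega\bigr)
\end{equation*}
of the principal curvatures (with the convention that a round sphere has positive mean curvature with respect to its outward normal).

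Applying this with $\Sigma = \Sigma_r$ and $\nu = \nabla^{g^+} r$ (which is $g^+$-unit because $x$ is a geodesic defining function), the Hessian bound \eqref{2.2-dutta} gives $\kappa_i = (\nabla^{g^+,2}r)(v_i,v_i) = 1 + O(e^{-2r})$ for a $g^+$-orthonormal frame $\{v_i\}$ of $T\Sigma_r$, while
\begin{equation*}
\partial_\nu\omega \;=\; (\nabla r)(\log 2 - t) \;=\; -g^+(\nabla r,\nabla t) \;=\; -\phi \;=\; -1 + O(e^{-2t})
\end{equation*}
by \eqref{angle-r-t}. Since $e^{-\omega} = e^{t}/2$, the leading $1$'s cancel and
\begin{equation*}
\tilde\kappa_i \;=\; \tfrac{e^t}{2}\bigl(O(e^{-2r}) + O(e^{-2t})\bigr) \;=\; O(e^{t-2r}) + O(e^{-t}).
\end{equation*}
The boundedness of $u = r - t$ (triangle inequality, cf.\ Lemma \ref{liptschitz}) yields $t-2r = -r + O(1)$ and $-t = -r + O(1)$, hence $\tilde\kappa_i = O(e^{-r}) \to 0$ uniformly on $\Sigma_r$ as $r\to\infty$, which is the claimed conclusion.

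The only real obstacle is the cancellation itself: the prefactor $e^t/2$ blows up exponentially, so the sharp asymptotics $\kappa_i = 1 + O(e^{-2r})$ and $\phi = 1 + O(e^{-2t})$ are both essential, and only together do they defeat the blowup. This is why the $C^3$ regularity (used to construct the geodesic defining function and to guarantee \eqref{2.2-dutta}) and the curvature decay \eqref{cur-decay} (used via Lemmas \ref{liptschitz} and \ref{right-sign} to obtain \eqref{angle-r-t}) both enter in full strength. A minor point is that $t$, hence $\omega$ and $\tilde g$, is only Lipschitz across the cut locus of $p_0$; the estimate above holds wherever $t$ is smooth, and since the bound is uniform at such points and they are dense in $\Sigma_r$, this is the appropriate interpretation of the convergence claim for the subsequent applications.
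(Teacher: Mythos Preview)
Your proof is correct and follows exactly the route the paper indicates: the paper does not spell out a proof but states the corollary as a consequence of \eqref{2.2-dutta} and Lemma \ref{liptschitz} (citing Corollary 2.2 of \cite{DJ}), and your argument makes this precise via the standard conformal change formula for principal curvatures, using \eqref{2.2-dutta} for $\kappa_i$ and \eqref{angle-r-t} from Lemma \ref{liptschitz} for $\partial_\nu\omega$. Your remark on the cut locus is also apt and consistent with how the corollary is subsequently applied in the proof of Lemma \ref{lemboundarySobolev}.
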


Corollary \ref{cor-2.2} is used in the calculation of the scalar curvature $R[\tilde g_r]$ for $(\Sigma_r, \tilde g_r)$ (cf. (5.5) in \cite{DJ}).
The following is one of the key steps in \cite{DJ}. We will present the idea from \cite{DJ} and point out what are not clear and not correct . We will finish filling those gaps in the
subsequent sections.

\begin{lem}\label{lemboundarySobolev}
Suppose that $(X^n, \ g^+)$ is an AH manifold of $C^3$ regularity and $(\ref{alter-einstein})$ holds, with its conformal infinity $(\partial X^{n-1}, [ \hat{g}])$ of positive Yamabe type. Then
\begin{align*}
(\frac{Y(\partial X, [\hat{g}])}{(n-2)(n-1)})^{\frac{n-1}{2}}\leq \text{Vol}(\partial X, \tilde g_0) = \lim_{t\to \infty} \text{Vol}(\Gamma_t, \tilde g_t),
\end{align*}
where $\tilde g_0$ is the continuous extension of $\tilde g$ to the boundary.
\end{lem}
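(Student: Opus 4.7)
\emph{Plan.} My strategy combines the Yamabe variational characterization on the level sets $\Sigma_r$ with an integrated upper bound on the total scalar curvature of $(\Sigma_r,\tilde g_r)$. The point is that $\tilde g_r=\psi_r^{4/(n-3)}\bar g_r$ is a Yamabe-type conformal change in dimension $n-1$, where $\psi_r$ and $\bar g_r$ are the restrictions of $\psi$ and $\bar g$ to $\Sigma_r$. The standard conformal identity for scalar curvature, followed by integration by parts on the closed hypersurface $\Sigma_r$, yields
\[
\int_{\Sigma_r} R[\tilde g_r]\,dv_{\tilde g_r}=\int_{\Sigma_r}\Bigl(\tfrac{4(n-2)}{n-3}|\nabla\psi_r|^2_{\bar g_r}+R[\bar g_r]\,\psi_r^2\Bigr)dv_{\bar g_r}.
\]
Since $(\partial X,[\hat g])$ is of positive Yamabe type and $\bar g_r\to\hat g$ in $C^2$, the class $[\bar g_r]$ has positive Yamabe constant for $r\gg 1$, so applying the infimum definition of $Y(\Sigma_r,[\bar g_r])$ to the strictly positive test function $\psi_r$ produces the lower bound
\[
\mathrm{(A)}\qquad\int_{\Sigma_r} R[\tilde g_r]\,dv_{\tilde g_r}\ \geq\ Y(\Sigma_r,[\bar g_r])\,\mathrm{Vol}(\Sigma_r,\tilde g_r)^{(n-3)/(n-1)}.
\]

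The matching upper bound I need is
\[
\mathrm{(B)}\qquad\int_{\Sigma_r} R[\tilde g_r]\,dv_{\tilde g_r}\ \leq\ (n-1)(n-2)\,\mathrm{Vol}(\Sigma_r,\tilde g_r)+o(1)\quad\text{as }r\to\infty.
\]
To derive (B) I would apply the Gauss equation for $\Sigma_r\subset(X^n,\tilde g)$,
\[
R[\tilde g_r]=R[\tilde g]-2\,\mathrm{Ric}[\tilde g](\tilde\nu,\tilde\nu)+\tilde H^2-|\tilde A|^2,
\]
and note that by Corollary \ref{cor-2.2} the second fundamental form $\tilde A$ vanishes uniformly on $\Sigma_r$ as $r\to\infty$, so $\tilde H^2-|\tilde A|^2$ contributes only $o(1)$ upon integration. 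The bulk terms $R[\tilde g]$ and $\mathrm{Ric}[\tilde g](\tilde\nu,\tilde\nu)$ I would rewrite via the conformal change $\tilde g=(2e^{-t})^2 g^+$, and their sum should be controlled by $(n-1)(n-2)+o(1)$ integrated against $dv_{\tilde g_r}$ using the radial Ricci lower bound from \eqref{alter-einstein} together with the sectional-curvature decay in Lemma \ref{lem:cur-decay}. Combining (A) and (B), and sending $r\to\infty$ with $Y(\Sigma_r,[\bar g_r])\to Y(\partial X,[\hat g])$ and $\mathrm{Vol}(\Sigma_r,\tilde g_r)\to\mathrm{Vol}(\partial X,\tilde g_0)$, a rearrangement in the exponent $(n-3)/(n-1)$ delivers the claimed bound $\bigl(\tfrac{Y(\partial X,[\hat g])}{(n-1)(n-2)}\bigr)^{(n-1)/2}\leq\mathrm{Vol}(\partial X,\tilde g_0)$.

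For the identity $\mathrm{Vol}(\partial X,\tilde g_0)=\lim_{t\to\infty}\mathrm{Vol}(\Gamma_t,\tilde g_t)$, I would use that $u=r-t$ is bounded (Lemma \ref{liptschitz}), so $\Gamma_t$ lies in the collar $\{|r-t|\leq C\}$ and, away from the cut locus of $p_0$, is a Lipschitz graph over $\Sigma_r$ along $\nabla^{g^+}r$. The Lipschitz extension of $\tilde g$ to $\overline{X}$ then forces $\mathrm{Vol}(\Gamma_t,\tilde g_t)$ and $\mathrm{Vol}(\Sigma_r,\tilde g_r)$ to share the common limit $\mathrm{Vol}(\partial X,\tilde g_0)$; the cut-locus complications are exactly what Section \ref{Sect:normal-cut-point} is designed to handle. \emph{The main obstacle} is (B): without the convexity of the geodesic spheres the pointwise Riccati estimate \eqref{2.3-dutta} is unavailable, so the bulk curvature terms in the Gauss equation must be controlled by a genuinely integrated analysis along radial geodesics, which is the substance deferred to Section \ref{Sect:curvature-estimate}.
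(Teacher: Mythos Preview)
Your overall architecture matches the paper's: combine the Yamabe lower bound on $\Sigma_r$ with an integrated upper bound on the total scalar curvature of $(\Sigma_r,\tilde g_r)$, then pass to the limit. You have also correctly located the obstruction to (B) and deferred it to Section~\ref{Sect:curvature-estimate}.

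There is, however, a genuine gap in your first displayed identity. You write
\[
\int_{\Sigma_r} R[\tilde g_r]\,dv_{\tilde g_r}
=\int_{\Sigma_r}\Bigl(\tfrac{4(n-2)}{n-3}\,|\nabla\psi_r|^2_{\bar g_r}+R[\bar g_r]\,\psi_r^2\Bigr)dv_{\bar g_r}
\]
``by integration by parts on the closed hypersurface $\Sigma_r$''. This is exactly the equality \eqref{gap-1} that the paper flags as unjustified: $\psi=e^{\frac{n-3}{2}(r-t)}$ depends on the distance function $t$, which is only Lipschitz across the cut locus $C_p$, so $\psi_r$ is not $C^2$ on $\Sigma_r$ and $R[\tilde g_r]$ is not even defined pointwise on $\Sigma_r\cap C_p$. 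The integration by parts must be carried out on $\Sigma_r\setminus(B_r^\epsilon\cup\gamma_r^N)$ and produces boundary terms along $\gamma_r^N=\Sigma_r\cap N_p$ and $\partial B_r^\epsilon$. The content of Section~\ref{Sect:normal-cut-point} (culminating in Theorem~\ref{step-one}) is precisely that these boundary terms have the correct sign, via the normal-cut-locus structure of Lemma~\ref{Ozols} and the angle inequality of Lemma~\ref{angle}, so that one obtains
\[
\int_{\Sigma_r}\Bigl(\tfrac{4(n-2)}{n-3}\,|\nabla\psi|^2+R\,\psi^2\Bigr)dv[\bar g_r]
\;\leq\;\int_{\Sigma_r\setminus(B_r^\epsilon\cup\gamma_r^N)} R\,dv[\tilde g_r]+o_\epsilon(1),
\]
an \emph{inequality} rather than an equality---which is all that is needed for (A). You have misattributed Section~\ref{Sect:normal-cut-point} to the volume identity in the last step; in the paper that step is comparatively routine (following \cite{DJ}, Section~6), while the cut-locus analysis is what rescues the conformal identity you took for granted.
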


\begin{proof}  We are recapturing the proof given in \cite{DJ} in the way that the use of the estimate \eqref{2.3-dutta} and the places where
more vigorous arguments are required are explicitly identified.  The full proof of this lemma will be completed in the subsequent sections.
\\

The first step is to derive (3.8) in \cite{DJ}. Using the Lipschitz extension of $\psi$ based on Lemma \ref{liptschitz}, we have
\begin{equation}\label{r-level-set}
\frac {\int_{\partial X} ((\frac {4(n-2)}{n-3}|\nabla \psi|^2 + R\psi^2)dv)[\hat g]}{(\int_{\partial X} \psi^\frac {2(n-1)}{n-3}dv[\hat g])^\frac {n-3}{n-1}} \leq \liminf_{r\to\infty}
\frac {\int_{\Sigma_r} ((\frac {4(n-2)}{n-3}|\nabla \psi|^2 + R\psi^2)dv)[\bar g_r]}{(\int_{\Sigma_r} \psi^\frac {2(n-1)}{n-3}dv[\bar g_r])^\frac {n-3}{n-1}}.
\end{equation}
On the other hand, we remark that, though
\begin{equation}\label{gap-1}
\int_{\Sigma_r} ((\frac {4(n-2)}{n-3}|\nabla \psi|^2 + R\psi^2)dv)[\bar g_r] = \int_{\Sigma_r} (Rdv)[\tilde g_r]
\end{equation}
easily holds when $\psi$ is smooth, \eqref{gap-1} (cf. (3.7) in \cite{DJ}) may not be correct when $\psi$ is only known to be Liptschiz. Our approach is to use
the deep understanding of the structure of cut loci to overcome the challenge based on \cite{Ozols, Tanaka}. We will deal with this issue and complete this first step
in Section \ref{Sect:normal-cut-point} (cf. Theorem \ref{step-one}).
\\

The second step is to obtain the pointwise scalar curvature estimate
\begin{equation}\label{scalar-cur}
R[\tilde g_r] \leq (n-1)(n-2) + o(1)
\end{equation}
(cf. (5.1) in \cite{DJ}).
The proof of Lemma 5.1 in \cite{DJ} uses Corollary \ref{cor-2.2} and the Laplacian comparison theorem.  More importantly
it uses the estimate \eqref{2.3-dutta} in calculating
$$
\nabla^2t(\nabla r, \nabla r) = 1 - (g^+(\nabla r, \nabla r))^2 +  O(e^{-3t}),
$$
which indeed would imply \eqref{scalar-cur} and
\begin{equation}\label{total-cur-need}
\liminf_{r\to\infty} \frac
{\int_{\Sigma_r} (Rdv)[\tilde g_r]}{(\int_{\Sigma_r}dv[\tilde g_r])^\frac {n-3}{n-1}}
\leq (n-1)(n-2) \lim_{r\to\infty} \text{Vol}(\Sigma_r, \tilde g)^\frac 2{n-1} = (n-1)(n-2) \text{Vol}(\partial X, \tilde g_0)^\frac 2{n-1}.
\end{equation}
We will present a proof of \eqref{total-cur-need} without assuming the estimate \eqref{2.3-dutta} at each smooth point of $t$ on $\Sigma_r$.
Our key idea is to show the part of $\Sigma_r$ where \eqref{2.3-dutta} does not holds has arbitrarily small volume. We will present a complete and correct proof
the second step in Section \ref{Sect:curvature-estimate} (cf. Theorem \ref{step-two}).
\\

The last step is to show that
\begin{equation}\label{volume-geodesic-sphere}
\lim_{t\to\infty} \text{Vol}(\Gamma_t, \tilde g_t) = \text{Vol}(\partial X, \tilde g_0).
\end{equation}
To do so, similar to the discussions in Section 6 of \cite{DJ} that is based on Lemma \ref{liptschitz}, one considers the geodesic sphere $\Gamma_t$ as a Lipschitz graph
over $\partial X$ in $(X^n, \ \tilde g)$, where
$$
x(t) \to 0 \text{ as $t\to\infty$}
$$
in $W^{1, p}$ topology for any $p\in [1, \infty)$. It is then easily seen that \eqref{volume-geodesic-sphere} holds, as shown in \cite{DJ}.
\end{proof}

Consequently, from the idea in \cite{DJ},  for conformally compact Einstein manifolds, we have the lower bound of the relative volume growth. \\

\noindent{\it Proof of Theorem \ref{main-dutta}} One only needs to realize that, as calculated in \cite{DJ}, the following:
\begin{align}\label{ratio}
\frac{\text{Vol}(\Gamma_t, g^+)}{\text{Vol}(\Gamma_t, g_{\mathbb{H}})} = \frac{\text{Vol}(\Gamma_t, \tilde g_t)(\frac{e^{t}}{2})^{n-1}}{\omega_{n-1}\sinh^{n-1}t}
= \frac{\text{Vol}(\Gamma_t, \tilde g_t)}{\omega_{n-1}}+o(1)
\end{align}
as $t\to \infty$.
\qed \\

Now we are ready to prove Theorem \ref{EHBoundary}.\\

\noindent{\it Proof of Theorem \ref{EHBoundary}}. First we want to show that, there are constant $\delta_0> 0$ and $C$ such that
$$
|W|[g^+]\leq C
$$
for any conformally compact Einstein manifolds that satisfy the assumptions  in Theorem \ref{EHBoundary} for $0< \delta \leq \delta_0$.
Assume otherwise there is a sequence of conformally compact Einstein manifolds $(X^n_j, \ g^+_j)$ such that
$$
|W|[g^+_j]\to \infty \text{ and } Y(\partial X_j, [\hat g_j]) \to Y(\mathbb{S}, [g_{\mathbb{S}}])
$$
as $j\to\infty$. By Theorem \ref{main-dutta}, we know that
\begin{equation}\label{j-volume}
(\frac{Y(\partial X_j, [\hat{g}_j])}{Y(\mathbb{S}, [g_{\mathbb{S}}])})^{\frac{n-1}{2}}\leq\frac {\text{Vol}(\Gamma_t, g^+_j)}{\text{Vol}(\Gamma_t, g_{\mathbb{H}})}
\leq \frac {\text{Vol}(B(p_j, t), g^+_j)}{\text{Vol}(B(0, t), g_{\mathbb{H}})}  \leq 1
\end{equation}
for $t>0$. Since $|W|[g^+_j](p)\to 0$ as $p\to \infty$ on each conformally compact Einstein manifold $(X^n_j, \ g^+_j)$, there exists a point $p_j \in X^n_j$ so that
$$
\tau_j =  |W|[g^+_j](p_j) = \max_{p\in X^n_j}|W|[g^+_j](p)\to\infty
$$
as $j\to\infty$. We then consider the rescaled metric $g_j = \tau_j g^+_j$ on the pointed manifold $(X^n_j, p_j)$. From \eqref{j-volume}, one may conclude
that the sequence of pointed Einstein manifolds $(X^n_j, \tau_jg^+_j, p_j)$ converges to a Ricci flat manifold $(X^n_\infty, g_\infty, p_\infty)$ in Cheeger-Gromov
topology. In particular, one gets, again from \eqref{j-volume},
$$
\text{Vol}(\Gamma_t, g_\infty) = \text{Vol}(\Gamma_t, g_{E})
$$
for all $t > 0$, which implies that $(X^n_\infty, g_\infty)$ is isometric to the Euclidean space $(\mathbb{R}^n, \ g_{E})$ and hence contradicts with
$|W|(p_\infty)[g_\infty] = 1$. \\

To finish the proof of Theorem \ref{EHBoundary} we assume again otherwise, there are $\epsilon_0> 0$ and a sequence of conformally compact Einstein
manifolds $(X^n_j, \ g^+_j)$ such that
$$
|W|(p_j)[g_j^+] \geq \epsilon_0 \text{ and } Y(\partial X, \ [\hat g_j]) \to Y(\mathbb{S}^n, [g_{\mathbb{S}}])
$$
as $j\to\infty$. According to the above uniform bound for the curvature for such a sequence, we may extract a subsequence of pointed Einstein manifolds $(X^n_j, \ g^+_j, p_j)$
with $|W|(p_j)[g^+_j]\geq \epsilon_0 > 0$, which converges to an Einstein manifold $(X^n_\infty, g_\infty, p_\infty)$ in Cheeger-Gromov topology. Then the exact same argument
at the end of Section 7 in \cite{DJ} produces a contradiction from $|W|(p_\infty)[g_\infty] \geq \epsilon_0 > 0$. So the proof is complete.
\qed
\\

It is obvious that the argument in the first step in the above proof of Theorem \ref{EHBoundary}
implies Corollary \ref{CCEflat}, in the same spirit as in the proof of Lemma \ref{trivialgroup}.


\section{Normal cut loci}\label{Sect:normal-cut-point}

In this section we focus on the issue in the first step of the proof of Lemma \ref{lemboundarySobolev}. First $\psi$ is smooth away from the cut loci of the point $p$
in a conformally compact Einstein manifold $(X^n, \ g^+)$. Hence it is necessary to understand the fine structure of the set of cut loci and the behavior of the distance
function near the cut loci in order to study \eqref{gap-1}. \\

One might think, for a fixed $p$ in a complete and non-compact manifold $(M^n, \ g)$, the cut loci $C_{p}$ may stay in a compact set. In fact, to the contrary, any component of
$C_p$ extends to the infinity, unless $p$ is a pole. This is because $M^n\setminus C_p$ is always diffeomorphic to the Euclidean space $\mathbb{R}^n$.
\\

On a complete Riemannian manifold $(M^n, \ g)$ with a fixed point $p\in M^n$, the set $C_p$ of cut loci consists of the set $Q_p$ of conjugate points and the set
$A_p$ of non-conjugate cut loci. Among the points in $A_p$ we call those from which there are exactly two minimal geodesics connecting to $p$ and realizing
the distance to $p$ in $(M^n, \ g)$ the normal cut loci, according to \cite{Ozols, Tanaka}. We denote the set of all normal cut loci by $N_p$ and
the rest of non-conjugate cut loci by $L_p$. In those notations we have
$$
C_p = Q_p\bigcup L_p \bigcup N_p.
$$
We recall from \cite{Ozols, Tanaka} the following facts about the cut loci on Riemannian manifolds in general.

\begin{lem} \label{Ozols} (\cite{Ozols, Tanaka}) Suppose that $(M^n, \ g)$ is a complete Riemannian manifold and that $p\in M^n$. Then
\begin{itemize}
\item The closed set $Q_p\bigcup L_p$ is of Hausdorff dimension no more than $n-2$.
\item The set $N_p$ of normal cut loci consists of possibly countably many disjoint smooth hypersurfaces in $M^n$.
\item Moreover, at each normal cut locus $q\in N_p$, there is a small open neighborhood $U$ of $q$ such that
$U\bigcap C_p = U\bigcap N_p$ is a piece of smooth hypersurface in $M^n$.
\end{itemize}
\end{lem}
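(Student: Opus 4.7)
The plan is to work in the tangent space $T_pM$ and pull back structural information along the exponential map, using the decomposition of the tangent cut locus $\tilde C_p$ according to conjugacy and the number of minimal preimages. The key observation is that for $v \in \tilde C_p$ non-conjugate, $\exp_p$ is a local diffeomorphism at $v$, so local smooth structure transfers cleanly from $T_pM$ to $M$; whereas on the conjugate part $\tilde Q_p$ the differential $d\exp_p$ has a nontrivial kernel that cuts down the dimension of the image.

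I would first establish the smooth hypersurface structure of $N_p$ (the second and third bullets). At $q \in N_p$ with exactly two minimal non-conjugate preimages $v_1, v_2 \in \tilde C_p$, choose small neighborhoods $U_i$ of $v_i$ on which $\exp_p|_{U_i}$ is a diffeomorphism onto an open set containing $q$. Apply the implicit function theorem to the system $\exp_p(u_1)=\exp_p(u_2)$ together with $|u_1|^2 = |u_2|^2$ on $U_1 \times U_2$. The differential of the first equation is surjective onto $T_qM$ by the local diffeomorphism property, so the solution set is an $(n-1)$-dimensional smooth submanifold of $U_1 \times U_2$, whose image under $(u_1, u_2) \mapsto \exp_p(u_1)$ is a smooth hypersurface $\Sigma$ near $q$. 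Since $\tilde C_p$ is closed and $q$ has only the two preimages $v_1,v_2$, one can shrink the $U_i$ and choose a neighborhood $U$ of $q$ small enough that no additional minimal preimages appear and no conjugate tangent vectors lie above $U$; then $U \cap C_p = U \cap N_p = \Sigma$.

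For the first bullet I would split $Q_p \cup L_p$ into two pieces. Near $q \in L_p$ with $m \geq 3$ minimal preimages $v_1,\dots,v_m$, the previous construction applied pairwise gives $\binom{m}{2}$ smooth hypersurfaces through $q$; the tangent hyperplane of the one arising from the pair $(v_i,v_j)$ is the orthogonal complement of $v_i-v_j$ in $T_qM$ (computed via the two local inverses of $\exp_p$), so at least two of them are transverse and $L_p$ locally sits inside their intersection, of dimension at most $n-2$. For $Q_p$, stratify the set $\tilde Q_p \cap \tilde C_p$ by the corank $k \geq 1$ of $d\exp_p$; on each stratum the image has local dimension at most $n-k$, and the sharpening to $n-2$ uses that the kernel of $d\exp_p$ at a minimizing conjugate vector must be spanned by Jacobi fields vanishing at both endpoints, a constraint that forces the image dimension of the corank-$1$ stratum down by one more.

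The main obstacle is this last dimension bound on $Q_p$: the codimension-$2$ estimate for $L_p$ is essentially transversality and follows routinely from the construction for $N_p$, but controlling the conjugate minimizing locus requires the detailed stratification and Jacobi-field analysis carried out in \cite{Ozols, Tanaka}; I would adopt those arguments rather than reprove them here, since the remainder of the paper only needs the qualitative conclusions of the lemma (namely that $Q_p \cup L_p$ has Hausdorff codimension at least $2$ and that $N_p$ is locally a smooth hypersurface with $C_p = N_p$ near each of its points).
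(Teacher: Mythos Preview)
The paper does not prove this lemma at all: it is stated as a recall of structural facts from \cite{Ozols,Tanaka}, with no argument given. Your proposal therefore supplies more than the paper does, and the outline you give is essentially the standard one underlying those references --- implicit function theorem at normal cut points, pairwise transversality for $L_p$, and the Itoh--Tanaka stratification for $Q_p$, the last of which you rightly defer to the literature.

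Two small points of precision are worth tightening. First, the normal direction to the equidistant hypersurface for a pair $(i,j)$ is the difference of the \emph{terminal} unit tangents $\gamma_i'(t)-\gamma_j'(t)$ at $q$, not the initial vectors $v_i-v_j\in T_pM$; your parenthetical about local inverses suggests you mean this, but as written it is ambiguous. Second, the reason your implicit-function-theorem step succeeds is that these terminal directions are distinct unit vectors (otherwise the two minimizing geodesics would coincide), so the gradient of $|\phi_1|-|\phi_2|$ at $q$ is nonzero; this is the substantive nondegeneracy and should be stated explicitly rather than folded into ``surjective onto $T_qM$.'' Neither issue is a genuine gap, and since the paper itself only cites the result, your sketch is already more than adequate for the purposes of Section~\ref{Sect:normal-cut-point}.
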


In our cases, on a conformally compact Einstein manifold $(X^n, \ g^+)$ with a given point $p$, we are concerned with the set
$$
\gamma_r = \Sigma_r \bigcap C_{p} = (\Sigma_r\cap (Q_{p}\cup L_{p}))\bigcup (\Sigma_r\cap N_{p}) = \gamma^{QL}_r\bigcup\gamma^N_r,
$$
where $\psi$ is not smooth as a Liptschitz function on $\Sigma_r$. Before we move to look closely on \eqref{gap-1} we would like to mention some more
facts about the distance function $t$ and the geodesic spheres in our context.

\begin{lem} \label{less-pi} Suppose that $(X^n, \ g^+)$ is AH of $C^3$ regularity and \eqref{cur-decay} holds. Then
\begin{itemize}
\item When $t$ is sufficiently large, the geodesic sphere $\Gamma_t$ is a Liptschitz graph over $\partial X$.
\item When $t$ is sufficiently large, the outward angle of the corner at the normal cut locus on $\Gamma_t$ is always less than $\pi$.
\end{itemize}
\end{lem}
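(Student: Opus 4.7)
My plan is to deduce both bullets from a single graph parameterization of $\Gamma_T$ obtained by inverting the distance $t$ along the integral curves of $\nabla^{g^+}r$. In the collar of $\partial X$ with coordinates $(x,\theta)$ and $r=-\log(x/2)$, these integral curves are the curves $\gamma_\theta(r)$ with $\theta$ fixed, which are unit-speed $g^+$-geodesics running out to the boundary. Since $u=r-t$ is bounded (by the triangle inequality for the two distance functions, as in Lemma 3.1 of \cite{DJ}), for $T$ sufficiently large the sphere $\Gamma_T$ lies entirely in this collar region.

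The first step is to show strict monotonicity of $r\mapsto t(\gamma_\theta(r))$. At points where $t$ is smooth, this follows directly from Lemma \ref{right-sign} in the form $\frac{d}{dr}(t\circ\gamma_\theta)=g^+(\nabla t,\nabla r)>0$. At a cut point $q$ of $p_0$ lying on $\gamma_\theta$, I will use that in $C^3$ regularity there are only finitely many minimizing geodesics $\sigma_1,\ldots,\sigma_k$ from $p_0$ to $q$, so in a neighborhood of $q$ one may write $t=\min_j f_j$ where each $f_j$ is the smooth ``length of the geodesic perturbation of $\sigma_j$'' function; running the ODE argument behind Lemma \ref{right-sign} along each $\sigma_j$ separately gives $g^+(\dot\sigma_j(T),\nabla r)>0$ at $q$ for $r(q)$ large, so each $f_j$ is strictly $r$-increasing at $q$ along $\gamma_\theta$, and hence so is their minimum. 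Together with $t\to\infty$ along $\gamma_\theta$, this produces a unique $r_T(\theta)$ with $t(\gamma_\theta(r_T(\theta)))=T$. For the Lipschitz regularity of $r_T$ (equivalently of $x_t(\theta)=2e^{-r_T(\theta)}$), I will use $r_T(\theta)-T=u(r_T(\theta),\theta)$ together with Lemma \ref{liptschitz} and the collar form $\bar g=dx^2+\hat g+O(x)$ to estimate, for $p_i=(r_T(\theta_i),\theta_i)\in\Gamma_T$,
\[
|r_T(\theta_1)-r_T(\theta_2)|=|u(p_1)-u(p_2)|\leq C\bigl(2e^{-T}|r_T(\theta_1)-r_T(\theta_2)|+\text{dist}_{\hat g}(\theta_1,\theta_2)\bigr),
\]
and to absorb the first term on the right for $T$ large.

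For the second bullet, I will linearize at a normal cut locus $q\in\Gamma_T$ with the two distinct minimizing geodesics $\gamma_1,\gamma_2$ from $p_0$ and unit tangents $v_i=\dot\gamma_i(T)\in T_qX$. In normal coordinates at $q$,
\[
t(\exp_q w)=T+\min\bigl(g^+(w,v_1),g^+(w,v_2)\bigr)+O(|w|^2),
\]
so the tangent cone of $\{t>T\}$ at $q$ is the closed wedge $\{w\in T_qX:g^+(w,v_1)\geq 0\text{ and }g^+(w,v_2)\geq 0\}$, whose cone angle is $\pi-\beta$ with $\beta=\angle_{g^+}(v_1,v_2)$. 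Since $q\in N_{p_0}$ forces $\gamma_1\neq\gamma_2$, the uniqueness of geodesics with prescribed initial data at $q$ (applied to the reverse geodesics) forces $v_1\neq v_2$, so $\beta>0$ and the outward angle $\pi-\beta<\pi$. As a sharper bonus, the estimate \eqref{angle-r-t} applied to each smooth branch of $t$ gives $g^+(v_i,\nabla r)=1+O(e^{-2T})$, hence $\beta=O(e^{-T})$, and the outward angle approaches $\pi$ from below as $T\to\infty$.

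The main obstacle is the strict monotonicity step in the first bullet: the naive implication ``$dt/dr>0$ almost everywhere implies $t$ is strictly increasing'' can fail for Lipschitz functions without additional structure, so the local representation of $t$ as the minimum of finitely many smooth distance functions near a cut point --- which relies on the structure theory of cut loci summarized in Lemma \ref{Ozols} from \cite{Ozols, Tanaka} --- is essential to control the behavior of $t$ at the cut set of $p_0$ along each ray $\gamma_\theta$.
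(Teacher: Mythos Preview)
For the first bullet you follow the route the paper indicates (it simply invokes Lemma~\ref{liptschitz} and Lemma~\ref{right-sign} together with the argument of Section~4 in \cite{DJ}), only with more detail. One caveat: your local representation $t=\min_j f_j$ with \emph{finitely many} smooth $f_j$ near a cut point is guaranteed by the structure theory of Lemma~\ref{Ozols} only away from conjugate points; at a conjugate cut point there may be a continuum of minimizing geodesics. This is easily patched without that representation: by \eqref{angle-r-t} one has $g^+(v,\nabla r)\geq 1-Ce^{-2r}$ for \emph{every} terminal velocity $v$ of a minimizing geodesic from $p_0$, so the right Dini derivative of the Lipschitz function $r\mapsto t(\gamma_\theta(r))$ is bounded below by a positive constant at every point once $r$ is large, which already forces strict monotonicity.

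For the second bullet your approach is genuinely different from the paper's. The paper argues by contradiction: if the inward dihedral angle at $q\in N_{p_0}\cap\Gamma_T$ were less than $\pi$, then a small geodesic ball centered at $\gamma(s)$ (for $s$ just below $T$) on a minimizing geodesic $\gamma$ to $q$, with $q$ on its boundary, could not fit inside the inward wedge touching only at the vertex; hence it would meet $\Gamma_T$ at some $\bar q\neq q$ strictly inside the ball, and the triangle inequality gives $d(p_0,\bar q)<T$, a contradiction. You instead compute the tangent cone of $\{t>T\}$ directly from $t=\min(f_1,f_2)$ and read off the outward dihedral angle as $\pi-\beta$ with $\beta=\angle(v_1,v_2)>0$. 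Both arguments are correct and work on any geodesic sphere (the AH hypothesis is not needed for the qualitative statement); yours is more explicit and delivers the quantitative refinement $\beta=O(e^{-T})$ from \eqref{angle-r-t}, which is exactly the smallness of angle used later in the proof of Lemma~\ref{angle}.
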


\begin{proof} The first statement can be proven using the same argument as in Section 4 of \cite{DJ} (cf. Lemma \ref{liptschitz} and Lemma \ref{right-sign} in the previous
section).  \\

From \cite{Ozols, Tanaka} we know that the singularities for the geodesic sphere at normal cut loci are corners that are, at least locally,  the meet of two smooth hypersurfaces.
To see the outward angle of such corner at each normal cut locus is always less than $\pi$, let us assume otherwise. Let $\gamma$ be a (distance realizing) minimal geodesic
from the fixed $p$ to a normal cut locus $q\in\Gamma_t$ where the inward angle is less than $\pi$. We may push toward the geodesic sphere $\Gamma_t$
from inside a small geodesic ball centered along $\gamma$. Clearly the geodesic ball will definitely touch the geodesic sphere $\Gamma_t$ at some point $\bar q\in\Gamma_t$
before it reaches the corner $q\in \Gamma_t$. This yields a contradiction because the distance from $p$ to the point $\bar q\in\Gamma_t$ would be definitely
less than $t$.
\end{proof}

By compactness of $\gamma_r^{QL}$, there are finitely many points $p_i\in \gamma_r^{QL}$ so that $B_r^{\epsilon}=\bigcup_i B_{\epsilon}(p_i)$ covers $\gamma_r^{QL}$. Then $B_r^{\epsilon}$ together with $\gamma^N_r$ covers the set  $\gamma_r$. Then we perform the integral by parts
to calculate the Yamabe functional for $\psi$ as follows:
\begin{equation}\label{integral-by-part}
\aligned \int_{\Sigma_r\setminus (B^\epsilon_r\cup\gamma_r^N)}  & ((\frac {4(n-2)}{n-3}|\nabla\psi|^2 + R\psi^2)dv)[\bar g_r]
= \int_{\Sigma_r\setminus (B^\epsilon_r\cup\gamma_r^N)}  (R dv)[\tilde g_r] \\ & + \oint_{\partial (\Sigma_r\setminus (B^\epsilon_r\cup\gamma_r^N))}
\frac {4(n-2)}{n-3}\psi (\frac {\partial}{\partial n}\psi d\sigma)[\bar g_r]\endaligned
\end{equation}
In the light of Lemma \ref{Ozols}, for almost every $r$ as $r\to\infty$ and almost every $\epsilon$ as $\epsilon\to 0$, one may assume $\gamma^{QL}_r$ is of Hausdorff dimension
no more than $n-3$ and $\gamma^N_r\setminus B^\epsilon_r$ is a union of finitely many disjoint hypersurfaces in $\Sigma_r$. Hence
$$
\oint_{\partial (\Sigma_r\setminus (B^\epsilon_r\cup\gamma_r^N))}
\psi( \frac {\partial}{\partial n}\psi d\sigma)[\bar g_r]  = \oint_{\partial B^\epsilon_r} \psi (\frac {\partial}{\partial n}\psi d\sigma)[\bar g_r]
+ \oint_{\gamma_r^N\setminus B^\epsilon_r}
\psi ((\frac {\partial}{\partial n^+}\psi +\frac {\partial}{\partial n^-}\psi) d\sigma)[\bar g_r]
$$
where $n^+$ and $n^-$ are the two outward normal directions to $\gamma^N_r$ from the inside of $\Sigma_r\setminus (B^\epsilon_r\cup\gamma^N_r)$. It is not hard to see that
\begin{equation}\label{lower-hausdorff-dim}
\oint_{\partial B^\epsilon_r} \psi (\frac {\partial}{\partial n}\psi d\sigma)[\bar g_r] \to 0
\end{equation}
as $\epsilon\to 0$. Because, $\gamma^{QL}_r$ is compact and of Hausdorff dimension no more than $n-3$; and $\psi$ is uniformly Liptschitz on $\Sigma_r$ (cf. \eqref{gradient-u} in
Lemma \ref{liptschitz}).
Recall that
$\psi = e^{\frac {n-3}2 (r-t)}$ at least for almost every $r$. Therefore
$$
\frac {\partial}{\partial n^\pm}\psi =  - \frac {n-3}2 \psi \frac {\partial t}{\partial n^\pm} =  - \frac {n-3}2 \psi (\nabla t)^\pm \cdot n^\pm
$$
and
\begin{equation}\label{normal-derivatives}
\frac {\partial}{\partial n^+}\psi +\frac {\partial}{\partial n^-}\psi = -\frac {n-3}2\psi (\frac {\partial t}{\partial n^+} +\frac {\partial t}{\partial n^-} )
= -\frac {n-3}2\psi ((\nabla t)^+ - (\nabla t)^-)\cdot n^+,
\end{equation}
where $(\nabla t)^\pm$ is the gradient of the distance function $t$ with respect to the metric $\bar g$ from either side of the corner $\gamma^N_r$.

\begin{lem}\label{angle} For almost every $r$, when $\gamma^N_r$ is union of disjoint hypersurafces in $\Sigma_r$,
\begin{equation}\label{angle-less-half-pi}
\frac {\partial t}{\partial n^+} +\frac {\partial t}{\partial n^-} \geq 0
\end{equation}
at each point on $\gamma^N_r$.
\end{lem}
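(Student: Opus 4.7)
The plan is to exploit the local $\min$-representation of the distance function $t$ at a normal cut locus. Fix $q \in \gamma^N_r$. By definition of $N_p$, there are exactly two minimizing $g^+$-geodesics $\gamma_1, \gamma_2$ from $p$ to $q$, neither of which has $q$ as a conjugate point. By the structure theorem recalled in Lemma \ref{Ozols}, in a small neighborhood $U$ of $q$ in $X^n$ the two geodesics give rise to smooth functions $t_1, t_2 \in C^\infty(U)$ (the geodesic distances to $p$ along each branch) with $t_1(q) = t_2(q) = t(q)$, $\nabla t_1(q) \neq \nabla t_2(q)$, and
\[
t(y) \;=\; \min\{t_1(y),\, t_2(y)\} \qquad \text{for } y\in U.
\]
Labelling so that the ``$+$'' side of $\gamma^N_r$ in $\Sigma_r$ is $\{t_1<t_2\}\cap\Sigma_r\cap U$ (where $t\equiv t_1$) and the ``$-$'' side is $\{t_1>t_2\}\cap\Sigma_r\cap U$ (where $t\equiv t_2$), we have $\gamma^N_r\cap U = \{t_1 = t_2\}\cap\Sigma_r\cap U$.

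Next I compute the one-sided derivatives. Because $t_1$ is smooth across $\gamma^N_r$, the one-sided directional derivative $\partial t/\partial n^+$ at $q$ equals $(\nabla^{\bar g} t_1)(q)\cdot n^+$, with the dot product in $\bar g$; the analogous identity holds on the ``$-$'' side with $t_2$. Using $n^- = -n^+$ and summing gives
\[
\frac{\partial t}{\partial n^+} + \frac{\partial t}{\partial n^-} \;=\; \bigl(\nabla^{\bar g}(t_1-t_2)\bigr)\cdot n^+.
\]
Since $n^+$ is tangent to $\Sigma_r$, only the tangential part $\nabla^{\bar g_r}(t_1-t_2)$ contributes. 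The hypothesis that $\gamma^N_r$ is a smooth hypersurface of $\Sigma_r$ forces $t_1-t_2$ to be a local defining function for $\gamma^N_r$ inside $\Sigma_r$, negative on ``$+$'' and positive on ``$-$''. Consequently $n^+$, which points from ``$+$'' to ``$-$'', is a positive multiple of $\nabla^{\bar g_r}(t_1-t_2)$, so that
\[
\frac{\partial t}{\partial n^+} + \frac{\partial t}{\partial n^-} \;=\; \bigl|\nabla^{\bar g_r}(t_1-t_2)\bigr|_{\bar g} \;\geq\; 0,
\]
which is exactly the desired estimate \eqref{angle-less-half-pi}.

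I do not anticipate a serious obstacle. The only mild technical point is ensuring that $\{t_1 = t_2\}$ meets $\Sigma_r$ transversally, so that $\gamma^N_r$ is a smooth hypersurface and $n^+$ is well-defined; this is precisely built into the hypothesis of the lemma and is a Sard-type generic condition in $r$. As an independent sanity check, the conclusion is consistent with Lemma \ref{less-pi}: the outward angle of the corner on $\Gamma_t$ being less than $\pi$ is the geometric manifestation of the $\min$-shape of the level set of $t$, which is precisely what arranges the jump $(\nabla t)^+-(\nabla t)^-$ to point in the same tangential direction as $n^+$.
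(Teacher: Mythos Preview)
Your proof is correct and takes a genuinely different route from the paper's. The paper argues geometrically: it projects $n^+$ onto the plane $P$ spanned by $(\nabla t)^+$ and $(\nabla t)^-$, invokes the asymptotic estimate \eqref{angle-r-t} to control the relevant angles, and then appeals to Lemma \ref{less-pi} (the outward corner angle on $\Gamma_t$ is less than $\pi$) to conclude that the angle from $(\nabla t)^-$ to the projection $n^t$ is at least the angle from $(\nabla t)^+$ to $n^t$. Your argument instead goes straight to the local representation $t=\min\{t_1,t_2\}$ at a normal cut point and reduces the inequality to the tautology that $n^+$ points in the direction of increase of $t_1-t_2$, giving $\partial t/\partial n^+ + \partial t/\partial n^- = |\nabla^{\bar g_r}(t_1-t_2)|\ge 0$. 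This is more elementary and self-contained: it does not use the asymptotic angle control \eqref{angle-r-t}, and it bypasses Lemma \ref{less-pi} entirely (indeed, as you observe, the $\min$-structure is exactly what underlies Lemma \ref{less-pi}). The paper's approach, by contrast, is more visual and ties the inequality to the concrete picture of the corner on the geodesic sphere, which may be useful elsewhere in the argument. Both proofs ultimately rest on the structure theorem for normal cut loci (Lemma \ref{Ozols}).
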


\begin{proof} Given a point $q\in \gamma^N_r$,  let us consider the plane P spanned by $(\nabla t)^+$ and $(\nabla t)^-$. And let $n^t$ be the unit direction of the
projection of $n^+$ to the plane P. One notices that, from \eqref{angle-r-t}, the angle between $(\nabla t)^+$ and $(\nabla t)^-$ is arbitrarily small as well as
the angle between $(\nabla t)^\pm$ and $n^t$ is arbitrarily close to $\frac \pi 2$, as $r\to \infty$. In the light of \eqref{normal-derivatives},
to verify \eqref{angle-less-half-pi} is to verify that the angle from $(\nabla t)^-$ to
$n^t$ is not smaller than the one from $(\nabla t)^+$ to $n^t$, since $\|(\nabla t)^+\| = \|(\nabla t)^-\|$.

\vskip 0.2in
\hskip 1.2in\includegraphics[scale= 0.5]{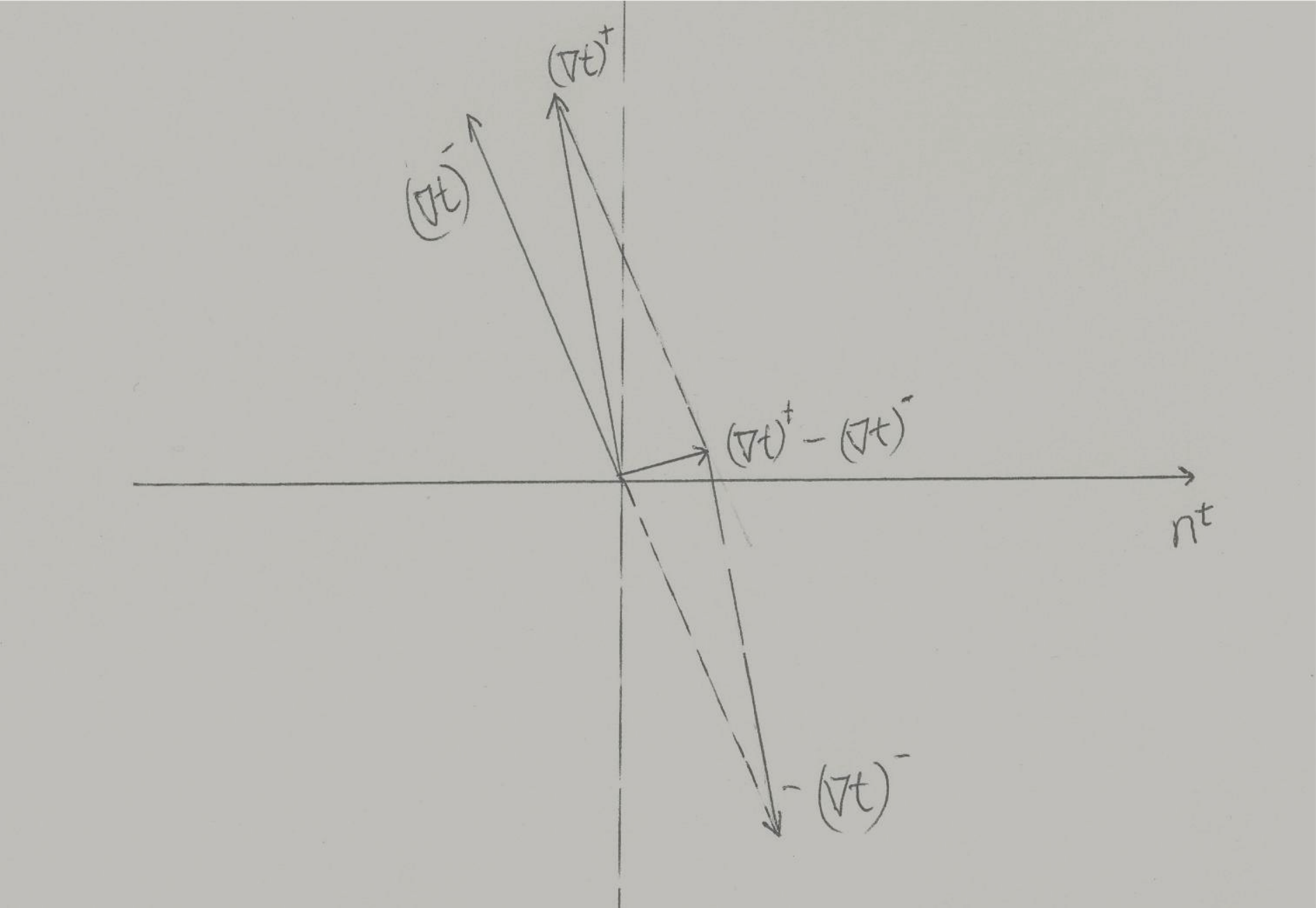}
\vskip 0.2in

This turns out to be true because the outward angle of the corner at any normal cut locus on the geodesic sphere is always less than $\pi$ according to Lemma \ref{less-pi}.
\end{proof}

From the proof of Lemma \ref{angle}, we can observe that for $r>0$ large, $N_p$ intersects with $\Sigma_r$ transversely, so that for $q\in N_p$, there exists $v\in T_q N_p$ so that the angle between $v$ and $\nabla_g r(q)$ is bounded by $Ce^{-r}$ with a uniform constant $C>0$. Therefore, $\mathcal {H}^{n-1}(\Sigma_r\bigcap C_p)=0$. Similarly, $\mathcal {H}^{n-1}(\Gamma_t\bigcap C_p)=0$ for $t>0$ large. To summarize what we have so far in this section we state the following proposition:

\begin{thm}\label{step-one} Suppose that $(X^n, \ g^+)$ is an AH manifold of $C^3$ regularity and \eqref{cur-decay} holds.
Let $p$ be a fixed point on $X^n$. Let $t$ be the distance function to $p$ on $(X^n, \ g^+)$
and let $r = - \log \frac x2$, where $x$ is the geodesic defining function associated with a representative $\hat g$ of the conformal infinity $(\partial X, \ [\hat g])$.
Then for almost all large $r>0$ so that $\mathcal {H}^{n-2}(\gamma_r^{QL})=0$, it holds that
\begin{equation}\label{with-singularity}
\int_{\Sigma_r} ((\frac {4(n-2)}{n-3}|\nabla\psi|^2 + R\psi^2)dv)[\bar g_r]
\leq  \int_{\Sigma_r\setminus (B^\epsilon_r\cup\gamma_r^N)}  (R dv)[\tilde g_r] + o_\epsilon(1),
\end{equation}
where $\psi = e^{\frac {n-3}2(r-t)}$ and $o_\epsilon(1)\to 0$ as $\epsilon\to 0$.
\end{thm}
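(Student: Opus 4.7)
My plan is to assemble three ingredients: the smooth integration-by-parts identity \eqref{integral-by-part}, the favorable sign provided by Lemma \ref{angle} for the corner flux along $\gamma_r^N$, and the Lipschitz regularity of $\psi$ from Lemma \ref{liptschitz} together with the Hausdorff dimension bound in Lemma \ref{Ozols}. The argument is naturally organized as follows: choose $r$ generically, perform the identity on $\Omega_\epsilon:=\Sigma_r\setminus(B^\epsilon_r\cup\gamma_r^N)$ and drop the favorable-sign corner term, bound the residual spherical flux over $\partial B^\epsilon_r$, and finally replace $\Omega_\epsilon$ by $\Sigma_r$ on the left.

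For the choice of $r$, I would restrict to the full-measure set of large $r$ with $\mathcal{H}^{n-2}(\gamma_r^{QL})=0$ and with $\Sigma_r$ meeting each smooth hypersurface component of $N_p$ transversally. The first follows from the bound $\dim_{\mathcal H}(Q_p\cup L_p)\leq n-2$ in Lemma \ref{Ozols} via a coarea (Eilenberg-type) slicing inequality; the second is Sard's theorem applied to the restriction of $r$ to each smooth component of $N_p$. For such $r$, compactness of $\Sigma_r$ forces $\gamma_r^N$ to be a finite disjoint union of smooth compact hypersurfaces of $\Sigma_r$, making both the integration by parts on $\Omega_\epsilon$ and the two-sided flux evaluation along $\gamma_r^N$ rigorous.

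On $\Omega_\epsilon$ the function $\psi=e^{\frac{n-3}{2}(r-t)}$ is smooth and \eqref{integral-by-part} applies. By the computation in \eqref{normal-derivatives}, the corner contribution along $\gamma^N_r\setminus B^\epsilon_r$ equals
\begin{equation*}
-2(n-2)\oint_{\gamma^N_r\setminus B^\epsilon_r}\psi^2\Bigl(\tfrac{\partial t}{\partial n^+}+\tfrac{\partial t}{\partial n^-}\Bigr)d\sigma[\bar g_r],
\end{equation*}
which is $\leq 0$ by Lemma \ref{angle}, so discarding it only strengthens the inequality. The spherical boundary flux over $\partial B^\epsilon_r$ is $o_\epsilon(1)$ by \eqref{lower-hausdorff-dim}: $\gamma_r^{QL}$ can be covered by $O(\epsilon^{-(n-3)})$ balls with total $(n-2)$-boundary area $O(\epsilon)$, while $\psi$ and $|\nabla\psi|$ are uniformly bounded on $\Sigma_r$ by Lemma \ref{liptschitz}.

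To pass from $\Omega_\epsilon$ to $\Sigma_r$ on the left, observe that $\gamma_r^N$ has $(n-1)$-dimensional measure zero as a finite union of codimension-one hypersurfaces and that $\mathrm{Vol}_{n-1}(B^\epsilon_r)=o_\epsilon(1)$ by the same Hausdorff dimension bound on $\gamma_r^{QL}$. Since $\tfrac{4(n-2)}{n-3}|\nabla\psi|^2+R\psi^2$ is in $L^\infty(\Sigma_r)$—using the uniform Lipschitz bound from Lemma \ref{liptschitz} together with boundedness of $R[\bar g_r]$ via \eqref{cur-decay} and the Gauss equation on $\Sigma_r$—the bulk difference is also $o_\epsilon(1)$, giving \eqref{with-singularity}. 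The main technical obstacle is not the analytic estimate but the setup: one must justify for generic $r$ that $\gamma_r^N$ is a tame hypersurface in $\Sigma_r$ with well-defined one-sided normals to which Lemma \ref{angle} applies, and that the singular stratum $\gamma_r^{QL}$ is genuinely negligible in the required Hausdorff sense. This is precisely what forces the use of the structure theorem in Lemma \ref{Ozols} and the transversality step above; the remaining bookkeeping of the three error terms is routine.
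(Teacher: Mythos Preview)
Your proposal is correct and follows essentially the same route as the paper: integrate by parts on $\Sigma_r\setminus(B^\epsilon_r\cup\gamma_r^N)$ via \eqref{integral-by-part}, discard the corner flux along $\gamma_r^N\setminus B^\epsilon_r$ using the sign from Lemma \ref{angle} and \eqref{normal-derivatives}, and kill the $\partial B^\epsilon_r$ flux by \eqref{lower-hausdorff-dim} and the Lipschitz bound of Lemma \ref{liptschitz}. Your added details (the coarea slicing for $\mathcal H^{n-2}(\gamma_r^{QL})=0$, Sard for transversality, and the explicit passage from $\Omega_\epsilon$ to $\Sigma_r$ on the left using $L^\infty$ control of the integrand) are exactly the justifications the paper leaves implicit; one small caution is that compactness of $\Sigma_r$ alone does not force $\gamma_r^N$ itself to be finite, since components of $N_p$ may accumulate on $Q_p\cup L_p$---it is only $\gamma_r^N\setminus B^\epsilon_r$ that is a finite union, which is all your argument actually uses.
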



\section{Estimates of the total scalar curvature}\label{Sect:curvature-estimate}

In this section we focus on the issue in the second step of the proof of Lemma \ref{lemboundarySobolev}. Let us first be very clear on how \eqref{2.3-dutta} is used in
the argument in \cite{DJ} and what one can hope to get for a upper bound for the scalar curvature $R[\tilde g_r]$.
Recall from \cite{DJ}, to estimate the scalar curvature $R[\tilde g_r]$, one starts with (5.5) in \cite{DJ},
$$
R[\tilde g_r] = R[\tilde g] - 2 \text{Ric}[\tilde g](N, N) + o(1)
$$
and
$$
\text{Ric}[\tilde g](N, N) = \frac {e^{2t}}4( \text{Ric}[g^+] (\nabla r, \nabla r) + \Delta t  + (n-2)(1 - (g^+(\nabla r, \nabla t))^2)(\nabla^2t(v, v)-1))
$$
where $N = \frac 12 e^t \nabla r$ and
$$\nabla r = g^+(\nabla r, \nabla t)\nabla t + \sqrt{1 - (g^+(\nabla r, \nabla t))^2}v$$ for some unit vector $v\perp\nabla t$. Hence, following the calculation in \cite{DJ} and
assuming
\begin{equation}\label{scalar-cur-asym}
\text{Ric}[g^+] \geq -(n-1)g^+ \text{ and } \ R[g^+] = - n(n-1) + o(e^{-2t}),
\end{equation}
one arrives at
\begin{equation}\label{estimate-0}
R[\tilde g_r] \leq (n-1)(n-2) + \frac {n-2}2 e^{2t}(1 - (g^+(\nabla r, \nabla t))^2)(1 - \nabla^2t(v, v)) + o(1),
\end{equation}
which implies \eqref{scalar-cur} whenever \eqref{2.3-dutta} is available. In fact it is clear that, any lower bound
of the principal curvature would yield a upper bound for the scalar curvature $R[\tilde g_t]$ from \eqref{estimate-0} and \eqref{angle-r-t}. To our best knowledge,
one does not have any a prior lower bound of the principal curavture, though one indeed can manage to get the desired upper bound for the principal curvature,
by the nature of the Riccati equations in general (cf. \cite{ST, HQS}). What we observe is that, smaller the principal curvature is; smaller the mean curvature is;
and therefore smaller the surface volume element is, at any smooth point of the distance function. Thus we will still be able to obtain the desired estimate
for the total scalar curvature
\begin{equation}\label{gap-2}
\int_{\Sigma_r\setminus C_{p_0}}(Rdv)[\tilde g_r] \leq (n-1)(n-2)\int_{\Sigma_r}dv[\tilde g_r] + o(1)
\end{equation}
where $o(1)\to$ as $r\to\infty$. We will organize this section into three subsections.

\subsection{Curvature estimates based on the Riccati equations}\label{riccati-equation} In this section we would like to derive the curvature estimates based on the
Riccati equations on AH manifolds. Let us start with the Riccati equation for the shape operator of the geodesic spheres along a geodesic $\gamma (t)$
(cf. for example, (2.1) in \cite{DJ} or \cite{ST, HQS, petersen})
\begin{equation}\label{riccati}
\nabla_{\nabla t} S + S^2 = - (R_{\nabla t})[g^+]
\end{equation}
where $S=\nabla^2t$ is the shape operator and $(R_{\nabla t})[g^+] (v) = (R(v, \nabla t)\nabla t)[g^+]$. Particularly, one has
\begin{equation}\label{riccati-1}
1 - C_0e^{-2t} \leq \mu'(t) + \mu^2(t) \leq 1 + C_0e^{-2t}
\end{equation}
for the principal curvature $\mu$, on an AH manifold with \eqref{cur-decay} (cf. (2.6) in \cite{DJ}).
We will use $\mu_m$ and $\mu_M$ to stand for the smallest and the biggest principal curvature respectively.
The step following (2.6) in the proof of (2.3) in \cite{DJ} does not seem to be correct to us. In the rest of this subsection we will present a careful study of
the Riccati equations and derive the upper bounds and lower bounds of the principal curvature that hold on AH manifolds.
\\

The first is a general fact for complete Riemmannian manifolds.

\begin{lem} Suppose that $(M^n, \ g)$ is a complete Riemannian manifold and $p\in M^n$ is a fixed point. For any given $t_0>0$, there is a constant $C$ such that
\begin{equation}\label{upper-bound}
\mu_M (q) \leq C
\end{equation}
for all $q\in \Gamma_{t_0}\setminus C_p$, where $\Gamma_{t_0}$ is the geodesic sphere of radius $t_0$ and $C_p$ is the cut loci of $p$ in $(M^n, \ g)$.
\end{lem}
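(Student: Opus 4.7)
My plan is to combine compactness of the closed ball $\bar B_{t_0}(p)$ with a scalar Riccati comparison along each minimizing geodesic from $p$.

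First, since $(M^n, g)$ is complete, Hopf--Rinow implies $\bar B_{t_0}(p)$ is compact, so the sectional curvature is uniformly bounded there: pick $K_0 > 0$ with $|R_{\nabla t}(v,v)| \le K_0$ for every unit tangent vector $v$ at any point of $\bar B_{t_0}(p)$. Now fix $q \in \Gamma_{t_0}\setminus C_p$; by definition of the cut locus there is a unique minimizing geodesic $\gamma:[0,t_0]\to M$ from $p$ to $q$ along which no point is conjugate to $p$, so the shape operator $S(t) = \nabla^2 t$ of the geodesic sphere at $\gamma(t)$ is smooth on $(0,t_0]$, satisfies the Riccati equation \eqref{riccati}, and has the standard Euclidean-like asymptotic $S(t) = t^{-1}\mathrm{Id} + O(t)$ as $t \to 0^+$.

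Next I would pass to the scalar Riccati inequality for the largest principal curvature $\mu_M(t)$. Choosing at each $t$ a unit eigenvector $v(t)$ for $\mu_M(t)$ (smoothly where $\mu_M$ is a simple eigenvalue, otherwise via the standard viscosity/upper-envelope device), testing \eqref{riccati} against $v(t)$ yields
\[
\mu_M'(t) + \mu_M^2(t) \le K_0 \quad \text{on } (0, t_0],
\]
with the initial behavior $\mu_M(t) = t^{-1} + O(t)$. The explicit solution of the model equation $\nu' + \nu^2 = K_0$ with the same singularity at $0$ is $\nu(t) = \sqrt{K_0}\coth(\sqrt{K_0}\, t)$, which also satisfies $\nu(t) = t^{-1} + O(t)$. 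Hence $w := \mu_M - \nu$ obeys $w' \le -(\mu_M + \nu)w$ with $w(t)\to 0$ as $t\to 0^+$; since $\mu_M + \nu \to +\infty$ fast enough to prevent a positive value of $w$ from propagating down to $0$, the standard Riccati comparison gives
\[
\mu_M(t) \le \nu(t) \quad \text{for all } t \in (0, t_0].
\]
Evaluating at $t = t_0$ yields the desired bound $\mu_M(q) \le \sqrt{K_0}\coth(\sqrt{K_0}\, t_0) =: C$, which depends only on $t_0$ and the curvature bound $K_0$ over $\bar B_{t_0}(p)$.

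The only genuine subtlety is justifying the scalar differential inequality across points where $\mu_M$ has higher multiplicity; this is a routine matter for symmetric matrix-valued Riccati flows and can be handled either in the viscosity sense for $\mu_M$ or by a small perturbation reducing to the simple case. Beyond this technicality I do not expect any serious obstacle, since the argument is a clean application of Riccati comparison on a compact ball of bounded sectional curvature.
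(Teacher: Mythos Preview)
Your argument is correct and in fact is the textbook Hessian comparison theorem in disguise: a lower sectional curvature bound $K\ge -K_0$ on the compact ball $\bar B_{t_0}(p)$ gives $\nabla^2 t \le \sqrt{K_0}\coth(\sqrt{K_0}\,t)\,g$ away from the cut locus, hence the explicit bound $\mu_M(q)\le \sqrt{K_0}\coth(\sqrt{K_0}\,t_0)$. The only point you flag yourself --- the differential inequality for $\mu_M$ across multiplicity changes --- is indeed routine (e.g.\ via the Lipschitz envelope $\mu_M(t)=\max_{|v|=1}\langle S(t)v,v\rangle$, which is differentiable a.e.\ with derivative $\langle S'(t)v,v\rangle$ at any maximizing eigenvector), and the Riccati comparison with matching $t^{-1}$ singularity at $0$ is standard.

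The paper takes a genuinely different route: it argues by contradiction, extracting a sequence $q_k\in\Gamma_{t_0}\setminus C_p$ with $\mu_M(q_k)\to\infty$, passing to a limiting minimizing geodesic $\gamma$, and then observing that at the midpoint $\gamma(\tfrac12 t_0)$ the distance function is smooth in a neighborhood (since $C_p$ is closed), so $\mu_M$ is bounded there uniformly along the approximating geodesics $\gamma_k$; the Riccati equation then prevents $\mu_M(\gamma_k(t_0))$ from blowing up. Your approach is more direct, yields an explicit constant depending only on $t_0$ and $\sup_{\bar B_{t_0}(p)}|K|$, and avoids any sequential compactness of geodesics. The paper's argument, by contrast, never needs the asymptotic $S(t)=t^{-1}\mathrm{Id}+O(t)$ at $t=0$ nor a global curvature bound on the ball --- it only uses local smoothness of $t$ at an interior point and forward propagation under \eqref{riccati} --- and its structure mirrors the kind of Riccati propagation used repeatedly later in Section~\ref{Sect:curvature-estimate}.
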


\begin{proof} Suppose otherwise that there is a sequence $q_k\in \Gamma_{t_0}\setminus C_p$ such that
$$
\mu_M(q_k) \to \infty.
$$
Without loss of generality one may assume that $q_k\to q_0\in \Gamma_{t_0}$ and the geodesic $\gamma_k(t)$ that realizes the distance and connects $q_k$ and $p$
converges to a minimizing geodesic $\gamma(t)$ from $q_0$ to $p$. Hence $q_0$ is the first cut locus of $p$ along $\gamma$. Set
$$
\mu = \mu_M(\gamma(\frac 12 t_0)).
$$
Since the set $C_p$ of cut loci of $p$ is closed, there is an open neighborhood $U$ of $\gamma(\frac 12t_0)$ where $t$ is smooth. One may assume that
$$
\mu_M(q)\leq 2\mu
$$
for all $q\in U$ (take a smaller set if necessary). Particularly
$$\gamma_k(\frac 12t_0)\in U \text{ and } \ \mu_M(\gamma_k(\frac 12t_0))\leq 2\mu
$$
for all sufficiently large $k$, which is easily seen to contradict with the fact that $$\mu_M(q_k)  = \mu_M(\gamma_k(t_0))\to\infty$$ in the light of the Riccati equation \eqref{riccati}.
So the proof is complete.
\end{proof}

Consequently we have the sharp upper bound for the principal curvature of geodesic spheres in AH manifolds.

\begin{cor} Suppose that $(X^n, \ g^+)$ is AH and $p\in X^n$ is a fixed point. And suppose that the curvature condition \eqref{cur-decay}
holds on $(X^n, \ g^+)$. Then, for $t_0>0$,  there exists a constant $C$ such that
\begin{equation}\label{sharp-upper-bound}
\mu_M (q) \leq 1 +C(t+1)e^{-2t}
\end{equation}
for all $q\in \Gamma_t\setminus C_p$ and all $t\geq t_0$.
\end{cor}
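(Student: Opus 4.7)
The plan is to propagate the uniform bound at radius $t_0$ provided by the preceding lemma forward in $t$ by a scalar ODE comparison along distance-realizing geodesics from $p$, using the Riccati inequality \eqref{riccati-1} together with an explicit supersolution whose profile is exactly $1+C(t+1)e^{-2t}$.

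Fix $q\in\Gamma_t\setminus C_p$ with $t\geq t_0$ and let $\gamma\colon[0,t]\to X^n$ be a minimal geodesic from $p$ to $q$. Every subarc $\gamma|_{[0,s]}$ still realizes the distance from $p$ to $\gamma(s)$, so $\gamma(s)\notin C_p$ for $s\in(0,t]$; in particular $t$ is smooth in a neighborhood of $\gamma((0,t])$ and the shape operator $S(s)=\nabla^2 t|_{\gamma(s)}$ is a smooth symmetric endomorphism of $\dot\gamma(s)^\perp$. For any unit vector $v_0\perp\dot\gamma(t_0)$, let $v(s)$ be its parallel transport along $\gamma$ and set $S_v(s)=\langle S(s)v(s),v(s)\rangle$. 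Contracting the tensorial Riccati equation \eqref{riccati} with $v$ and using the Cauchy--Schwarz inequality $\langle S^2v,v\rangle\geq S_v^2$ together with the curvature decay \eqref{cur-decay} yields the scalar inequality
$$S_v'(s)+S_v^2(s)\leq 1+C_0 e^{-2s},\qquad s\in[t_0,t],$$
with $S_v(t_0)\leq \mu_M(\gamma(t_0))\leq C_1$ for the constant $C_1$ supplied by the preceding lemma.

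Next I construct an explicit smooth supersolution in the form $\phi(s)=1+\tilde C(s+1)e^{-2s}$. A direct differentiation gives
$$\phi'(s)+\phi^2(s)=1+\tilde C e^{-2s}+\tilde C^2(s+1)^2 e^{-4s},$$
so $\phi'+\phi^2\geq 1+C_0 e^{-2s}$ whenever $\tilde C\geq C_0$. Choosing
$$\tilde C=\max\{\,C_0,\ (C_1-1)(t_0+1)^{-1}e^{2t_0}\,\}$$
also secures $\phi(t_0)\geq C_1\geq S_v(t_0)$. A standard scalar comparison for Riccati-type inequalities, obtained by factoring $S_v^2-\phi^2=(S_v+\phi)(S_v-\phi)$ and applying Gronwall on the maximal subinterval of $[t_0,t]$ on which $S_v\leq\phi$ (then ruling out its endpoint by continuity), gives $S_v(s)\leq\phi(s)$ throughout $[t_0,t]$. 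Specializing at $s=t$ and taking the supremum over unit $v_0\perp\dot\gamma(t_0)$ produces $\mu_M(q)\leq 1+\tilde C(t+1)e^{-2t}$, which is the claim with $C=\tilde C$.

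The main (essentially technical) obstacle is that the largest eigenvalue $\mu_M$ of $S$ is only Lipschitz at points where eigenvalues collide, so a pointwise ODE for $\mu_M$ itself is unavailable. This is precisely why the argument is carried out for the smooth scalar quantity $S_v$ associated with a fixed parallel unit vector, and the supremum is taken only at the very last step; the Cauchy--Schwarz step $\langle S^2v,v\rangle\geq S_v^2$ is exactly what allows the scalar Riccati inequality to survive from the tensorial one. The linear prefactor $(s+1)$ in the supersolution is dictated by the resonance in the perturbation $C_0 e^{-2s}$ of the limiting equation $\phi'+\phi^2=1$, and the computation shows that no slower-growing prefactor can simultaneously absorb this perturbation and an arbitrary finite initial datum at $t_0$.
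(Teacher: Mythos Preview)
Your proof is correct and follows the same overall strategy as the paper's: take the uniform bound at $t_0$ from the preceding lemma and push it forward along the minimizing geodesic using the Riccati inequality \eqref{riccati-1}. The execution differs in two minor ways. First, the paper's ODE step is a linearization rather than a supersolution comparison: it sets $z=\mu_M-1$, drops the nonpositive term $-z^2$ to get the linear inequality $z'+2z\leq C_0e^{-2s}$, and integrates with the factor $e^{2s}$, which produces the $(t+1)e^{-2t}$ profile immediately. Your explicit supersolution $\phi(s)=1+\tilde C(s+1)e^{-2s}$ lands on the same estimate via comparison; the linearization is a touch quicker, your route is equally valid. Second, you are more careful on a point the paper leaves implicit: since $\mu_M$ is only Lipschitz where eigenvalues cross, you work instead with the smooth scalar $S_v=\langle Sv,v\rangle$ for a fixed parallel unit field, use $\langle S^2v,v\rangle\geq S_v^2$ to recover the scalar Riccati inequality, and pass to the supremum only at the end. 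The paper simply writes $\mu_M'+\mu_M^2\leq 1+C_0e^{-2t}$ without comment, so your version is the more rigorous of the two on this technicality.
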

\begin{proof} From \eqref{riccati-1} one has
$$
\mu_M' + \mu_M^2 \leq 1 + C_0e^{-2t}.
$$
Then one may consider $z = \mu_M -1$ along the minimizing geodesic $\gamma (s)$ from $p$ to $q\in\Gamma_t$
and the equation \eqref{riccati-1} for $z$
$$
z' + 2z \leq C_0e^{-2s} - z^2 \leq C_0e^{-2s},
$$
which easily implies \eqref{sharp-upper-bound}. We remark that the constant $C$ may very in different places.
\end{proof}

Roughly speaking, the reason one has the sharp upper bound \eqref{sharp-upper-bound} is because $1$ is a sink for the equation
\begin{equation}\label{riccati-2}
\mu' + \mu^2 = 1.
\end{equation}
On the other hand, $-1$ is a source for the equation \eqref{riccati-2}, one could not expect a lower bound in general. In fact the best
one can say about the lower bound for the principal curvature is as follows:

\begin{prop}\label{no-lower-b} Suppose that $(X^n, \ g^+)$ is AH and $p\in X^n$ is a fixed point. And suppose that the curvature condition \eqref{cur-decay}
holds on $(X^n, \ g^+)$. Then
$$
\mu_m(q) \geq -\sqrt{1 + C_0e^{-2t}}
$$
for $q\in\Gamma_t$ and $q$ is on a minimizing geodesic ray that runs from $p$ to the infinity without intersecting the set $C_p$
of cut loci of $p$.
\end{prop}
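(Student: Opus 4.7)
The plan is to derive a one-sided scalar Riccati inequality for $\mu_m$ along the minimizing ray, compare with an explicit model ODE, and then contradict the absence of focal points. This runs parallel to the upper-bound argument already carried out for $\mu_M$, except that $-1$ is a \emph{source} rather than a sink of the model Riccati flow, so the bound has to be extracted from a blow-up argument rather than from direct exponential decay to equilibrium.

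First, let $\gamma$ be the minimizing ray from $p$, and at each $t$ pick a unit eigenvector $v_m(t)$ of the shape operator $S(t) = \nabla^2 t$ corresponding to $\mu_m(t)$. Contracting the matrix Riccati equation \eqref{riccati} with $v_m \otimes v_m$ and invoking the variational characterization $\mu_m(t) = \min_{|v|=1} \langle S(t)v,v\rangle$ (which makes $\mu_m$ a concave functional of $S$, yielding a one-sided Dini-derivative bound even at eigenvalue crossings) produces
$$\mu_m'(t) + \mu_m(t)^2 \;\le\; -\langle R(v_m,\nabla t)\nabla t,\, v_m\rangle \;\le\; 1 + C_0 e^{-2t},$$
where the second inequality uses \eqref{cur-decay} applied to the sectional curvature $K(v_m\wedge \nabla t) \ge -1-C_0 e^{-2t}$. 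Since $\mu_m$ is Lipschitz along $\gamma$, this inequality is valid in the integrated sense that suffices for ODE comparison.

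Second, assume for contradiction that $\mu_m(t_0) < -\sqrt{1+C_0 e^{-2t_0}}$ at some $t_0 \ge 0$. Let $y(t)$ solve the ODE $y' = 1+C_0 e^{-2t} - y^2$ with $y(t_0) = \mu_m(t_0)$; Riccati comparison gives $\mu_m \le y$ on the common interval of existence. Setting $z = -y$, one has $z(t_0) > \sqrt{1+C_0 e^{-2t_0}}$ and $z' = z^2 - 1 - C_0 e^{-2t}$, so $z'(t_0) > 0$. A barrier argument shows that $z$ stays strictly above the threshold $\sqrt{1+C_0 e^{-2t}}$ for all later $t$: at a putative first crossing one would have $z' = 0$ while the threshold has strictly positive derivative $\tfrac{C_0 e^{-2t}}{\sqrt{1+C_0 e^{-2t}}}$, which is impossible. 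Hence $z^2 \ge 1+C_0 e^{-2t}$ for all $t>t_0$, the gap widens, and once $z \ge 2$ one has $z' \ge z^2/2$, forcing $z \to +\infty$ in finite time.

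Consequently $\mu_m \le y$ blows down to $-\infty$ at some finite $t_* > t_0$. But along the minimizing ray $\gamma$ disjoint from $C_p$, the distance function $t$ is smooth and $S = \nabla^2 t$ has bounded eigenvalues on each compact sub-interval; equivalently, $\mu_m(t) \to -\infty$ at $t_*$ would mean a Jacobi field along $\gamma$ vanishing at $t_*$, i.e., a conjugate point, past which $\gamma$ could not be minimizing. This contradicts the hypothesis, and therefore $\mu_m(t) \ge -\sqrt{1+C_0 e^{-2t}}$ for every $t$. The main technical nuisance is the possible non-smoothness of $\mu_m$ at eigenvalue crossings, handled by the standard one-sided Dini-derivative formulation of the comparison principle; the remaining ingredients are purely scalar ODE calculus.
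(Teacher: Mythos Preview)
Your approach is essentially the paper's: assume $\mu_m(t_0) < -\sqrt{1+C_0e^{-2t_0}}$, use the upper Riccati inequality $\mu_m'+\mu_m^2\le 1+C_0e^{-2t}$ to drive $\mu_m\to-\infty$ in finite time, and contradict the fact that the minimizing ray $\gamma$ extends smoothly past that time. The paper compresses the blow-down into the phrase ``it is not hard to show''; you have filled in the comparison ODE and the Dini-derivative care at eigenvalue crossings.

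One small correction in your barrier step: the threshold $\sqrt{1+C_0e^{-2t}}$ has derivative $-C_0e^{-2t}/\sqrt{1+C_0e^{-2t}}<0$, not positive as you wrote. With the correct sign the first-crossing condition reads $z'=0\le(\text{threshold})'<0$, which is the genuine contradiction; had the derivative really been positive your stated argument would not close. The rest of the proof is unaffected.
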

\begin{proof} Suppose that $\gamma$ is a minimizing geodesic that runs from $p$ to the infinity without intersecting the set $C_p$
of cut loci of $p$. Assume otherwise
$$
\mu_m (\gamma(t)) < - \sqrt{1 +C_0e^{-2t}}.
$$
Then, based on \eqref{riccati-1},  it is not hard to show that $\mu_m(\gamma(s))$ goes to $-\infty$ in a finite time after $t$,
which is a contradiction.
\end{proof}

\subsection{Volume estimates where the mean curvature is not big enough} In this section we are concerned with the set
$$
U^\delta_r = \{q\in \Sigma_r\setminus C_p: H(q) < (n-1)( 1- \delta)\}
$$
where $H = \Delta t$. We would like to show that
$$
\int_{U^\delta_r} dv[\tilde g_r] \to 0
$$
as $r\to\infty$.
\\

We first notice from \eqref{angle-r-t} in the proof of Lemma \ref{liptschitz} that
\begin{equation}
\|dv[g^+](\nabla t) - dv[g^+](\nabla r)\|_{g^+} \leq Ce^{-t}
\end{equation}
and
\begin{equation}\label{com-vol-element}
|\frac {2^{n-1}e^{-(n-1)t}dv[g^+](\nabla r)} { 2^{n-1}e^{-(n-1)t}dv[g^+](\nabla t)|_{(\nabla r)^\perp}} - 1|
= |\frac {dv[g^+](\nabla r)} {dv[g^+](\nabla t)|_{(\nabla r)^\perp}} - 1|
\leq Ce^{-2t},
\end{equation}
where
\begin{equation}\label{volume-element}
dv[\tilde g_r] = 2^{n-1}e^{-(n-1)t}dv[g^+](\nabla r)
\end{equation}
and $(\nabla r)^\perp$ stands for the hyperplane perpendicular to $\nabla r$. This gives us the clue how the mean curvature
$H = \Delta t$ is related to the volume element $dv[\tilde g_r]$
on $\Sigma_r$. We now focus on the behavior of the (n-1)-form $dv[g^+](\nabla t)$ along geodesics $\gamma$ from $p$.
\\

To be more clearer we set a parallel orthonormal frame $\{\omega_1,
\omega_2\cdots, \omega_{n-1}\}$ along a geodesic $\gamma$ for the dual of the subspace perpendicular to the geodesic $\gamma$. Then we may write
$$
dv[g^+](\nabla t) = {\mathcal J}\omega_1\cdot\omega_2\cdots\omega_{n-1}.
$$
Now we recall from the first variation formula in Riemannian geometry that
\begin{equation}\label{riccati-4}
\frac {d{\mathcal J}}{dt} = H {\mathcal J}.
\end{equation}
To state the key observation for the volume estimate, for each small $\delta$, we let $t_\delta$ be a fixed large number such that
$$
(n-2)C(t+1)e^{-2t} \leq \frac \delta 4 \text{ and } 1 - C_0e^{-2t}\geq (1 - \frac 12\delta)^2
$$
for all $t\geq t_\delta$, where $C$ is the constant in \eqref{sharp-upper-bound} and $C_0$ is the constant in \eqref{cur-decay}.

\begin{lem} Suppose that $(X^n, \ g^+)$ ($n\geq 4$) is AH of $C^3$ regularity and $p\in X^n$ is a fixed point. And suppose that the curvature condition \eqref{cur-decay} holds.
Let $q\in U^\delta_r$ and $\gamma(s)$ be the minimizing geodesic
from $p$ to $q = \gamma(t)$ for $t> t_\delta$. Then
\begin{equation}\label{small-H}
H(\gamma(s))\leq (n-1)( 1 - \frac \delta{4(n-1)})
\end{equation}
for all $s\in (t_\delta, t)$.
\end{lem}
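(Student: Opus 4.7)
The plan is a forward‑propagation argument along the minimizing geodesic $\gamma$, combined with a proof by contradiction. Suppose, contrary to the conclusion, that there is some $s_0\in(t_\delta,t)$ with $H(\gamma(s_0))>(n-1)-\delta/4$. I will derive $H(\gamma(t))\geq(n-1)(1-\delta)$, contradicting the hypothesis $q\in U^\delta_r$.

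First I would trace the Riccati identity $\nabla_{\nabla t}S+S^2=-R_{\nabla t}[g^+]$ along $\gamma$ to obtain the scalar equation
\[
H'(s)+\sum_{i=1}^{n-1}\mu_i^2(s)=-\operatorname{Ric}[g^+](\nabla t,\nabla t)(\gamma(s)).
\]
The curvature decay \eqref{cur-decay} gives $|\operatorname{Ric}(\nabla t,\nabla t)+(n-1)|\leq(n-1)C_0 e^{-2s}$, and the sharp upper bound \eqref{sharp-upper-bound} provides $\mu_i(s)\leq 1+C(s+1)e^{-2s}$. For the matching lower side $\mu_i(s)\geq-1-O((s+1)e^{-2s})$ along the minimizing segment, I would invoke (the proof of) Proposition \ref{no-lower-b}: if at some interior point $\mu_m$ dropped appreciably below $-1$, the scalar Riccati inequality $\mu_m'\leq-\mu_m^2+1+C_0 e^{-2s}$ would force $\mu_m\to-\infty$ within a finite time, creating a conjugate point of $p$ and contradicting the absence of such points along the minimizing segment $\gamma|_{[0,t]}$. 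Summing then yields
\[
\sum_{i=1}^{n-1}\mu_i^2(s)\leq(n-1)\bigl(1+O_{n,C,C_0}((s+1)e^{-2s})\bigr),
\]
and consequently the lower Riccati inequality $H'(s)\geq -C_1(s+1)e^{-2s}$ for some $C_1=C_1(n,C,C_0)$.

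Integrating this from $s_0$ to $t$,
\[
H(\gamma(t))\geq H(\gamma(s_0))-C_2(s_0+1)e^{-2s_0},
\]
with $C_2$ of the same nature. The first defining condition on $t_\delta$, namely $(n-2)C(s+1)e^{-2s}\leq\delta/4$ for $s\geq t_\delta$, forces $C_2(s_0+1)e^{-2s_0}\leq C_3\delta$ for a fixed $C_3$, while the second condition $1-C_0 e^{-2t}\geq(1-\delta/2)^2$ absorbs the remaining quadratic error terms. For $n\geq 4$ we then have
\[
H(\gamma(t))>(n-1)-\tfrac{\delta}{4}-C_3\delta\geq(n-1)(1-\delta)
\]
as soon as $C_3\leq n-\tfrac{5}{4}$, which can be arranged by taking $t_\delta$ larger if necessary. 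This contradicts $H(\gamma(t))<(n-1)(1-\delta)$ and completes the argument.

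The hard part will be making the uniform lower bound $\mu_m(s)\geq-1-O((s+1)e^{-2s})$ on $[t_\delta,t]$ fully rigorous: a naive Riccati blow‑up argument along $\gamma|_{[0,t]}$ alone gives only $\mu_m(s)>-\coth(t'-s)$, where $t'$ is the first conjugate point of $p$ along the geodesic extension of $\gamma$ past $q$, and this degenerates near $s=t$ if $t'$ is close to $t$. The resolution is to use the AH structure and completeness to extend $\gamma$ well beyond $t$ so that $t'$ is uniformly bounded away from $t$ (making Proposition \ref{no-lower-b} applicable), or, equivalently, to run the Riccati analysis backward from a point just past $t$.
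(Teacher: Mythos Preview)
Your contradiction framework matches the paper's, but the justification of the key lower bound on $\mu_m$ along $[s_0,t]$ does not go through as written, and your proposed fixes do not work.

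The invocation of Proposition~\ref{no-lower-b} is illegitimate: that proposition requires the geodesic to be a minimizing \emph{ray} to infinity, whereas here $\gamma$ is only minimizing up to $q=\gamma(t)$. Your suggested remedy, to extend $\gamma$ past $q$ so that the first conjugate time $t'$ is uniformly separated from $t$, is exactly what fails in this setting. The point $q\in\Sigma_r\setminus C_p$ can lie arbitrarily close to a conjugate point of $p$; indeed, the whole purpose of the next subsection (Proposition~\ref{H-very-neg}) is to handle points $q$ where $H(q)\leq -2(n-1)$ and $\mu_m(q)$ is very negative. For such $q$ the Riccati blow-up occurs at some $t'$ just past $t$, so no uniform separation $t'-t\geq c>0$ is available, and your bound $\mu_m(s)\geq -1-O((s+1)e^{-2s})$ on $[t_\delta,t]$ is simply false in general. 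Running Riccati ``backward from a point just past $t$'' has the same defect: there may be no such point before blow-up.

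The paper avoids this entirely by extracting the lower bound on $\mu_m$ from the \emph{contradiction hypothesis itself}. From $H(\gamma(s_0))>(n-1)-\tfrac{\delta}{4}$ and the sharp upper bound \eqref{sharp-upper-bound} on the remaining $n-2$ eigenvalues (together with the first defining inequality for $t_\delta$) one gets directly
\[
\mu_m(\gamma(s_0))\;>\;(n-1)-\tfrac{\delta}{4}-(n-2)-\tfrac{\delta}{4}\;=\;1-\tfrac{\delta}{2}.
\]
Then the scalar Riccati inequality $\mu_m'+\mu_m^2\geq 1-C_0e^{-2s}\geq(1-\tfrac{\delta}{2})^2$ on $(t_\delta,t)$ shows that $\mu_m$ cannot cross below $1-\tfrac{\delta}{2}$, hence $\mu_m(\gamma(t))>1-\delta$ and $H(\gamma(t))\geq (n-1)\mu_m(\gamma(t))>(n-1)(1-\delta)$, the desired contradiction. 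Note that once this observation is made, your detour through the traced Riccati equation and the integration of $H'$ becomes unnecessary: the forward propagation of $\mu_m$ alone already yields the contradiction.
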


\begin{proof} Assume otherwise, for some $s\in (t_\delta, t)$ such that
$$
H(\gamma(s)) > (n-1)( 1 - \frac \delta{4(n-1)}).
$$
Using the sharp upper bound \eqref{sharp-upper-bound} we conclude that
\begin{equation}\label{too-large-lb}
\mu_m(\gamma(s)) > 1 - \frac {\delta} 2.
\end{equation}
It is then easily seen that \eqref{too-large-lb} implies
$$
\mu_m(q) = \mu_m(\gamma(t))\geq 1 -\frac 34\delta >1 - \delta.
$$
Because, from \eqref{riccati-1}, one has
$$
\mu_m' + \mu_m^2 \geq 1 - C_0e^{-2t} \geq (1- \frac 12\delta)^2
$$
in $(t_\delta, t)$, which means $\mu_m'$ is positive whenever $\mu_m\in (0, 1 - \frac 12\delta)$.
\end{proof}

Consequently, from \eqref{volume-element}, \eqref{com-vol-element}, and \eqref{riccati-4}, considering $U_r^{\delta}$ as a graph on a subset of $\Gamma_{t_{\delta}}$ induced by exponential map at $p$, we have

\begin{prop} \label{vol-upper} Suppose that $(X^n, \ g^+)$ is an AH manifold of $C^3$ regularity and $p\in X^n$ is a fixed point. And suppose that the curvature condition \eqref{cur-decay} holds. Then
\begin{equation}\label{s-volume-growth}
{\mathcal J}(q) \leq {\mathcal J}(\gamma(t_\delta))e^{(n-1)(1 - \frac \delta{4(n-1)}) (t-t_\delta)}
\end{equation}
for $q\in \Gamma_t\setminus C_p$ with $H < (n-1)(1-\delta)$ and $\gamma$ is the minimizing geodesic from $p$ to $q$. Thus, for each fixed small $\delta >0$,
\begin{equation}\label{s-volume}
\int_{U^\delta_r} dv[\tilde g_r] \to 0
\end{equation}
as $r\to\infty$.
\end{prop}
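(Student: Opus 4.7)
My approach splits into two ingredients: integrate the Jacobi ODE \eqref{riccati-4} along a minimizing geodesic to upgrade the pointwise mean curvature bound furnished by \eqref{small-H} into the Jacobian bound \eqref{s-volume-growth}, and then pass to geodesic polar coordinates at $p$ to translate this pointwise growth bound into a volume estimate on $\Sigma_r$.

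For \eqref{s-volume-growth}, I would note that \eqref{riccati-4} reduces, along the minimizing geodesic $\gamma$ from $p$ to $q=\gamma(t)$, to a first-order linear ODE $\mathcal{J}' = H\mathcal{J}$. Separating variables and inserting the pointwise bound $H(\gamma(s)) \leq (n-1)\bigl(1 - \tfrac{\delta}{4(n-1)}\bigr)$ on $(t_\delta, t)$ furnished by \eqref{small-H} gives \eqref{s-volume-growth} immediately.

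For \eqref{s-volume}, I would pass to geodesic polar coordinates $(s,\xi)$ at $p$, writing $dv[g^+] = \mathcal{J}(s,\xi)\,ds \wedge d\sigma_{S^{n-1}}(\xi)$, so that the $(n-1)$-form $dv[g^+](\nabla t)$ on $\Gamma_t$ is $\mathcal{J}(t,\xi)\,d\sigma_{S^{n-1}}$. Each point $q \in \Sigma_r\setminus C_p$ is represented by a unique direction $\xi(q) \in S^{n-1}$ with $q = \exp_p(t(q)\xi(q))$, and then \eqref{volume-element} together with \eqref{com-vol-element} give
\[
dv[\tilde g_r]=\bigl(1 + O(e^{-2t(q)})\bigr)\,2^{n-1}e^{-(n-1)t(q)}\,\mathcal{J}(t(q),\xi)\,d\sigma_{S^{n-1}}(\xi).
\]
Substituting \eqref{s-volume-growth} collapses the factor $e^{-(n-1)t(q)}\mathcal{J}(t(q),\xi)$ into $C(t_\delta)\,e^{-\frac{\delta}{4}(t(q)-t_\delta)}$, where $C(t_\delta)$ is a uniform upper bound for $\mathcal{J}(t_\delta,\cdot)$ on the compact geodesic sphere $\Gamma_{t_\delta}$. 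Since $|t(q)-r|$ is uniformly bounded by the triangle inequality and Lemma \ref{liptschitz}, this quantity decays exponentially in $r$, and integrating against $d\sigma_{S^{n-1}}$ over a subset of the compact unit sphere yields \eqref{s-volume}.

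The main technical issue, beyond the routine ODE step, is the measure-theoretic bookkeeping that justifies computing $\int_{U^\delta_r} dv[\tilde g_r]$ via this polar parametrization despite the presence of cut loci. This is handled by the observation at the close of Section \ref{Sect:normal-cut-point} that $\mathcal{H}^{n-1}(\Sigma_r \cap C_p)=0$, so the singular set contributes nothing to the integral, while on $U^\delta_r \setminus C_p$ the exponential map from $p$ is a diffeomorphism onto its image and the displayed identity above is valid pointwise.
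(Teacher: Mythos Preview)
Your proposal is correct and follows the same route as the paper: integrate \eqref{riccati-4} against the mean curvature bound \eqref{small-H} to obtain \eqref{s-volume-growth}, then pull back $dv[\tilde g_r]$ to the unit sphere via the exponential map at $p$ using \eqref{volume-element} and \eqref{com-vol-element}, which is exactly the paper's ``graph over a subset of $\Gamma_{t_\delta}$'' description. Your write-up is in fact more explicit than the paper's one-line justification, and your remark on the measure-zero contribution of $\Sigma_r\cap C_p$ correctly handles the only delicate point.
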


\subsection{Volume estimates where the mean curvature is very negative} If there were a lower bound for the mean curvature, then \eqref{estimate-0} and \eqref{sharp-upper-bound}
would yield a upper bound the scalar curvature $R[\tilde g_r]$ and one would have by now completed the proof of \eqref{gap-2}. Therefore we need to estimate the lower bound
of $Hdv[g^+](\nabla t)$ at least when the mean curvature $H$ is very negative. First of all one easily derives from the first variational formula \eqref{riccati-4} that, along a geodesic
$\gamma(s)$,
\begin{equation}\label{riccati-5}
\frac{d}{ds} (Hdv[g^+](\nabla t)) = (H' + H^2)dv[g^+](\nabla t).
\end{equation}

The key observation for this subsection is the following more detailed statement of Proposition \ref{no-lower-b} about the Riccati equation:

\begin{lem}\label{uniform-lb} Suppose that $(X^n, \ g^+)$ is AH and $p\in X^n$ is a fixed point. And suppose that the curvature condition \eqref{cur-decay} holds. Then, for any $t_0>0$,
 there is a constant $C>0$,
for $q\in \Gamma_t\setminus C_p$ and $\gamma$ being the minimizing geodesic from $p$ to $q$, one always has
\begin{equation}\label{cur-t-1}
\mu_m(\gamma(t-1)) \geq - C,
\end{equation}
provided that $t> t_0 +1$.
\end{lem}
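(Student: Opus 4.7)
\medskip
\noindent\emph{Proof plan for Lemma \ref{uniform-lb}.} The plan is to argue by contradiction: assuming $\mu_m(\gamma(t-1))$ is too negative, I will use the Riccati inequality \eqref{riccati-1} applied to the smallest principal curvature to force a blow-up of $\mu_m$ to $-\infty$ within a time interval of length less than $1$. Since a minimizing geodesic cannot contain a focal (conjugate) point of $p$ in its interior, this contradicts $q = \gamma(t)$ being the end of a minimizing segment. The conclusion is a uniform lower bound $\mu_m(\gamma(t-1)) \geq -C$ with $C$ depending only on $t_0$ and the constant $C_0$ from \eqref{cur-decay}.

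The quantitative step is a straightforward ODE comparison. For $s \geq t_0$, \eqref{riccati-1} applied to $\mu_m$ gives
\begin{equation*}
\mu_m'(s) \leq 1 + C_0 e^{-2s} - \mu_m(s)^2 \leq (1+C_0 e^{-2t_0}) - \mu_m(s)^2.
\end{equation*}
Setting $y(s) = -\mu_m(\gamma(s))$ and $A = 1 + C_0 e^{-2t_0}$, I get $y'(s) \geq y(s)^2 - A$. Suppose toward contradiction that $y(t-1) = M$ with $M \geq \sqrt{2A}$; then $y^2 - A \geq y^2/2$ for all $s$ where $y(s) \geq M$, and since the inequality is a lower bound for $y'$, $y$ is increasing as long as it remains above this threshold. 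Integrating $y'/y^2 \geq 1/2$ from $t-1$ yields
\begin{equation*}
\frac{1}{y(s)} \leq \frac{1}{M} - \frac{s-(t-1)}{2},
\end{equation*}
so $y$ must blow up to $+\infty$ at some $s^\ast \leq t-1 + 2/M$. Taking $M > 2$ (in addition to $M \geq \sqrt{2A}$) gives $s^\ast < t$. Hence $C := \max\{\sqrt{2A},\, 3\}$ is the desired constant.

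The final step is to identify blow-up of $\mu_m$ with the failure of minimality, which is the only conceptual point. Since $\mu_m(\gamma(s))$ is the smallest eigenvalue of the shape operator $S = \nabla^2 t$ along $\gamma$, its tending to $-\infty$ at $s^\ast$ corresponds to the existence of a nontrivial Jacobi field along $\gamma$ vanishing at $s=0$ and degenerating at $s = s^\ast$; equivalently, $\exp_p$ is singular at $s^\ast\dot\gamma(0)$ and $\gamma(s^\ast)$ is a conjugate point of $p$ along $\gamma$. But $\gamma|_{[0,t]}$ was assumed to be a minimizing geodesic to $q = \gamma(t)$, and minimizing geodesics do not contain interior conjugate points of their starting point. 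This is a contradiction, completing the proof. The main obstacle (and the only non-routine piece) is this last identification, but it is a standard consequence of Jacobi field theory and the definition of the shape operator of geodesic spheres as a matrix Riccati operator, and may be invoked directly.
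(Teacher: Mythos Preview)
Your argument is correct and follows essentially the same route as the paper: both argue by contradiction, using the Riccati inequality \eqref{riccati-1} to show that if $\mu_m(\gamma(t-1))$ were too negative it would blow up to $-\infty$ before time $t$, contradicting that $\gamma|_{[0,t]}$ is minimizing with $q\notin C_p$. The only cosmetic difference is that the paper compares against the explicit $\tanh$-type solution of $\mu'+\mu^2=a^2$, whereas you use the cruder but equally effective estimate $y'\geq y^2/2$ once $y\geq\sqrt{2A}$.
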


\begin{proof} To start, one considers the Riccati equation \eqref{riccati-1} along the geodesic $\gamma$ when $\mu (s_0) < -\sqrt{1 + C_0e^{-2s_0}}$ with $s_0=t-1>t_0$. Recall
$$
\mu' + \mu^2 \leq 1 + C_0e^{-2s}\leq 1 + C_0e^{-2s_0}
$$
for $s > s_0$, which implies
$$
\mu(s) \leq - a \frac {1 + \chi e^{2a(s -s_0)}}{1 - \chi e^{2a(s-s_0)}}
$$
where $a = \sqrt{1+C_0e^{-2s_0}}$ and $\chi =  - \frac {a + \mu(s_0)}{a - \mu(s_0)} \in(0, 1)$. Therefore it is clear that $\mu$ reaches $-\infty$ at
$$
s_1 = t-1  - \frac{1}{2a} \log\chi
$$
and
$$
\lim_{\mu(t-1)\to -a} \chi= 0 \text{ and } \lim_{\mu(t-1)\to -\infty} \chi = 1.
$$
Thus
$$
-\frac{1}{2a}\\log \chi < 1
$$
when $\mu(t-1)$ is sufficient negative, which implies, there is a constant $C=C(t_0)$ such that
$$
\mu_m(\gamma(t-1)) > -C
$$
for $\gamma(t)\in \Gamma_t\setminus C_p$ and $t>t_0+1$.
\end{proof}

Now we are ready to deal with the volume estimate at places the mean curvature is very negative.

\begin{prop} \label{H-very-neg} Suppose that $(X^n, \ g^+)$ is AH and $p\in X^n$ is a fixed point. And suppose that the curvature condition \eqref{cur-decay} holds. Then there is a constant $C>0$
such that, for $q\in \Gamma_t\setminus C_p$ with $H(q)\leq - 2(n-1)$ and $\gamma$ being the minimizing geodesic from $p$ to $q$,
\begin{equation}\label{H-lower-b}
H{\mathcal J} (\gamma(t)) \geq -C e^{(n-1)(1 - \frac\delta{4(n-1)})t},
\end{equation}
provided that $t> t_\delta+1$.
\end{prop}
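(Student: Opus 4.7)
The plan is to propagate control of $y(s) := H(\gamma(s))\mathcal{J}(\gamma(s))$ from $s = t-1$ to $s = t$ via a one-sided Gronwall estimate: first secure initial data at $s = t-1$ and a uniform volume bound on $[t-1, t]$, then derive a \emph{linear} (not quadratic) lower bound on $y'$ from a sharp analysis of the Riccati equation. At $s = t-1$, Lemma \ref{uniform-lb} combined with the sharp upper bound \eqref{sharp-upper-bound} places every principal curvature in the uniform interval $[-C, 1+o(1)]$, so $|H(\gamma(t-1))| \leq C_1$ and $|y(t-1)| \leq C_1\mathcal{J}(\gamma(t-1))$. Moreover, the hypothesis $H(q) \leq -2(n-1)$ satisfies $H(q) < (n-1)(1-\delta)$, so the Lemma preceding Proposition \ref{vol-upper} forces $H(\gamma(s)) \leq A := (n-1)(1 - \frac{\delta}{4(n-1)})$ throughout $(t_\delta, t)$; integrating $\mathcal{J}' = H\mathcal{J}$ then gives $\mathcal{J}(\gamma(s)) \leq Ce^{As}$ on that interval.

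Next I would derive the key lower bound on $y'$ from the Riccati equation. Taking the trace of \eqref{riccati} and using \eqref{cur-decay} gives $H' + |S|^2 = (n-1) + O(e^{-2s})$. The crucial step is a sharp upper estimate on $|S|^2$: the constraint $\mu_i \leq \mu_M \leq 1 + O(e^{-2s})$ from \eqref{sharp-upper-bound} restricts the spectrum to a compact polytope in $\{\sum \mu_i = H\}$, and maximizing the convex quadratic $\sum \mu_i^2$ at its vertices (where $n-2$ eigenvalues saturate $1+o(1)$ and one absorbs $H - (n-2)(1+o(1))$) yields
\begin{equation*}
|S|^2 \leq H^2 + 2(n-2)|H| + C.
\end{equation*}
Hence $H' + H^2 = H^2 - |S|^2 - \mathrm{Ric}[g^+](\nabla t, \nabla t) \geq -2(n-2)|H| - C$, and multiplying by $\mathcal{J}$ gives
\begin{equation*}
y'(s) \geq -2(n-2)\,|y(s)| - C\mathcal{J}(s), \qquad s \in [t-1, t].
\end{equation*}

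To conclude, I would introduce $\phi(s) := \max(-y(s), 0)$, a nonnegative Lipschitz function. On $\{y < 0\}$, $\phi' = -y' \leq 2(n-2)\phi + C\mathcal{J}$; on $\{y \geq 0\}$, $\phi' = 0$ almost everywhere. Hence $\phi' \leq 2(n-2)\phi + C\mathcal{J}$ a.e., and Gronwall's inequality yields
\begin{equation*}
\phi(t) \leq e^{2(n-2)}\phi(t-1) + C\!\int_{t-1}^t e^{2(n-2)(t-\tau)}\mathcal{J}(\tau)\,d\tau \leq C'\,e^{At},
\end{equation*}
using $\phi(t-1) \leq C_1\mathcal{J}(\gamma(t-1)) \leq Ce^{A(t-1)}$ and $\mathcal{J}(\tau) \leq Ce^{A\tau}$ from the first paragraph. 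Since $H(\gamma(t)) \leq -2(n-1) < 0$ forces $y(t) < 0$, one has $\phi(t) = -y(t)$, so $H\mathcal{J}(\gamma(t)) = y(t) \geq -C'\,e^{At}$, as desired.

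The hardest part is the linear-in-$|H|$ lower bound of the second paragraph. A naive Cauchy-Schwarz estimate $|S|^2 \leq (n-1)\mu_m^2 \lesssim CH^2$ would yield only $y' \geq -C|H||y|$, a nonlinear Gronwall with effective rate $\sim |H|$ that produces at best $y(t) \geq -Ce^{(n-1)t}$, destroying the decay factor $e^{-\delta t/4}$ inherited from Proposition \ref{vol-upper}. The sharp upper bound on individual eigenvalues from \eqref{sharp-upper-bound} is thus essential: it confines any possible blow-up to a single eigenvalue and reduces the off-diagonal contribution in $|S|^2 - H^2$ to linear order in $|H|$, precisely what the linear Gronwall can absorb while preserving the exponent $A$.
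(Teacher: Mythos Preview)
Your proof is correct and follows essentially the same strategy as the paper: bound the initial data at $s=t-1$ via Lemma~\ref{uniform-lb}, control $\mathcal{J}(\gamma(s))\le Ce^{As}$ on $[t-1,t]$ via the lemma preceding Proposition~\ref{vol-upper}, derive a linear-in-$H$ lower bound for $H^2-|S|^2$ from the sharp eigenvalue upper bound \eqref{sharp-upper-bound}, and integrate the resulting differential inequality. The only notable difference is in that algebraic step: the paper obtains $H^2-|S|^2=\sum_{i\neq j}\mu_i\mu_j\ge 2(n-1)^2\mu_m\ge 2(n-1)^2 H-C$ by discarding the nonnegative cross terms and using $\mu_m\ge H-(n-2)\mu_M$, which gives $y'\ge 2(n-1)^2 y-C\mathcal{J}$ and allows a direct integrating factor without your $\phi=(-y)_+$ device; your vertex-of-the-simplex argument yields the same type of bound with a sharper constant but an absolute value, hence the extra Gronwall step.
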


\begin{proof} Using Riccati equation and \eqref{riccati-5}, we end at
$$
\frac{d}{ds} (H{\mathcal J}) = (-\text{Ric}[g^+] (\nabla t, \nabla t) - |\nabla^2 t|^2 + H^2){\mathcal J}
$$
for $s\in (t-1, t)\subset(t_\delta, t)$. Hence we are looking for lower bound for $(\Delta t)^2 - |\nabla^2 t|^2$. Let  $$\{\mu_i: i=1, \cdots, n-1\}$$ be the eigenvalues of
$\nabla^2 t$.  We may calculate
$$
(\Delta t)^2 - |\nabla^2 t|^2 = \displaystyle\sum_{i\neq j} \mu_i\mu_j \geq \sum_{\mu_i\cdot\mu_j <0}\mu_i\mu_j\geq 2(n-1)^2\mu_m\geq 2(n-1)^2H - C
$$
for some constant $C>0$, where the last inequality uses the sharp upper bound \eqref{sharp-upper-bound}. Notice also that $H\leq -2(n-1)$ yields that
$\mu_m \leq -2$, which guarantees that there is always some negative eigenvalues. Therefore by $(\ref{cur-decay})$, we obtain
\begin{equation}\label{good-ode}
\frac{d}{ds} (H{\mathcal J}) \geq 2(n-1)^2H{\mathcal J} - C{\mathcal J}
\end{equation}
for some constant $C>0$, which can be rewritten as follows:
$$
\frac d{ds}(e^{-2(n-1)^2s}H{\mathcal J}) \geq - C e^{-2(n-1)^2s}{\mathcal J}
$$
for $s\in(t-1, t)$ Thus
\begin{equation}\label{integrated-1}
H{\mathcal J} (\gamma(t)) \geq  e^{2(n-1)^2}H{\mathcal J}(\gamma(t-1)) -Ce^{2(n-1)^2t} \int_{t-1}^t e^{-2(n-1)^2s} {\mathcal J}(\gamma(s))ds.
\end{equation}
Now, in the light of Lemma \ref{uniform-lb}, \eqref{H-lower-b} is proven if \eqref{s-volume-growth} in Proposition \ref{vol-upper} is applicable to all points $\gamma(s): s\in [t-1, t]$.
To see \eqref{s-volume-growth} holds at each point $\gamma(s): s\in [t-1, t]$, one uses that $\mu_m (\gamma(t)) < -2$ and the Riccati equation \eqref{riccati-1} for $t-1 > t_\delta$.
Because one gets that $\mu_m(\gamma(s)) < -1+C_0e^{-2t_{\delta}}$ and hence $H(\gamma(s))\leq (n-1)(1-\delta)$ for all $s\in[t-1, t]$, at least when $\delta$ is small enough.
\end{proof}

Combining Proposition \ref{vol-upper} and Proposition \ref{H-very-neg} we thus have obtained

\begin{thm} \label{step-two} Suppose that $(X^n, \ g^+)$ is AH of $C^3$ regularity and $p\in X^n$ is a fixed point. And suppose that \eqref{scalar-cur-asym} holds.
Then for almost all large $r>0$ so that $\mathcal {H}^{n-2}(\gamma_r^{QL})=0$, it holds that
\begin{equation}\label{total-sclar-cur}
\int_{\Sigma_r\setminus(B^r_\epsilon\cup N_p)}(Rdv)[\tilde g_r] \leq (n-1)(n-2)\int_{\Sigma_r}dv[\tilde g_r] + o_r(1),
\end{equation}
where $o_r(1)$ is independent of $\epsilon>0$ and $o_r(1)\to 0$ as $r\to\infty$.
\end{thm}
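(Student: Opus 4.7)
The plan is to establish a pointwise scalar curvature bound in terms of the mean curvature $H$, then integrate by decomposing $\Sigma_r$ into regions where either $H$ is nearly maximal, $H$ is moderate but below the good threshold, or $H$ is very negative, controlling each piece respectively by the sharp upper bound \eqref{sharp-upper-bound}, Proposition \ref{vol-upper}, and Proposition \ref{H-very-neg}. I would first use \eqref{estimate-0} as the starting point, together with the uniform boundedness of $e^{2t}(1-\phi^2)$ from \eqref{angle-r-t} and the sharp upper bound $\mu_i \leq 1 + Ce^{-2t}$ from \eqref{sharp-upper-bound}. Writing the unit vector $v$ in a principal frame as $v = \sum c_i e_i$ with $\sum c_i^2 = 1$ yields $(1 - \nabla^2 t(v,v)) = \sum c_i^2 (1-\mu_i) \leq \sum_i (1-\mu_i) + o(1) = (n-1) - H + o(1)$, so that
$$R[\tilde g_r] \leq (n-1)(n-2) + C\bigl((n-1) - H\bigr) + o(1)$$
holds pointwise on $\Sigma_r \setminus C_p$. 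Fix a small $\delta > 0$ and partition $\Sigma_r \setminus (B^r_\epsilon \cup N_p)$ into $G_r = \{H \geq (n-1)(1-\delta)\}$, $M_r = \{-2(n-1) < H < (n-1)(1-\delta)\}$, and $W_r = \{H \leq -2(n-1)\}$.

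On $G_r$ the pointwise bound immediately gives $\int_{G_r} R\,dv[\tilde g_r] \leq \bigl((n-1)(n-2) + C\delta\bigr)\text{Vol}(\Sigma_r, \tilde g_r) + o(1)$. On $M_r \subset U^\delta_r$, combining $H > -2(n-1)$ with $\mu_M \leq 1 + o(1)$ forces all $\mu_i$ to lie in a uniform compact interval, so $R[\tilde g_r]$ is pointwise bounded; Proposition \ref{vol-upper} then gives $\text{Vol}(M_r, \tilde g_r) \to 0$, so the contribution is $o_r(1)$. On $W_r$ the scalar curvature may be unbounded, but we control $\int_{W_r}((n-1) - H)dv[\tilde g_r]$ by estimating $\int_{W_r}|H|dv[\tilde g_r]$ directly. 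Using the volume form identity \eqref{volume-element} together with the comparison \eqref{com-vol-element}, parameterize $\Sigma_r \setminus C_p$ by the exponential map at $p$ to write
$$\int_{W_r}|H|dv[\tilde g_r] \approx 2^{n-1}\int_{W_r}|H\mathcal{J}|\,e^{-(n-1)t}\,\omega_1 \wedge \cdots \wedge \omega_{n-1}.$$
By Proposition \ref{H-very-neg}, $|H\mathcal{J}(\gamma(t))| \leq C e^{(n-1)(1-\delta/(4(n-1)))t}$ on $W_r$, and since $u = r - t$ is bounded we have $t \geq r - C$ on $\Sigma_r$, so
$$\int_{W_r}|H|dv[\tilde g_r] \leq C e^{-\delta r / 4}\,\text{Vol}(\partial X) \to 0.$$

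Summing the three contributions yields
$$\int_{\Sigma_r \setminus (B^r_\epsilon \cup N_p)} R\,dv[\tilde g_r] \leq (n-1)(n-2)\,\text{Vol}(\Sigma_r, \tilde g_r) + C\delta\,\text{Vol}(\Sigma_r, \tilde g_r) + o_r(1),$$
and this bound is independent of $\epsilon$ because $B^r_\epsilon \cup N_p$ has arbitrarily small $\tilde g_r$-measure. Since $\text{Vol}(\Sigma_r, \tilde g_r)$ is uniformly bounded (it converges to $\text{Vol}(\partial X, \tilde g_0)$), a diagonal argument choosing $\delta = \delta(r) \to 0$ slowly enough that the error estimates in Propositions \ref{vol-upper} and \ref{H-very-neg} still supply $o_r(1)$ absorbs the $C\delta$ term into $o_r(1)$, which finishes the proof. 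The principal obstacle is the transition from Proposition \ref{H-very-neg}, whose statement is along $g^+$-geodesics from $p$ parameterized by the distance $t$, to an integral over $\Sigma_r$, which is a level set of $r$ rather than of $t$; this is handled by identifying $\Sigma_r \setminus C_p$ as a Lipschitz graph over the unit sphere at $p$ via the exponential map and exploiting the almost-alignment $g^+(\nabla r, \nabla t) = 1 + O(e^{-2t})$ from \eqref{angle-r-t}, so that the exponential decay in $t$ dominates the bounded angular contribution.
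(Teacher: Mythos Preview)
Your proposal is correct and follows essentially the same route as the paper: you derive the same pointwise bound $R[\tilde g_r] \leq (n-1)(n-2) + C((n-1)-H) + o(1)$ (the paper phrases this as $Ddt(v,v)\geq \mu_m \geq H-C$, which is equivalent via \eqref{sharp-upper-bound}), perform the same three-region decomposition according to the value of $H$, and invoke Proposition~\ref{vol-upper} and Proposition~\ref{H-very-neg} exactly as the paper does. The only cosmetic difference is that you make the diagonal choice $\delta=\delta(r)\to 0$ explicit, whereas the paper simply says ``arbitrary choice of $\delta>0$''; also, in your $W_r$ estimate the bounding constant should be the volume of the unit sphere $\omega_{n-1}$ (the angular domain of the exponential map at $p$) rather than $\text{Vol}(\partial X)$, but this does not affect the argument.
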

\begin{proof}
At any point $q\in \Sigma_r\setminus C_p$, by $(\ref{com-vol-element})$,
\begin{align*}
 |\frac {Hdv[g^+](\nabla r)} {Hdv[g^+](\nabla t)|_{(\nabla r)^\perp}} - 1| \leq Ce^{-2t}.
\end{align*}
Since $|t-r|$ is uniformly controlled, for $q\in \Sigma_r\setminus C_p$ so that $H\leq -2(n-1)$ we get the decay of $H\,dv[g^+](\nabla r)$ by Proposition \ref{H-very-neg}.
\\

For any $\delta>0$, for $r>0$ large as in the statement of the theorem, considering $\Sigma_r$ as a graph of $\Gamma_{t_{\delta}}$ induced by the exponential map at $p$, we have the splitting of the integration,
\begin{align*}
&\int_{\Sigma_r\setminus(B_{\epsilon}^r\bigcup N_p)} (R dV)[\tilde{g}_r]\\
&=\int_{\{q\in \Sigma_r\setminus(B_{\epsilon}^r\bigcup N_p):\,H(q)>(n-1)(1-\delta)\}} (R dV)[\tilde{g}_r]+\int_{\{q\in \Sigma_r\setminus(B_{\epsilon}^r\bigcup N_p): \,-2(n-1)<H\leq (n-1)(1-\delta)\}} (R dV)[\tilde{g}_r]\\
&+\int_{\{q \in\Sigma_r\setminus(B_{\epsilon}^r\bigcup N_p):\,H\leq -2(n-1)\}} (R dV)[\tilde{g}_r].
\end{align*}
Note that $Ddt(v, v)\geq \mu_m \geq H-C$. By Proposition 6.5 and Proposition 6.7 combining with $(6.11)$ and $(6.12)$ and arbitrary choice of $\delta>0$, we completes the proof of $(6.23)$.
\end{proof}


 \end{document}